\def\smallskip{\vskip\smallskipamount}
\def\medskip{\vskip\medskipamount}
\def\bigskip{\vskip\bigskipamount}
\newtheoremstyle{thmstyle}{}{}{\itshape}{}{\bfseries}{ }{5pt}{}
\newtheoremstyle{exstyle}{}{}{}{}{\bfseries}{ }{5pt}{}
\newtheoremstyle{defstyle}{}{}{}{}{\bfseries}{ }{5pt}{}
\newtheoremstyle{remstyle}{}{}{}{}{\bfseries}{ }{5pt}{}
\theoremstyle{thmstyle}
\newtheorem{thm}{Theorem}[section]
\newtheorem{theorem}[thm]{Theorem}
\newtheorem{lemma}[thm]{Lemma}
\newtheorem{proposition}[thm]{Proposition}
\newtheorem{cor}{Corollary}[section]
\newtheorem{corollary}[thm]{Corollary}
\theoremstyle{exstyle}
\newtheorem{example}[thm]{Example}
\theoremstyle{defstyle}
\newtheorem{definition}[thm]{Definition}
\newtheorem{def-prop}[thm]{Definition-Proposition}
\newtheorem{def-lem}[thm]{Definition-Lemma}
\newtheorem{rem-convention}[thm]{Remark-Convention}
\newtheorem{def-note}[thm]{Definition-Notation}
\theoremstyle{remstyle}
\newtheorem{remark}[thm]{Remark}
\theoremstyle{remstyle}
\newcommand{\Ext}{\operatorname{Ext}}
\newcommand{\Z}{\mathbb{Z}}
\newcommand{\R}{\mathbb{R}}
\newcommand{\K}{\mathbb{K}}
\newcommand{\Hom}{\rm Hom}
\DeclareMathOperator*{\Ker}{Ker}
\DeclareMathOperator*{\rep}{rep}
\newcommand{\doublewidetilde}[1]{{%
  \mathpalette\double@widetilde{#1}%
}}
\newcommand{\double@widetilde}[2]{%
  \sbox\z@{$\m@th#1\widetilde{#2}$}%
  \ht\z@=.9\ht\z@
  \widetilde{\box\z@}%
}
\begin{document}

\title[Free products of semisimple algebras via quivers]{Representations of free products of semisimple algebras via quivers}
\author[A. Buchanan, I. Dimitrov, O. Grace, C. Paquette, D. Wehlau, T. Xu]{Andrew Buchanan, Ivan Dimitrov, Olivia Grace, Charles Paquette, David Wehlau, Tianyuan Xu} 

\address{Department of Mathematics and Statistics, Queen's 
University, Kingston ON, Canada}
\email{16maib@queensu.ca}
\address{Department of Mathematics and Statistics, Queen's 
University, Kingston ON, Canada}
\email{dimitrov@queensu.ca}
\address{Department of Mathematics and Statistics, Queen's 
University, Kingston ON, Canada}
\email{17ofg@queensu.ca}
\address{Department of Mathematics and Computer Science, Royal Military College of Canada, Kingston ON, Canada}
\email{charles.paquette.math@gmail.com}
\address{Department of Mathematics and Computer Science, Royal Military College of Canada, Kingston ON, Canada}
\email{david.wehlau@gmail.com}
\address{Department of Mathematics, University of Colorado Boulder, Boulder CO 80309, USA}
\email{tianyuan.xu@colorado.edu}

\subjclass{Primary 16G20, 16D60; Secondary 16S10, 16E60, 16G60, 20E06}

\keywords{quiver representation, subspace quiver, semisimple module, stable representation, representation type, free product of algebras}

\maketitle

\begin{abstract}
    Let $\K$ denote an algebraically closed field and $A$ a free product of finitely many semisimple associative $\K$-algebras. We associate to $A$ a finite acyclic quiver $\Gamma$ and show that the category of finite dimensional $A$-modules is equivalent to a full subcategory of the category ${\rm rep}(\Gamma)$ of finite dimensional representations of $\Gamma$. Under this equivalence, the simple $A$-modules correspond exactly to the $\theta$-stable representations of $\Gamma$ for some stability parameter $\theta$. This gives us necessary conditions for an $A$-module to be simple, conditions which are also sufficient if the module is in general position. Even though there are indecomposable modules that are not simple, we prove that a module in general position is always semisimple. We also discuss the construction of arbitrary finite dimensional modules using nilpotent representations of quivers. Finally, we apply our results to the case of a free product of finite groups when $\K$ has characteristic zero. 
    
\end{abstract}
\tableofcontents

\section{Introduction}

Let $\K$ be an algebraically closed field, let $m$ be a positive integer, and let $A_1,A_2,..., A_m$ be finite dimensional semisimple (associative, unital) $\K$-algebras. 
In this paper, we are interested in studying the (finite dimensional) representation theory of algebras of the form 
$$A:= A_1 * A_2 * \cdots * A_m,$$
where the symbol $*$ stands for free product. 
Our motivation comes from the special case where 
$A_i$ is the group algebra $\K G_i$ of an arbitrary finite cyclic group $G_i$ for each $i$. More specifically, in 
\cite{DPWX} a subset of the authors studied the representation theory of subregular $J$-rings of Coxeter systems. These are certain subrings of Lusztig's asymptotic Hecke algebras and properly include,
up to Morita equivalence, all algebras of the form $A=\K G_1* \K G_2* \cdots \K G_m$. 

In the above special case, the algebra $A=\K G_1* \K G_2* \cdots \K G_m$ is
naturally isomorphic to the group algebra $\K G$ of the group $G=G_1 * G_1 * \cdots G_k$ of the free product of the $G_i$. Interesting examples of groups of that form include the infinite dihedral group $\mathbb{Z}_2 * \mathbb{Z}_2$ and the projective modular group ${\rm PSL}_2(\mathbb{Z}) \cong \mathbb{Z}_2 * \mathbb{Z}_3$. We note that for the case $m=2$, the representation theory of the group algebra $A=\K G=\K(G_1*G_2)$ (where $G_1$ and $G_2$ are cyclic) has 
 been studied by Adriaenssens--Le Bruyn \cite{AL} and Sletsjoe \cite{Slet}. However, to our knowledge, not much is known about the representation theory of $A$ in our more general setting where the number $m$ can be any positive integer and each $A_i$ can be any semisimple algebra.

\medskip

In the $\K$-linear category of associative unital $\K$-algebras, the free product $A=A_1 * A_2 * \cdots * A_m$ coincides with the coproduct of the family of algebras $\{A_1, \ldots, A_m\}$.  The free product does not depend on the order of the factors, and for an algebra $B$, we have that $B*\K \cong B$. Therefore, an $A_i$ that is one dimensional will be called a \emph{trivial factor}. For the purpose of studying the modules over $A$, we will therefore assume that there is no trivial factor in $A$ (and in particular, $A$ is not one dimensional), unless otherwise stated. By the universal property of the coproduct, if $V$ is a finite dimensional $\K$-vector space, then the data of an algebra homomorphism $A \to {\rm End}_\K(V)$ is equivalent to the data of algebra homomorphisms $A_i \to {\rm End}_\K(V)$ for all $i$. Giving an $A$-module structure on $V$ is therefore equivalent to simultaneously giving an $A_i$-module structure on $V$ for all $i$. If each $A_i$ is semisimple, then for each $i$, we can decompose $V$ as a finite direct sum of simple $A_i$-modules.
If we are given these decompositions for each $i$, and how each direct summand of each decomposition embeds into $V$, then we can recover the $A$-module structure on $V$. We make this observation explicit by showing that there is an equivalence between the category $\rep(A)$ of finite dimensional left $A$-modules and a subcategory $\mathcal{C}$ of representations of a suitable finite acyclic quiver $\Gamma = \Gamma_A$ that we call a \emph{generalized subspace quiver}. We will define a functor $F:\rep(A)\to\mathcal{C}$ in Section \ref{sec:FandG} to establish this equivalence.

\medskip

There is a stability parameter $\theta$ on the quiver $\Gamma$ such that all representations in the subcategory $\mathcal{C}$ are $\theta$-semistable. Moreover, the simple $A$-modules correspond precisely to the $\theta$-stable representations in $\mathcal{C}$ via the functor $F$. We make use of the well-developed theory of $\theta$-stable decompositions for acyclic quivers, due to Derksen and Weyman \cite{DW}, to determine all $\theta$-stable dimension vectors. Under the correspondence mentioned above, consideration of the $\theta$-stable dimension vectors leads to our main theorem on simple $A$-modules:
given a finite dimensional $A$-module $M$ and the decomposition of $M$ as a direct sum of simple $A_i$-modules for each $i$, our main theorem gives a convenient necessary criterion to decide if $M$ is simple as an $A$-module. The criterion is numerical, and it also turns out to be sufficient whenever $M$ is in general position, that is, whenever $M$ lies on some open and non-empty set of some irreducible variety which parameterizes the modules of a given dimension; see Theorem \ref{mainThm}.

\medskip

Using representations of $\Gamma$, we also prove the somewhat surprising result that a left $A$-module in general position is completely reducible. As a consequence, we deduce that the dimension vectors of the simple $A$-modules are exactly the Schur roots of the quiver $\Gamma$ satisfying $m$ linear equalities; see Corollary \ref{coro:schur}. 

\medskip

We note that although modules in general positions are semisimple, there always exist indecomposable modules that are not simple, when the free product has more than one (non-trivial) factors. 
With the only exception of the group algebra of the infinite dihedral group, all categories $\rep(A)$ are of (strictly) wild representation type. Moreover, nilpotent representations of quivers can be used to construct all indecomposable finite dimensional $A$-modules.

\medskip

Then, we will use moduli spaces of quivers to compute the number of parameters needed to parameterize all of the simple $A$-modules. This yields an explicit formula in the case where every $A_i$ is basic.

\medskip

Finally, we will apply our result in the case of a free product of finite cyclic groups, in which case our numerical criterion for simplicity of $A$-modules can be formulated in terms of the dimensions of the eigenspaces of the $A$-module $M$ seen as an $A_i$-module for $i=1,2,\ldots, m$. We note that in the particular case of a free product of two finite cyclic groups, our theorem (Theorem \ref{MainTheoCyclic}) has also been obtained by Adriaenssens and Le Bruyn in \cite{AL}. 

\medskip

Generalized subspace quivers and similar stability parameters, with their stable representations, have recently been studied in \cite{CJ} to analyse some problems coming from frame theory in analysis. We hope that our results can also lead to some interesting consequences in analysis.

\subsection*{Acknowledgement} The second, fourth and fifth named authors are supported by the
National Sciences and Engineering Research Council of Canada. The fourth and fifth named authors are also supported by the Canadian Defence
Academy Research Programme. We thank Kaveh Mousavand for helpful discussions.

\section{Some quiver representation theory}

We let $m$ be a positive integer and let $A=A_1* A_2* \ldots* A_m$ be a free product of $m$ finite dimensional (associative and unital) semisimple $\K$-algebras $A_1, A_2, \ldots, A_m$. Since $\K$ is algebraically closed, each semisimple algebra $A_i$ is isomorphic to a finite product of matrix algebras over $\K$ by the Wedderburn-Artin Theorem. Hence, we may assume that
$$A_i = A_{i1} \times \cdots \times A_{ir_i}$$
for some positive integer $r_i$ where each $A_{ij}$ is a matrix algebra over $\K$. We let $w_{ij}$ denote the dimension of the unique simple $A_{ij}$-module. We will call $w_{ij}$ a \emph{weight} of $A$ or of $A_i$ and say that it is \emph{trivial} when it is one. Equivalently, we assume that $A_{ij}$ is the matrix algebra ${\rm Mat}_{w_{ij}}(\K)$ of $w_{ij} \times w_{ij}$ matrices over $\K$. For each $i,j$, and $1 \le p,q \le w_{ij}$, we let $e_{ij}^{pq}$ denote the matrix in $A_{ij}$ having zeros everywhere except in row $p$ and column $q$, where the entry is 1. We let $E_{ij}$ denote the identity of $A_{ij}$, so that
  $$E_{ij}=\sum_{k=1}^{w_{ij}}e_{ij}^{kk}.$$ 
  
  Let $S$ be any left $A_{ij}$-module. By basic linear algebra, we have $e_{ij}^{k1}S = e_{ij}^{kk}S$ for all $1 \le k \le w_{ij}$, and there is a $\K$-vector space decomposition
  \begin{equation}
      \label{eq:decomp}
      S=\bigoplus_{k=1}^{w_{ij}}e_{ij}^{kk}S=\bigoplus_{k=1}^{w_{ij}}e_{ij}^{k1}S=\bigoplus_{k=1}^{w_{ij}}e_{ij}^{k1}e_{ij}^{11}S.
  \end{equation}
  It further follows that if 
 $\{b_1, \ldots, b_s\}$ is a basis of $e_{ij}^{11}S$, then the set $\{e_{ij}^{k1}b_l \mid 1 \le k \le w_{ij},\, 1 \le l \le s\}$ forms a basis for $S$. 

\medskip

\subsection{The quiver $\Gamma_A$} Let $A=A_1* \ldots* A_m$ be as before. We associate a quiver 
$\Gamma= \Gamma_A = (\Gamma_0, \Gamma_1)$ to $A$ as follows: the quiver $\Gamma$ has a unique sink vertex denoted $v_0$ and one source vertex $v_{ij}$ for each $i,j$ with $1 \le i \le m$ and $1 \le j \le r_i$. We put $w_{ij}$ arrows from $v_{ij}$ to $v_0$ for all $i,j$. The collection $\{v_{i1}, \cdots, v_{i r_i} \}$ of vertices is called the \emph{$i$-th cluster} of $\Gamma$. The arrows starting at $v_{ij}$ are labelled $\alpha_{ij}^k$ where $1 \le k \le w_{ij}$. We refer to $\Gamma_A$ as a \emph{generalized subspace quiver} of $A$, since when $w_{ij}=1$ for all $i,j$ we recover a classical subspace quiver.
\medskip

Recall that a \emph{representation} $M = (\{M(v)\}_{v \in \Gamma_0},\{M(\alpha)\}_{\alpha \in \Gamma_1})$ of any quiver $\Gamma$ is a collection $\{M(v)\}_{v \in \Gamma_0}$ of $\K$-vector spaces together with a collection $\{M(\alpha)\}_{\alpha \in \Gamma_1}$ of $\K$-linear maps consisting of one linear map $M(\alpha): M(u) \to M(v)$ for each arrow $\alpha: u\to v$ in $\Gamma$. We say $M$ is \emph{finite dimensional} when all the vector spaces $M(v)$ are finite dimensional, and in this case we define the \emph{dimension vector} of $M$ to be the vector $d=(d_v)_{v\in \Gamma_0}$ where $d_v=\dim_\K M(v)$ for each $v\in  \Gamma_0$; we also denote $d$ by $\dim(M)$. We will also refer to any element of $\mathbb{Z}_{\ge 0}^{\Gamma_0}$ as being a dimension vector, since it is the dimension vector of at least one representation.

\medskip

The finite dimensional representations of $\Gamma$ form a category where a {morphism} $f: M \to N$ for two representations of $\Gamma$ is a collection $\{f_v\}_{v \in \Gamma_0}$ of $\K$-linear maps such that $N(\alpha)f_u = f_vM(\alpha)$ for each arrow $\alpha: u \to v$ in $\Gamma$. We denote this category by $\rep(\Gamma)$. It is a $\K$-linear Hom-finite abelian category.

\begin{remark}[Notation] 
\label{rmk:notation}
 To simplify notation, for each representation $M$ of $\Gamma_A$ and its dimension vector $d$, we will often denote $M(v_0), M(v_{ij}), d_{v_0}$ and  $d_{v_{ij}}$ by $M_0, M_{ij}, d_0$ and $d_{ij}$, respectively. Similarly, for each morphism $f=(f_v)_{v\in \Gamma_0}:M\to N$ in $\rep(\Gamma_A)$, we will often denote $f_{v_0}$ by $f_0$ and $f_{v_{ij}}$ by $f_{ij}$ for all $i,j$. 
 \end{remark}

\begin{example}
Consider the algebra $A = (\K^2 \times {\rm Mat}_2(\K))*(\K^3)$.
Here $m=2$, $r_1=r_2=3$, 
$w_{11}=w_{12}=w_{21}=w_{22}=w_{23}=1$ and $w_{13}=2$.
The quiver $\Gamma$ is as follows,
$$\xymatrix{&&& 0 &&& \\ v_{11} \ar[urrr] & v_{12} \ar[urr] & v_{13} \ar@/_/[ur] \ar[ur] & & v_{21} \ar[ul] & v_{22} \ar[ull] & v_{23} \ar[ulll]}$$
and $\Gamma$ has $m=2$ clusters, namely $\{v_{11},v_{12}, v_{13}\}$ and $\{v_{21},v_{22}, v_{23}\}$. 
\end{example}

\medskip

\subsection{The functor $F$} 
\label{sec:FandG}
Let $\rep(A)$ denote the $\K$-linear category of finite dimensional left $A$-modules (or representations of $A$). We now define a $\K$-linear covariant functor $F: \rep(A) \to \rep(\Gamma)$. 

\medskip

Let $M$ be a finite dimensional $A$-module. To associate a representation $F(M)$ of $\Gamma$ to $M$, we first let $F(M)_0=M$ and $F(M)_{{ij}} =e_{ij}^{11}M$ (see Remark \ref{rmk:notation}). For each arrow $\alpha_{ij}^k:v_{ij}\to v_0$, we define  the map $F(M)(\alpha_{ij}^k):e_{ij}^{11}M\to M$ to be the composition $\iota_{ij}^k\circ \phi_{ij}^k$ where $\phi_{ij}^k$ is the vector space isomorphism $e_{ij}^{11}M \to e_{ij}^{k1}M=e_{ij}^{kk}M$ given by left multiplication by $e_{ij}^{k1}$ and $\iota_{ij}^k$ is the natural embedding of $e_{ij}^{kk}M$ into $M$. We have defined the representation $F(M)$. 

For each homomorphism $f: M \to N$ of $A$-modules, we define a morphism $F(f): F(M)\to F(N)$ in $\rep(\Gamma)$ as follows. We first let $F(f)_0=f$. For each $i,j$, we have $f(e_{ij}^{11}x) = e_{ij}^{11}f(x) \in e_{ij}^{11}N$ for all $x\in M$, therefore $f$ restricts to map from $e_{ij}^{11}M=F(M)_{ij}$ to $e_{ij}^{11}N=F(N)_{ij}$. We define  $F(f)_{ij}$ to be this restriction. Using the fact that $f$ is a module homomorphism, it is straightforward to check that for each $i,j,k$, we have that $F(N)(\alpha_{ij}^k)F(f)_{ij} = F(f)_{0}F(M)(\alpha_{ij}^k)$, therefore $F(f)$ is indeed a morphism in $\rep(\Gamma)$. 

Finally, it is not hard to check that we have defined a covariant $\K$-linear functor $F: \rep(A) \to \rep(\Gamma)$. We note that $F$ maps $\rep(A)$ to a proper subcategory of $\rep(\Gamma)$. In particular, Equation \eqref{eq:decomp} implies that the dimension vector of $F(M)$ is balanced in the following sense for every $A$-module $M$:
\begin{definition}
\label{def:balanced}
We say that a dimension vector $d\in (\Z_{\ge 0})^{\Gamma_0}$ is \emph{balanced} if $\sum_{j=1}^{r_i} w_{ij}d_{{ij}}=d_{0}$ for every $1\le i\le m$.  
\end{definition}

To describe the image of the functor $F$ more precisely, we associate to each representation $X\in \rep(\Gamma)$ and each cluster $i$ in $\Gamma$ the map
\begin{equation}
    \label{eq:Psi}
\Psi(X,i):=\bigoplus_{j=1}^{r_i}\bigoplus_{k=1}^{w_{ij}} X(\alpha_{ij}^k): \left(\bigoplus_{j=1}^{r_i}\bigoplus_{k=1}^{w_{ij}} X_{ij}\right) \longrightarrow X_0.
\end{equation}

\begin{definition} 
A representation $X$ of $\Gamma$ for which all maps $\Psi(X,i)$, for $1 \le i \le m$, are invertible will be called a \emph{balanced representation}.
\end{definition}

 Let 
 $\mathcal{C}$ be the full subcategory of $\rep(\Gamma)$ consisting of the balanced representations. The following is immediate by dimension considerations: 
 
 \begin{lemma}
 \label{lemm:C-balanced}
The dimension vector of every balanced representation is balanced.
 \end{lemma}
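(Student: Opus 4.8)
The plan is to argue purely by counting dimensions, using the invertibility of the maps $\Psi(X,i)$ together with the obvious additivity of dimension over direct sums. First I would fix a balanced representation $X$ of $\Gamma_A$ and let $d=\dim(X)$, so that $d_{ij}=\dim_\K X_{ij}$ and $d_0=\dim_\K X_0$ in the notation of Remark \ref{rmk:notation}. For each cluster index $i$ with $1\le i\le m$, the map $\Psi(X,i)$ of \eqref{eq:Psi} is by hypothesis an isomorphism between the vector spaces $\bigoplus_{j=1}^{r_i}\bigoplus_{k=1}^{w_{ij}} X_{ij}$ and $X_0$; in particular these two spaces have the same dimension.

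Next I would simply compute those two dimensions. The domain has dimension $\sum_{j=1}^{r_i}\sum_{k=1}^{w_{ij}}\dim_\K X_{ij}=\sum_{j=1}^{r_i} w_{ij} d_{ij}$, since for fixed $i,j$ the inner direct sum has exactly $w_{ij}$ summands each of dimension $d_{ij}$. The codomain has dimension $d_0$. Equating the two gives $\sum_{j=1}^{r_i} w_{ij} d_{ij}=d_0$ for every $1\le i\le m$, which is precisely the condition in Definition \ref{def:balanced} for $d$ to be a balanced dimension vector. Since $i$ was arbitrary, $\dim(X)$ is balanced, and this holds for every object $X$ of $\mathcal{C}$, proving the lemma.

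There is essentially no obstacle here: the statement really is ``immediate by dimension considerations'' as the text already notes, and the only thing to be careful about is the bookkeeping in the index ranges — that the double direct sum over $j$ and $k$ contributes the factor $w_{ij}$ in front of $d_{ij}$ rather than, say, $d_{ij}$ appearing only once. One could phrase the whole argument in a single line, but I would spell out the dimension count of the domain of $\Psi(X,i)$ explicitly so that the appearance of the weights $w_{ij}$ is transparent and the match with Definition \ref{def:balanced} is manifest.
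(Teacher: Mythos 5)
Your proof is correct and is exactly the argument the paper has in mind when it says the lemma is ``immediate by dimension considerations'': equate the dimensions of the domain and codomain of the isomorphism $\Psi(X,i)$ for each cluster $i$ to get $\sum_{j=1}^{r_i} w_{ij} d_{ij}=d_0$. Nothing is missing.
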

 
 However, it is worth noting that it is not true in general that a representation with a balanced dimension vector is balanced. Now, we have the following categorical equivalence:

\begin{proposition} \label{PropFunctorF}
The functor $F$ induces an equivalence $F: \rep(A) \to \mathcal{C}$. In particular, the category $\mathcal{C}$ is abelian.
\end{proposition}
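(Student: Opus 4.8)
The plan is to construct a quasi-inverse functor $G:\mathcal{C}\to\rep(A)$ and check that $F$ and $G$ are mutually inverse up to natural isomorphism; this simultaneously shows $F$ lands in $\mathcal{C}$ (already noted via balancedness and Lemma \ref{lemm:C-balanced}, together with the observation that the maps $\Psi(F(M),i)$ are invertible by the basis statement following Equation \eqref{eq:decomp}) and that it is an equivalence onto $\mathcal{C}$. First I would define, for a balanced representation $X$, an $A$-module structure on the vector space $G(X):=X_0$. The key point is that an $A$-module structure on a vector space $V$ is the same as an $A_i$-module structure on $V$ for every $i$ simultaneously, and since $A_i=\prod_j \mathrm{Mat}_{w_{ij}}(\K)$, an $A_i$-module structure on $V$ amounts to a choice, for each $j$, of a subspace $V_{ij}^{(1)}\subseteq V$ together with isomorphisms $V_{ij}^{(1)}\xrightarrow{\sim} V_{ij}^{(k)}$ onto complementary subspaces summing (over $j$ and $k$) to $V$; equivalently, it is exactly the data of an isomorphism $\bigoplus_{j,k} U_{ij}\xrightarrow{\sim} V$ from an external direct sum. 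The balanced representation $X$ provides precisely this: for cluster $i$, the invertible map $\Psi(X,i)\colon\bigoplus_{j,k}X_{ij}\to X_0$ lets us transport the obvious $A_i$-module structure on $\bigoplus_{j,k}X_{ij}$ (where $e_{ij}^{pq}$ sends the $(j,q)$-copy of $X_{ij}$ to the $(j,p)$-copy via the identity and kills everything else) to $X_0$. Doing this for every $i$ equips $X_0$ with an $A$-module structure; set $G(X)$ to be this module. On morphisms, given $g=(g_v)\colon X\to Y$ in $\mathcal{C}$, I would set $G(g)=g_0$ and check it is $A$-linear: the commutativity $Y(\alpha_{ij}^k)g_{ij}=g_0 X(\alpha_{ij}^k)$ for all $k$ says exactly that $g_0$ intertwines the two copies of $\Psi$, hence commutes with the action of every $e_{ij}^{pq}$, hence with all of $A_i$, hence with all of $A$.

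Next I would verify functoriality of $G$ (immediate, since $G(g)=g_0$ and composition of morphisms in $\rep(\Gamma)$ is componentwise) and $\K$-linearity. Then I would exhibit the natural isomorphisms. For $GF\cong \mathrm{id}_{\rep(A)}$: given an $A$-module $M$, we have $F(M)_0=M$ and $F(M)_{ij}=e_{ij}^{11}M$ with $F(M)(\alpha_{ij}^k)$ equal to the embedding $e_{ij}^{kk}M\hookrightarrow M$ precomposed with left multiplication by $e_{ij}^{k1}$; so $\Psi(F(M),i)$ is the map $\bigoplus_{j,k}e_{ij}^{11}M\to M$, $(x_{jk})\mapsto \sum_{j,k}e_{ij}^{k1}x_{jk}$, and the $A$-module structure that $G$ builds on $F(M)_0=M$ from this data is, by construction, the one in which $e_{ij}^{pq}$ acts by the composite $M\to e_{ij}^{11}M\to e_{ij}^{pp}M\hookrightarrow M$ — which is exactly the original action of $e_{ij}^{pq}$ on $M$ by Equation \eqref{eq:decomp}. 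Hence $GF(M)=M$ on the nose, and one checks the identity works on morphisms, giving $GF=\mathrm{id}$ (or at worst a trivial natural isomorphism). For $FG\cong \mathrm{id}_{\mathcal{C}}$: given $X\in\mathcal{C}$, set $N=G(X)$, so $N=X_0$ as a vector space with the transported $A$-action. Then $F(N)_0=X_0=N(X)_0$ and $F(N)_{ij}=e_{ij}^{11}N$. The natural isomorphism $\eta_X\colon F(G(X))\to X$ has $(\eta_X)_0=\mathrm{id}_{X_0}$ and, in degree $ij$, the restriction of $\Psi(X,i)$ to the $(j,1)$-summand, namely the isomorphism $X_{ij}=X(\alpha_{ij}^1\text{-source})\to e_{ij}^{11}X_0$ coming from the definition of the $A$-action: this is invertible because $\Psi(X,i)$ is, and it is natural in $X$ because $G$ acts as identity on the $0$-component and morphisms in $\mathcal{C}$ respect the $\Psi$'s. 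Checking that $\eta_X$ is a morphism in $\rep(\Gamma)$ — i.e. intertwines $F(G(X))(\alpha_{ij}^k)$ with $X(\alpha_{ij}^k)$ — reduces again to the definition of how the $A$-action on $X_0$ was built from $X$. The final sentence, that $\mathcal{C}$ is abelian, is then automatic: $\rep(A)$ is abelian and an equivalence of categories transports the abelian structure.

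The main obstacle I anticipate is purely bookkeeping: making the identification "$A_i$-module structure on $V$ $\leftrightarrow$ invertible map $\bigoplus_{j,k}U_{ij}\to V$ with $U_{ij}$ living in degree $(i,j)$" precise and compatible across the two functors, so that the composites are visibly identities (or canonically isomorphic) rather than merely "isomorphic after unwinding." Concretely, the care needed is in tracking the role of the distinguished idempotents $e_{ij}^{11}$ and $e_{ij}^{k1}$: the functor $F$ uses $e_{ij}^{11}M$ as the source space and left multiplication by $e_{ij}^{k1}$ as the structure maps, and one must confirm that feeding this into $G$ reconstructs multiplication by an arbitrary matrix unit $e_{ij}^{pq}=e_{ij}^{p1}e_{ij}^{11}e_{ij}^{1q}$ correctly — this is where Equation \eqref{eq:decomp}, and the ensuing statement that $\{e_{ij}^{k1}b_l\}$ is a basis of $S$, does the real work. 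There is no genuine difficulty beyond this; once the dictionary is set up, both triangle identities are a matter of inspection, and the invertibility of all the comparison maps is exactly the defining condition of $\mathcal{C}$.
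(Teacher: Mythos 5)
Your proposal is correct and follows essentially the same route as the paper: both construct the quasi-inverse $G$ by setting $G(X)=X_0$ and transporting the $A_i$-module structure through the invertible map $\Psi(X,i)$ (your ``obvious action on $\bigoplus_{j,k}X_{ij}$'' is exactly the paper's formula $e_{ij}^{pq}x = X(\alpha_{ij}^p)(x_{ijq})$), and both take $G(f)=f_0$ on morphisms. You spell out the natural isomorphisms $GF\cong\mathrm{id}$ and $FG\cong\mathrm{id}$ in more detail than the paper (which leaves them as a routine check); aside from a harmless direction mix-up in describing the component $(\eta_X)_{ij}$, nothing is missing.
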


\begin{proof}
  Let $M \in \rep(A)$ and let $X = F(M)$. Recall that $E_{ij}$ denotes the identity of $A_{ij}$. By the definition of $F$, for all $i, j$ the map $\oplus_{k=1}^{w_{ij}} X(\alpha_{ij}^k)$ is nothing but the inclusion of $$\sum_{k=1}^{w_{ij}}e_{ij}^{kk}M = E_{ij}M$$ into $M$. It follows that the map $\Psi(X,i)$
  is the inclusion of $M$ into $M$, which is an isomorphism. This proves that $F$ is a functor from $\rep(A)$ into $\mathcal{C}$.
  
  To prove that $F$ is an equivalence, we construct a quasi-inverse $G: \mathcal{C} \to \rep(A)$ to $F$. Let $X \in \mathcal{C}$. To specify an $A$-module $G(X)$, we first let $G(X)=X_0$ as a vector space. To make $X_0$ an $A$-module, it is equivalent to make $X_0$ an $A_i$-module for each $1\le i\le m$ as discussed in the introduction. To do so, we
  specify the action of $e_{ij}^{pq}$ on $X$ for all $1\le j\le r_i$ and all $1\le i,j\le w_{ij}$ as follows. Let $x \in X_0$ and fix $i$. Since $X \in \mathcal{C}$, there are unique elements $x_{ijk} \in X_{ij}$ for all $1\le j\le r_i$ and $1\le k\le w_{ij}$ such that $$x = \sum_{j=1}^{r_i}\sum_{k=1}^{w_{ij}} X(\alpha_{ij}^k)(x_{ijk})$$
  We let $e_{ij}^{pq}x = X(\alpha_{ij}^p)(x_{ijq})$. Note that \begin{equation}
      \label{eq:x}
  e_{ij'}^{p'q'}(e_{ij}^{pq}x) = \delta_{jj'}\delta_{q'p}e_{ij}^{p'q}x=(e_{ij'}^{p'q'}e_{ij}^{pq})x,
  \end{equation}
  therefore $G(X)$ is a left $A_i$-module, as desired.
  
  To define $G$ on an arbitrary morphism $f=(f_v)_{v\in \Gamma_0}: X \to Y$ in $\mathcal{C}$, we simply take $G(f)$ to be the linear map $f_0: X_0\to Y_0$. Let $x\in X_0$ and let $x_{ijk}$ be as in the last paragraph for all $i,j,k$. Then for each $i,j,p,q$, we have 
    \begin{equation}
      \label{eq:f_0(y)}
  f_0(X(\alpha_{ij}^p)(x_{ijq})) = Y(\alpha_{ij}^p)(f_{ij}(x_{ijq}))
  \end{equation}
  since $f$ is a morphism in $\mathcal{C}$. 
  It then follows from Equation \eqref{eq:x} that
  \begin{equation}
      \label{eq:f_0(x)}
      f_0(x) = \sum_{j=1}^{r_i}\sum_{k=1}^{w_{ij}} Y(\alpha_{ij}^k)(f_{ij}(x_{ijk})).
  \end{equation}
  For $1 \le i \le m, 1 \le j \le r_i$ and $1 \le k \le w_{ij}$, Equation \eqref{eq:f_0(y)} implies that 
  $$G(f)(e_{ij}^{pq}x) = f_0(X(\alpha_{ij}^p)(x_{ijq})) = Y(\alpha_{ij}^p)(f_{ij}(x_{ijq}))$$
  and Equation \eqref{eq:f_0(x)} implies that 
  $$e_{ij}^{pq}G(f)(x)=e_{ij}^{pq}f_0(x)=Y(\alpha_{ij}^p)(f_{ij}(x_{ijq})),$$
  therefore $G(f)$ commutes with the action of $e_{ij}^{pq}$, so that $G(f)$ is indeed a morphism of $A$-modules. 
  
  We have specified the functor $G: \rep(\Gamma)\to \rep(A)$, so it remains to check that $G$ is a $\K$-linear functor and $G$ is a quasi-inverse to $F$. This is straightforward to check using the relevant definitions. 
\end{proof}

An abelian category $\mathcal{A}$ is \emph{hereditary} if the bifunctor $\Ext^2(-,-)$ vanishes, or equivalently, the covariant and contravariant functors $\Ext^1(M,-)$ and $\Ext^1(-,M)$ are right exact for every module $M$ in $\mathcal{A}$. The category $\rep(\Gamma)$ is hereditary, which implies that any abelian extension-closed subcategory of it is again hereditary. 

\begin{corollary}
The category $\rep(A)$ is hereditary.
\end{corollary}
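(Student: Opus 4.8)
The plan is to deduce this immediately from the two facts assembled just before the statement: (i) by Proposition~\ref{PropFunctorF}, the functor $F$ gives an equivalence of $\rep(A)$ with the full subcategory $\mathcal{C}\subseteq\rep(\Gamma)$, and $\mathcal{C}$ is abelian; and (ii) $\rep(\Gamma)$ is hereditary, being the category of finite dimensional representations of a finite acyclic quiver. Since $\Ext$-vanishing is a property invariant under equivalence of abelian categories, it suffices to show that the abelian category $\mathcal{C}$ is hereditary.

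First I would verify that $\mathcal{C}$ is an extension-closed subcategory of $\rep(\Gamma)$. Concretely, suppose $0\to X'\to X\to X''\to 0$ is a short exact sequence in $\rep(\Gamma)$ with $X',X''\in\mathcal{C}$; I must check $X\in\mathcal{C}$, i.e.\ that each $\Psi(X,i)$ is invertible. Evaluating the exact sequence at the sink $v_0$ and at each source $v_{ij}$ (exactness in $\rep(\Gamma)$ is vertexwise), and using that the maps $\Psi(-,i)$ are natural in the representation, one gets a commutative diagram with exact rows comparing $\Psi(X',i)$, $\Psi(X,i)$, $\Psi(X'',i)$; since the outer two are isomorphisms, the five lemma (or a direct dimension count together with injectivity/surjectivity chase) forces $\Psi(X,i)$ to be an isomorphism. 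Hence $X\in\mathcal{C}$ and $\mathcal{C}$ is extension-closed. Note also that the inclusion $\mathcal{C}\hookrightarrow\rep(\Gamma)$ is exact, because $\mathcal{C}$ is a full abelian subcategory whose kernels and cokernels (computed in $\rep(\Gamma)$) again lie in $\mathcal{C}$ by the same argument.

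Next I would invoke the general principle, recalled in the paragraph preceding the corollary, that an abelian extension-closed full subcategory of a hereditary abelian category is hereditary: for $X,Y\in\mathcal{C}$ one has $\Ext^1_{\mathcal{C}}(X,Y)=\Ext^1_{\rep(\Gamma)}(X,Y)$ (Yoneda $\Ext^1$ is computed by extensions, which stay in $\mathcal{C}$), and then $\Ext^2_{\mathcal{C}}(X,Y)$, classifying two-fold extensions, vanishes because every such extension can be spliced and the middle term replaced using that $\Ext^2_{\rep(\Gamma)}=0$. Therefore $\mathcal{C}$ is hereditary, and transporting along the equivalence $F$ shows $\rep(A)$ is hereditary.

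The only genuinely substantive point is the extension-closedness of $\mathcal{C}$, which I expect to be routine via the diagram chase above; everything else is either a citation of the displayed general fact about hereditary subcategories or transport of structure along the established equivalence. No delicate estimate or construction is needed.
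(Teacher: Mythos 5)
Your proposal is correct and follows essentially the same route as the paper: both reduce to showing $\mathcal{C}$ is extension-closed and then invoke the fact, stated just before the corollary, that an abelian extension-closed subcategory of the hereditary category $\rep(\Gamma)$ is hereditary. The only cosmetic difference is that you verify invertibility of $\Psi(M,i)$ via naturality and the five lemma, whereas the paper does it by a block-triangular matrix argument; both are valid.
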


\begin{proof}
 It suffices to check that $\mathcal{C}$ is closed under extensions. Let
$$0 \to L \to M \to N \to 0$$
be a short exact sequence with $L,N \in \mathcal{C}$. For each vertex $x$ in $\Gamma_0$, one can choose a suitable basis for $M_x = L_x \oplus N_x$ in such a way that $M(\alpha_{ij}^k)$ is given by a block diagonal matrix where the diagonal blocks are $L(\alpha_{ij}^k)$ and $ N(\alpha_{ij}^k)$ for all $i,j,k$. Since $L,N \in \mathcal{C}$, the maps $\Psi(L,i)$ and $\Psi(N,i)$ are invertible for all $i$. It follows that $\Psi(M,i)$ is invertible for all $i$ as well, therefore $M$ is in $\mathcal{C}$.
\end{proof}

\subsection{Varieties and general linear groups}Let $d\in (\Z_{\ge 0})^{\Gamma_0}$. We consider the affine space $\rep(\Gamma, d)$  given by
$$\rep(\Gamma, d) = \bigoplus_{\alpha:u \to v}{\rm Mat}_{d_v \times d_u}(\K)$$
where the direct sum is indexed by all arrows of $\Gamma$ and ${\rm Mat}_{d_v \times d_u}(\K)$ denotes the space of all $d_v \times d_u$ matrices over $\K$. This affine space parameterizes the representations of $\Gamma$ with dimension vector $d$, with each point $x$ in the space corresponding to the representation $M_x$ where $M_x(v) = \K^{d_v}$ for every vertex $v$ and $M(\alpha)$ is given by the corresponding matrix in ${\rm Mat}_{d_v \times d_u}(\K)$, in the canonical bases, for every arrow $\alpha: u \to v$. In the sequel, we identify each point  $x$ in $\rep(\Gamma, d)$ with the representation $M_x$. The group $${\rm GL}_d(\K) = \prod_{v \in \Gamma_0}{\rm GL}_{d_v}(\K)$$
acts on $\rep(\Gamma,d)$ by simultaneous conjugation via the map 
\[
{\rm GL}_d(\K)\times \rep(\Gamma, d)\to \rep(\Gamma, d),\quad (g, M)\mapsto g\cdot M 
\]
where $(g\cdot M)_\alpha=g_v M_\alpha g_u^{-1}$ for every arrow $\alpha:u\to v$ in $\Gamma$. The orbit of a representation under this action is precisely the isomorphism class of that representation in $\rep(\Gamma, d)$.

For each $d\in(\Z_{\ge 0})^{\Gamma_0}$, we let $\mathcal{C}_d$ denote the intersection $\mathcal{C} \cap \rep(\Gamma,d)$. Then $\mathcal{C}_d$ is nonempty only when $d$ is balanced, and when this is the case $\mathcal{C}_d$ is a Zariski-open set of $\rep(\Gamma,d)$. Let $F:\rep(A)\to\mathcal{C}$ and $G:\mathcal{C}\to\rep(A)$ be the quasi-inverse functors defined in \S \ref{sec:FandG}, and let  
$$\rep(A,d): = G(\mathcal{C}_d).$$
Then by Proposition \ref{PropFunctorF}, the representation space $\rep(A,d)$ may be identified with the affine variety $\mathcal{C}_d$ equipped with the action of the algebraic group
$${\rm GL}_d(\K) = \prod_{v \in \Gamma_0}{\rm GL}_{d_v}(\K).$$
Later, we will see how the ${\rm GL}_d(\K)$-variety $\mathcal{C}_d$ can be used to parametrize the simple $A$-modules in $\rep(A,d).$

\section{Stability}

We consider the generalized subspace quiver $\Gamma=\Gamma_A$ as constructed in the previous section. Let $\sigma \in \Hom_{\mathbb{Z}}(\mathbb{Z}^{\Gamma_0}, \mathbb{Z})$, which can also be thought of as a vector in $\mathbb{Z}^{\Gamma_0}$. If $\sigma = (\sigma_i)_{i \in \Gamma_0}$ and $d = (d_i)_{i \in \Gamma_0}$ is another  vector in $\Z^{\Gamma_0}$, we define $\sigma(d)$ to be the canonical dot product
$$\sigma(d) = \sum_{i \in \Gamma_0}\sigma_id_i.$$
We think of $\sigma$ as a \emph{stability parameter} for $\Gamma$. We need the following definition, which is King's interpretation of Mumford's notion of (semi)stability; see \cite{Ki}.

\medskip

\begin{definition}
\label{def:stability}
A representation $M \in\rep(\Gamma)$ is \emph{$\sigma$-semistable} if $\sigma(\dim (M))=0$ and  $\sigma(\dim(M'))\le 0$ for every subrepresentation $M'$ of $M$. If in addition we have $\sigma(\dim(M')) < 0$ for all proper and non-trivial subrepresentations of $M$, then $M$ is \emph{$\sigma$-stable}.
\end{definition}

This leads to the following.

\begin{definition}
\label{def:stable_vec}
   An element $d\in \Z_{\ge 0}^{\Gamma_0}$ is called \emph{$\sigma$-stable} or \emph{$\sigma$-semistable} if there is a $\sigma$-stable or $\sigma$-semistable representation with dimension vector $d$, respectively.
\end{definition}

We remark that if $M$ is $\sigma$-(semi)stable of dimension vector $d$, then there is a non-empty Zariski-open subset in $\rep(\Gamma,d)$ consisting of $\sigma$-(semi)stable representations; see \cite{Ki}.

\medskip

We will be interested in the particular stability parameter $\theta=(\theta_v)_{v\in \Gamma_0}$ where $\theta_0=-m$ and $\theta_{ij}=w_{ij}$ for all $1\le i \le m, 1\le j\le r_i$. The next three propositions discuss the close connections between this parameter and the category $\mathcal{C}$.

\begin{proposition} \label{FunctorImageSemiStable}
Let $X$ be a balanced representation. 
\begin{enumerate}
    \item We have $\theta(\dim(X))=0$.
    \item Let $Y$ be a subrepresentation of $X$ in $\rep(\Gamma)$. Then $\theta(\dim(Y))\le 0$, where equality holds if and only if $Y$ is balanced.
    \item The representation $X$ is $\theta$-semistable.
\end{enumerate}
\end{proposition}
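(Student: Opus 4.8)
The plan is to prove the three items in order, since each feeds into the next. For item (1), I would use the fact that a balanced representation $X$ has a balanced dimension vector (Lemma \ref{lemm:C-balanced}), so $\sum_{j=1}^{r_i} w_{ij}d_{ij} = d_0$ for every $i$. Then I compute directly:
\[
\theta(\dim(X)) = \theta_0 d_0 + \sum_{i=1}^m \sum_{j=1}^{r_i} \theta_{ij} d_{ij} = -m d_0 + \sum_{i=1}^m \sum_{j=1}^{r_i} w_{ij} d_{ij} = -m d_0 + \sum_{i=1}^m d_0 = 0.
\]

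For item (2), let $Y$ be a subrepresentation of $X$. The key observation is that for each cluster $i$, the map $\Psi(Y,i)$ is the restriction of $\Psi(X,i)$ to $\bigoplus_{j,k} Y_{ij} \subseteq \bigoplus_{j,k} X_{ij}$, and since $Y$ is a subrepresentation, its image lands in $Y_0$. Because $\Psi(X,i)$ is injective (it is invertible, as $X$ is balanced), $\Psi(Y,i)$ is injective as well, giving $\sum_{j=1}^{r_i} w_{ij}(\dim Y)_{ij} \le (\dim Y)_0$ for every $i$. Summing these $m$ inequalities as in the computation for item (1) yields $\theta(\dim(Y)) \le 0$. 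Equality holds if and only if all $m$ of these inequalities are equalities, i.e. if and only if each $\Psi(Y,i)$ is a bijection onto $Y_0$ (injectivity is automatic, and surjectivity onto $Y_0$ is equivalent to the dimension equality), which is precisely the condition that $Y$ is balanced. I should be a little careful here to note that surjectivity of $\Psi(Y,i)$ onto $Y_0$ plus injectivity is exactly invertibility, so the dimension-count equality $\sum_j w_{ij}(\dim Y)_{ij} = (\dim Y)_0$ combined with injectivity forces $\Psi(Y,i)$ invertible.

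Item (3) is then immediate: by item (1), $\theta(\dim(X)) = 0$, and by item (2), $\theta(\dim(Y)) \le 0$ for every subrepresentation $Y$ of $X$, which is exactly the definition of $\theta$-semistability (Definition \ref{def:stability}).

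The only real subtlety — and the step I would flag as the main point to get right — is the claim in item (2) that $\Psi(Y,i)$ is genuinely the restriction of $\Psi(X,i)$ with image contained in $Y_0$; this uses that $Y(\alpha_{ij}^k)$ agrees with $X(\alpha_{ij}^k)$ on $Y_{ij}$ and that subrepresentations are closed under the arrow maps, both of which follow from the definition of subrepresentation in $\rep(\Gamma)$. Once that identification is clean, the injectivity of $\Psi(Y,i)$ and the equality case are purely formal linear algebra.
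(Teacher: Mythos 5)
Your proof is correct and follows essentially the same route as the paper: compute $\theta(\dim(X))$ from the balanced condition, observe that $\Psi(Y,i)$ is an injective restriction of the isomorphism $\Psi(X,i)$ to get the inequality and its equality case, and deduce semistability. The one point you flag as a subtlety --- that $\Psi(Y,i)$ really is the restriction of $\Psi(X,i)$ with image in $Y_0$ --- is handled just as tersely in the paper's own proof, so there is nothing missing.
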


\begin{proof}
Let $d=\dim(X)$ and $h=\dim(Y)$. Recall the definition of the maps $\Psi(X,i)$ and $\Psi(Y,i)$ for each $i$ from Section \ref{sec:FandG}. 
Since $X\in \mathcal{C}$, the map $\Psi(X,i)$ is an isomorphism for each $i$. Taking dimensions of the domain and codomain of $\Psi(X,i)$ then yields that $\sum_{j=1}^{r_i}w_{ij}d_{ij}=d_0$ for all $i$, therefore $$\theta(d) = -md_0 + \sum_{i=1}^m\sum_{j=1}^{r_i}w_{ij}d_{ij}=-mh_0 + mh_0 = 0,$$
proving Part (1). Next, note that $\Psi(Y,i)$ is a restriction of the isomorphism $\Psi(X,i)$ and hence injective for each $i$. Dimension considerations now imply that 
$\sum_{j=1}^{r_i}w_{ij}h_{ij} \le h_0$ for each $i$ and hence
$$\theta(h) = -mh_0 + \sum_{i=1}^m\sum_{j=1}^{r_i}w_{ij}h_{ij} \le -mh_0 + mh_0 = 0.$$ Here, for the equality to hold we must have $\sum_{j=1}^{r_i}w_{ij}h_{ij} = h_0$ for all $i$, which forces $\Psi(Y,i)$ to be an isomorphism for all $i$ and $Y$ to be in $\mathcal{C}$. Part (2) follows. Part (3) is immediate from parts (1) and (2). 
\end{proof}

\begin{example}
While every balanced representation is $\theta$-semistable by Proposition \ref{FunctorImageSemiStable}, the converse is not true. In Figure \ref{ce2}, we consider the quiver of the algebra $\K^3 * \K^3$, which has two clusters of size three with trivial weights. The stability parameter $\theta$ is  $\theta = (-2,1,\dots,1)$, where the first entry corresponds to vertex $v_0$. By an exhaustive check of all its subrepresentations, we can see that the given representation is $\theta$-stable. However, the representation is not balanced since the $3\times 3$ matrix formed by the maps from the right cluster is not invertible. 

\end{example}

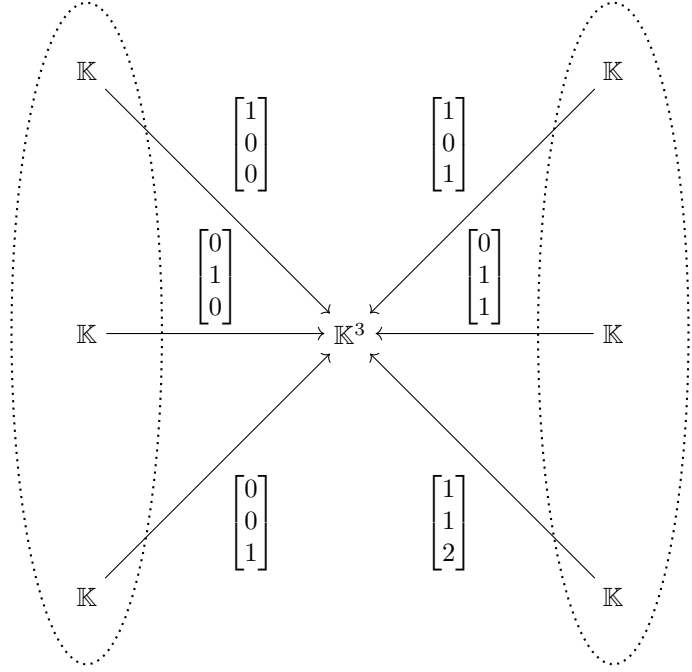
\begin{figure}\label{ce2}
    \centering
\begin{tikzpicture} 
[node distance={35mm}, main/.style = {}] 

\node (1) {$\mathbb{K}^3$};

\node[main] (2) [left of=1] {$\mathbb{K}$}; 
\node[main] (3) [above of=2] {$\mathbb{K}$}; 
\node[main] (4) [below of=2] {$\mathbb{K}$}; 
\node[main] (5) [right of=1] {$\mathbb{K}$}; 
\node[main] (6) [above of=5] {$\mathbb{K}$}; 
\node[main] (7) [below of=5] {$\mathbb{K}$}; 

\draw[->] (2) to [out=0,in=180,looseness=2] node[midway, above]{$\begin{bmatrix}0\\1\\0\end{bmatrix}$} (1);
\draw[->] (3) to [out=-45,in=135,looseness=2]  node[midway, above right]{$\begin{bmatrix}1\\0\\0\end{bmatrix}$} (1);
\draw[->] (4) to [out=45,in=-135,looseness=1.2]  node[midway, below right]{$\begin{bmatrix}0\\0\\1\end{bmatrix}$} (1);

\draw[->] (5) to [out=180,in=0,looseness=2] node[midway, above]{$\begin{bmatrix}0\\1\\1\end{bmatrix}$} (1);
\draw[->] (6) to [out=-135,in=45,looseness=2]  node[midway, above left]{$\begin{bmatrix}1\\0\\1\end{bmatrix}$} (1);
\draw[->] (7) to [out=135,in=-45,looseness=1.2]  node[midway, below left]{$\begin{bmatrix}1\\1\\2\end{bmatrix}$} (1);

\draw[dotted,thick] (3.5,0) ellipse (1 and 4.4);
\draw[dotted,thick] (-3.5,0) ellipse (1 and 4.4);

\end{tikzpicture}

    \caption{A $\theta$-semistable representation that is not balanced.}
\end{figure}

\begin{proposition}\label{Prop: simple to stable} Let $M$ be a representation in $\rep(A)$. Then $F(M)$ is $\theta$-stable if and only if $M$ is simple.
\end{proposition}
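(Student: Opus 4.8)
The plan is to transport the equivalence of Proposition~\ref{PropFunctorF} and the subrepresentation analysis of Proposition~\ref{FunctorImageSemiStable} into a dictionary between $A$-submodules of $M$ and certain subrepresentations of $F(M)$. Write $X = F(M) \in \mathcal{C}$. Part (3) of Proposition~\ref{FunctorImageSemiStable} already tells us $X$ is $\theta$-semistable, so by Definition~\ref{def:stability} the question is precisely whether every proper nontrivial subrepresentation $Y \subsetneq X$ satisfies $\theta(\dim Y) < 0$; by Part (2) of Proposition~\ref{FunctorImageSemiStable}, this fails exactly when some such $Y$ is \emph{balanced}. So it suffices to show: $M$ has a proper nonzero $A$-submodule if and only if $X$ has a proper nonzero balanced subrepresentation.

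First I would establish the forward-style correspondence. If $N \subseteq M$ is an $A$-submodule, then $F$ restricts to give $F(N) \subseteq F(M) = X$ as a subrepresentation (one checks $F(N)_0 = N \subseteq M = X_0$ and $F(N)_{ij} = e_{ij}^{11} N \subseteq e_{ij}^{11} M = X_{ij}$, compatibly with all the maps $X(\alpha_{ij}^k)$), and $F(N) \in \mathcal{C}$ is balanced since $F$ lands in $\mathcal{C}$. Moreover $N \neq 0, M$ forces $F(N) \neq 0, X$ because $F(N)_0 = N$. Conversely, if $Y \subseteq X$ is a balanced subrepresentation, then $Y \in \mathcal{C}$, so $G(Y)$ is defined; the inclusion $Y \hookrightarrow X$ is a morphism in $\mathcal{C}$, hence $G$ sends it to an $A$-module map $G(Y) = Y_0 \to X_0 = M$, which is injective (it is $f_0$ for the inclusion $f$). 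Thus $Y_0$ is (isomorphic to) an $A$-submodule of $M$, proper and nonzero whenever $Y$ is. Here I would lean on the fact that $G$ is a quasi-inverse to $F$ and that the $A$-module structure $G$ puts on $Y_0$ is literally the restriction of the one on $M = X_0$, which is visible from the defining formula $e_{ij}^{pq} x = X(\alpha_{ij}^p)(x_{ijq})$ in the proof of Proposition~\ref{PropFunctorF}: the decomposition witnessing membership in $\mathcal{C}$ for $Y$ is the restriction of the one for $X$, since $Y$ is a subrepresentation, so the two actions agree on $Y_0$.

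Combining the two directions: $M$ is simple iff $M$ has no proper nonzero $A$-submodule iff $X$ has no proper nonzero balanced subrepresentation iff (using Proposition~\ref{FunctorImageSemiStable}(2), so that every proper nonzero subrepresentation $Y$ has $\theta(\dim Y) < 0$) $X$ is $\theta$-stable. One subtlety to address explicitly is the edge case where $M$ has a nonzero submodule $N$ with $F(N) = X$, i.e.\ $N_0 = M$; since $F$ is an equivalence this forces $N = M$, so properness is preserved in both directions, and similarly $Y = 0$ corresponds to $N = 0$.

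I expect the main obstacle to be not any single computation but pinning down the bijection cleanly: one must verify that \emph{every} subrepresentation of $X$ lying in $\mathcal{C}$ arises as $F$ of a genuine $A$-submodule of $M$ (not merely an abstract $A$-module mapping in), which is where one uses that $G$ applied to a monomorphism in $\mathcal{C}$ yields a monomorphism of $A$-modules and that $\mathcal{C}$ is closed under subobjects-that-happen-to-be-balanced within $\rep(\Gamma)$. Everything else is a direct unwinding of the definitions of $F$, $G$, $\Psi(-,i)$, and $\theta$, together with the already-proven Propositions~\ref{PropFunctorF} and~\ref{FunctorImageSemiStable}.
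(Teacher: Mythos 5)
Your proposal is correct and follows essentially the same route as the paper: both directions reduce to the observation that, by Proposition~\ref{FunctorImageSemiStable}(2), a proper nontrivial subrepresentation $Y$ of $F(M)$ with $\theta(\dim Y)=0$ is exactly a balanced one, and then $F$ and its quasi-inverse $G$ from Proposition~\ref{PropFunctorF} carry submodules of $M$ to such $Y$ and back. The paper phrases the two directions contrapositively but the content is identical.
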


\begin{proof}
  Suppose $M$ is not simple. Then $M$ has a proper non-trivial subrepresentation $L$. This yields a proper subrepresentation $F(L)$ of $F(M)$. Since $F(L)$ is $\theta$-semistable by Proposition \ref{FunctorImageSemiStable}, the representation $F(M)$ cannot be $\theta$-stable by Definition \ref{def:stability}. It follows that $F(M)$ is $\theta$-stable only if $M$ is simple.

  Conversely, suppose $X=F(M)$ is not $\theta$-stable. By Proposition \ref{FunctorImageSemiStable}, we know that $X$ is $\theta$-semistable. It follows from Definition \ref{def:stability} that $X$ must have a proper, non-trivial subrepresentation $Y$ that is $\theta$-semistable. In particular, we have $\theta(\dim(Y))=0$, which implies $Y$ lies in $\mathcal{C}$ by Proposition \ref{FunctorImageSemiStable}.(2). Applying the quasi-inverse $G$ of $F$ to the inclusion $Y \subset F(M)$ in $\mathcal{C}$, we see that $M \cong GF(M)$ has a proper submodule isomorphic to $G(Y)$, so $M$ is not simple. We conclude that $F(M)$ is $\theta$-stable if $M$ is simple. 
\end{proof}

\begin{proposition} \label{subrep}
   Let $M \in \mathcal{C}$ and $N$ be a subrepresentation or a quotient of $M$. Then $\theta(\dim(N))=0$ if and only if $N$ is balanced.
\end{proposition}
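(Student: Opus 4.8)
The plan is to split along the two alternatives in the statement and reduce each to Proposition~\ref{FunctorImageSemiStable}. The subrepresentation case is immediate: since $M\in\mathcal{C}$ is balanced, Proposition~\ref{FunctorImageSemiStable}(2) applied with $Y=N$ says exactly that $\theta(\dim(N))\le 0$, with equality if and only if $N$ is balanced. So when $N$ is a subrepresentation, $\theta(\dim(N))=0\iff N$ is balanced, and nothing more is needed.

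Now suppose $N=M/L$ for a subrepresentation $L\subseteq M$, giving a short exact sequence $0\to L\to M\to N\to 0$ in $\rep(\Gamma)$. The first step is numerical. Since $\dim$ is additive on short exact sequences and $\theta(-)$ is linear, $\theta(\dim(N))=\theta(\dim(M))-\theta(\dim(L))$; and $\theta(\dim(M))=0$ by Proposition~\ref{FunctorImageSemiStable}(1) because $M$ is balanced. Hence $\theta(\dim(N))=-\theta(\dim(L))$, which by Proposition~\ref{FunctorImageSemiStable}(2) vanishes precisely when $L$ is balanced. It therefore remains to prove that, for a short exact sequence whose middle term is balanced, $L$ is balanced if and only if $N$ is balanced.

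For this second step I would compare, cluster by cluster, the three maps $\Psi(L,i)$, $\Psi(M,i)$, $\Psi(N,i)$ of \eqref{eq:Psi}. Applying the exact functor $\bigoplus_{j=1}^{r_i}\bigoplus_{k=1}^{w_{ij}}(-)$ to the short exact sequences $0\to L_v\to M_v\to N_v\to 0$ at the source vertices $v=v_{ij}$, and the identity at the sink $v_0$, the maps $\Psi$ assemble into a commutative diagram with exact rows $0\to\bigoplus_{j,k}L_{ij}\to\bigoplus_{j,k}M_{ij}\to\bigoplus_{j,k}N_{ij}\to 0$ lying over $0\to L_0\to M_0\to N_0\to 0$, with verticals $\Psi(L,i),\Psi(M,i),\Psi(N,i)$; the squares commute because each $\Psi$ is built from the arrow maps $X(\alpha_{ij}^k)$, which intertwine the morphisms of the sequence. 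Since $M\in\mathcal{C}$, the middle vertical $\Psi(M,i)$ is an isomorphism, so the snake lemma collapses to a connecting isomorphism $\ker\Psi(N,i)\cong\operatorname{coker}\Psi(L,i)$ (and re-derives $\ker\Psi(L,i)=0=\operatorname{coker}\Psi(N,i)$). As $\Psi(L,i)$ is a restriction of the injective $\Psi(M,i)$ and $\Psi(N,i)$ is a quotient of the surjective $\Psi(M,i)$, this yields the chain of equivalences: $L$ balanced $\iff$ every $\Psi(L,i)$ is also surjective $\iff$ every $\operatorname{coker}\Psi(L,i)=0$ $\iff$ every $\ker\Psi(N,i)=0$ $\iff$ every $\Psi(N,i)$ is also injective $\iff$ $N$ balanced. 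Combined with the numerical step, this settles the quotient case.

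I do not expect a real obstacle. The routine points needing care are the exactness of the rows of the diagram (finite direct sums are exact, and $0\to L_v\to M_v\to N_v\to 0$ is exact at each vertex) and the commutativity of the squares. The one conceptual subtlety worth flagging is that "$N$ is balanced" must be tracked as invertibility of the maps $\Psi(N,i)$, which is strictly stronger than $\dim(N)$ being a balanced dimension vector (Definition~\ref{def:balanced}); this is precisely why the snake-lemma comparison, rather than mere additivity of dimension vectors, is required in the second step.
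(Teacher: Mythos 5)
Your proof is correct, and its skeleton matches the paper's: the subrepresentation case is exactly Proposition \ref{FunctorImageSemiStable}(2), and the quotient case is reduced, via additivity of $\theta\circ\dim$ on the short exact sequence $0\to L\to M\to N\to 0$ together with Proposition \ref{FunctorImageSemiStable}(2) applied to the kernel $L$, to showing that $L$ is balanced if and only if $N$ is. The only divergence is in that last step: the paper disposes of it in one line by citing that $\mathcal{C}$ is abelian (Proposition \ref{PropFunctorF}), so that the cokernel $N$ of the inclusion $L\hookrightarrow M$ of two objects of $\mathcal{C}$ again lies in $\mathcal{C}$, whereas you prove it by hand with the snake lemma applied to the commutative ladder of the maps $\Psi(-,i)$. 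Your version is more self-contained and in fact slightly more careful: the paper's appeal to abelianness tacitly uses that cokernels computed in $\mathcal{C}$ agree with those computed in $\rep(\Gamma)$, which is precisely what your connecting isomorphism $\ker\Psi(N,i)\cong\operatorname{coker}\Psi(L,i)$ establishes. The trade-off is only length; both arguments are sound.
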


\begin{proof} The ``if'' implication follows from Proposition \ref{FunctorImageSemiStable}.(1), and the ``only if'' implication follows from Proposition \ref{FunctorImageSemiStable}.(2) in the case where $N$ is a subrepresentation of $M$, so it remains to prove that whenever $N$ is quotient of $M$ with $\theta(\dim(N))=0$ we must have $N\in \mathcal{C}$. Suppose $N$ is such a quotient. Without loss of generality, we may assume that $N$ is the codomain of a surjective homomorphism $\pi: M\to N$. Consider the kernel $\Ker(\pi)\subseteq M$ of $\pi$ and the short exact sequence $0\to \Ker(\pi)\to M\to N\to 0$. Since $\theta(\dim(M))=\theta(\dim(N))=0$ by assumption, we must have $\theta(\dim(\ker(\pi)))=0$ by linear algebra, therefore $\Ker(\pi)\in \mathcal{C}$ by Proposition \ref{FunctorImageSemiStable}.(2). Since $\mathcal{C}$ is abelian and $N$ is isomorphic to the cokernel of the inclusion of $\Ker(\pi)$ into $M$, it follows that $N \in \mathcal{C}$, as desired. 
\end{proof}

The parameter $\theta$ is also connected to balanced dimension vectors (Definition \ref{def:balanced}): 
\begin{proposition}
\label{prop:balanced-semistable}
Let $d\in \Z_{\ge 0}^{\Gamma_0}$. If $d$
    is balanced, then $d$ is $\theta$-semistable.  
\end{proposition}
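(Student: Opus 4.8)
The plan is to exhibit, for every balanced dimension vector $d$, an explicit representation in $\mathcal{C}_d$ and invoke Proposition \ref{FunctorImageSemiStable}.(3), which says every balanced representation is $\theta$-semistable. Concretely, first I would check that $\mathcal{C}_d$ is non-empty whenever $d$ is balanced: since $d$ balanced means $\sum_{j=1}^{r_i} w_{ij} d_{ij} = d_0$ for each $i$, the domain $\bigoplus_{j}\bigoplus_k X_{ij}$ of the map $\Psi(X,i)$ in \eqref{eq:Psi} has dimension exactly $d_0 = \dim X_0$, so there is at least a chance for $\Psi(X,i)$ to be an isomorphism. I would construct such an $X$ directly: set $X_0 = \K^{d_0}$ and $X_{ij} = \K^{d_{ij}}$, and for each cluster $i$ choose the maps $X(\alpha_{ij}^k)$ so that, stacked together, they form the identity matrix $\K^{d_0}\to\K^{d_0}$ (this is possible precisely because the total source dimension in cluster $i$ equals $d_0$). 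Then $\Psi(X,i)$ is invertible for every $i$, so $X$ is a balanced representation with $\dim(X) = d$; this shows $d$ is $\theta$-semistable by Proposition \ref{FunctorImageSemiStable}.(3).

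Alternatively, and perhaps more cleanly, I would phrase it via the functors: a balanced $d$ admits an $A$-module of that "shape" — take the $A$-module $G(X)$ for the $X$ just constructed, or simply note that once $\mathcal{C}_d \neq \emptyset$ we are done because $\mathcal{C}_d \subseteq \rep(\Gamma,d)$ consists entirely of $\theta$-semistable representations by Proposition \ref{FunctorImageSemiStable}.(3) together with Lemma \ref{lemm:C-balanced}. In fact the cleanest statement of the argument is: $d$ balanced $\Rightarrow \mathcal{C}_d \neq \emptyset$ (by the dimension-count/identity-block construction above) $\Rightarrow$ any representation in $\mathcal{C}_d$ is balanced and hence $\theta$-semistable $\Rightarrow d$ is $\theta$-semistable by Definition \ref{def:stable_vec}.

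I do not anticipate a serious obstacle here; the one point that needs a line of care is the non-emptiness of $\mathcal{C}_d$, i.e.\ verifying that the block-identity choice of arrow maps genuinely makes each $\Psi(X,i)$ an isomorphism. This is pure linear algebra: for fixed $i$, we are partitioning a basis of $\K^{d_0}$ into blocks of sizes $w_{ij}$ (with $d_{ij}$ blocks of "type $j$") and letting the $k$-th arrow from $v_{ij}$ send the $\ell$-th basis vector of $X_{ij}$ to the basis vector of $\K^{d_0}$ sitting in slot $k$ of the $\ell$-th block of type $j$; the balance equation guarantees the block sizes sum to $d_0$, so the resulting map is a bijection on bases. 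Once this is in place the proposition is immediate, so the write-up should be short.
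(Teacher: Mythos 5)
Your proposal is correct and follows essentially the same route as the paper: both construct a representation in $\mathcal{C}_d$ by using the balance condition $\sum_j w_{ij}d_{ij}=d_0$ to choose, for each cluster $i$, an isomorphism from $\bigoplus_j\bigoplus_k X_{ij}$ onto $X_0$ (the paper takes an arbitrary vector space isomorphism where you take the identity in a chosen basis), and then both conclude via Proposition \ref{FunctorImageSemiStable} that any such balanced representation is $\theta$-semistable. No gaps.
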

\begin{proof}
  Let $d\in (\Z_{\ge 0})^{\Gamma_0}$ be balanced.
  By Proposition~\ref{FunctorImageSemiStable}, it suffices to show that there is a representation $M \in \mathcal{C}$ with dimension vector $d$.
  Thus we must find a representation $M$ of dimension $d$ such that for each cluster $i$ the map  
  $$\Psi(M,i) = \bigoplus_{j=1}^{r_i}\bigoplus_{k=1}^{w_{ij}} M(\alpha_{ij}^k): \left(\bigoplus_{j=1}^{r_i}\bigoplus_{k=1}^{w_{ij}} M_{ij}\right) \longrightarrow M_0$$
is an isomorphism.
  
Define $M(v_0) = \K^{d_0}$ and let $M(v_{ij}) = \K^{d_{ij}}$ for each $i=1,2,\dots,m$ and $j=1,,2\dots,r_i$.
Since $d$ is balanced, we have $\sum_{j=1}^{r_i}w_{ij}d_{ij}=d_0$ for each $i$, so we may pick a vector space isomorphism $f_i:\left(\oplus_{r=1}^{r_i}\oplus_{k=1}^{w_{ij}}M_{ij}\right)\to M_0$ for each $i$. Defining $M(\alpha_{ij}^k)$ as the suitable restriction of $f_{i}$ to $M_{ij}$ for all $i,j,k$ yields a representation $M$ with the desired properties.
\end{proof}

\section{Stable dimension vectors}

In order to find the finite dimensional simple $A$-modules, we first need to determine for what balanced vectors $d\in (\Z_{\ge 0})^{\Gamma_0}$ the set $\rep(A,d)$ contains a simple $A$-module. By Proposition \ref{Prop: simple to stable} and Definition \ref{def:stable_vec}, we may do so by finding $\theta$-stable vectors in $(\Z_{\ge 0})^{\Gamma_0}$ for our stability parameter $\theta$.

\subsection{Two rational polyhedral cones}

Given an acyclic quiver $Q$ and a stability parameter $\sigma\in\mathbb{Z}^{Q_0}$, Derksen and Weyman \cite{DW} give a characterization of the $\sigma$-stable dimension vectors of $Q$. We will adapt their result to our quiver $\Gamma$ and to our stability parameter $\sigma = \theta$ defined previously. To this end, we let $\Sigma(\Gamma,\theta)$ denote the set of $\theta$-semistable elements of $(\Z_{\ge 0})^{\Gamma_0}$ and let $\mathbb{Q}_+\Sigma(\Gamma,\theta)$ denote the rational cone generated by these elements in $\R^{\Gamma_0}$. It is well known that this is a rational polyhedral cone. 

We will also be interested in balanced vectors (Definition \ref{def:balanced}). Let $\Sigma_B$ be the set of balanced vectors in $(\Z_{\ge 0})^{\Gamma_0}$ and let $\mathbb{Q}_+\Sigma_B$ denote the corresponding rational cone in $\R^{\Gamma_0}$. 
Since the requirements for a vector to be balanced are given by
a set of linear equations, it is clear that $\mathbb{Q}_+\Sigma_B$ is also a rational polyhedral cone. By Proposition \ref{prop:balanced-semistable}, the set $\Sigma_B$
is a subset of $\Sigma (\Gamma,\theta)$, so $\mathbb{Q}_+\Sigma_B$ is a subcone of $\mathbb{Q}_+\Sigma (\Gamma,\theta)$.
To use the results of \cite{DW}, we need the following notion:

\begin{definition}
   Let $\mathfrak{C}$ be a rational polyhedral cone and let $\alpha \in \mathfrak{C}$. A \emph{minimal conical decomposition for $\alpha$ in $\mathfrak{C}$} is a decomposition $$\alpha = \sum_{i=1}^sa_i\delta_{i}$$ where \begin{enumerate}
        \item The number $a_i$ is positive and rational for every $i$;
        \item The $\delta_{i}$'s are linearly independent vectors all lying on extremal rays.
    \end{enumerate}
\end{definition}

By (the conical version of) Carathéodory's theorem, minimal canonical decompositions always exist in rational polyhedral cones. We give a proof for completeness.

\begin{lemma}[Carathéodory] Let $\mathfrak{C}$ be a rational polyhedral cone and let $\alpha \in \mathfrak{C}$. Then a minimal canonical decomposition exists for $\alpha$. 
\end{lemma}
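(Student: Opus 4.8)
The plan is to prove Carathéodory's theorem for conical combinations by a standard ``reduce the number of generators'' argument. First I would fix a finite set of generators for the rational polyhedral cone $\mathfrak{C}$; since $\mathfrak{C}$ is rational polyhedral, it is generated by finitely many vectors lying on its extremal rays, say $\delta_1,\dots,\delta_N$. Given $\alpha\in\mathfrak{C}$, write $\alpha=\sum_{i=1}^N a_i\delta_i$ with all $a_i\ge 0$ rational, and among all such expressions choose one using the fewest nonzero coefficients; after reindexing we may assume $\alpha=\sum_{i=1}^s a_i\delta_i$ with every $a_i>0$ and $s$ minimal. The two defining properties of a minimal conical decomposition are then: the $a_i$ are positive rationals (clear by construction), and the $\delta_i$ are linearly independent and lie on extremal rays (they lie on extremal rays by choice of the generating set, so the only thing to prove is linear independence).

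The heart of the argument is the linear independence. Suppose for contradiction that $\delta_1,\dots,\delta_s$ are linearly dependent, so there is a nontrivial relation $\sum_{i=1}^s c_i\delta_i=0$ with the $c_i$ rational and not all zero. Since the relation is nontrivial, at least one $c_i$ is nonzero; replacing $(c_i)$ by $(-c_i)$ if necessary we may assume some $c_i>0$. Then for $t\in\R$ consider $\alpha=\sum_{i=1}^s(a_i-tc_i)\delta_i$, valid for every $t$. As $t$ increases from $0$, the coefficient $a_i-tc_i$ of each index with $c_i>0$ decreases, so there is a smallest positive value $t_0=\min\{a_i/c_i : c_i>0\}$ at which some coefficient hits zero, while all coefficients $a_i-t_0c_i$ remain $\ge 0$ (for $c_i\le 0$ the coefficient only grows, and for $c_i>0$ we have $a_i-t_0 c_i\ge 0$ by the definition of $t_0$). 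Since the $a_i$, $c_i$ and hence $t_0$ are rational, the new coefficients are rational as well. This produces a conical decomposition of $\alpha$ in terms of a proper subset of $\{\delta_1,\dots,\delta_s\}$, contradicting the minimality of $s$. Hence the $\delta_i$ are linearly independent, and we have a minimal conical decomposition.

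I do not anticipate a serious obstacle here; the only point requiring a little care is making sure the reduction step stays within the rationals (which it does, since $t_0$ is a ratio of rationals) and that one correctly identifies the sign of the dependence relation so that the chosen $t_0$ is positive and keeps all coefficients nonnegative. One should also note at the outset the degenerate case $\alpha=0$, for which the empty decomposition ($s=0$) works vacuously, or simply observe that $0$ lies on no extremal ray and the statement is trivial there. Everything else is bookkeeping: the existence of a finite generating set on extremal rays for a rational polyhedral cone is exactly the Minkowski--Weyl structure theorem, which may be cited, and the ``choose the sparsest representation'' step is legitimate because the set of admissible coefficient vectors is nonempty and the support sizes form a nonempty set of nonnegative integers with a minimum.
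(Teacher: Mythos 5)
Your proposal is correct and follows essentially the same argument as the paper: take a positive rational conical combination of extremal vectors with the fewest summands, and use a dependence relation with some positive coefficient to subtract off $\mu = \min\{a_i/b_i : b_i>0\}$ times the relation, contradicting minimality. The only differences are cosmetic (you spell out the Minkowski--Weyl generating set, the rationality of $t_0$, and the degenerate case $\alpha=0$, which the paper leaves implicit).
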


\begin{proof}
Let $\alpha = \sum_{i = 1}^t a_i \delta_i$ be a (rational) positive linear combination of extremal vectors with a minimal number of summands. 
We claim that $\delta_1, \ldots, \delta_t$ are linearly independent. If not, 
there is a rational dependence relation $\sum_{i=1}^t b_i\delta_i =0$ where at least one $b_i$ is positive.  Let $\mu := \min\{a_i/b_i \mid b_i > 0\}=a_j/b_j$.  Then $a_i - \mu b_i \geq 0$ for all $i$ and $$\alpha = \sum_{i=1}^t (a_i- \mu b_i)\delta_i.$$
This yields a decomposition of $\alpha$ as a positive linear combination of extremal vectors having fewer than $t$ nonzero summands, which is a contradiction.
\end{proof}

\subsection{Extremal rays of $\mathbb{Q}_+\Sigma_B$}
We will show that the cone $\mathbb{Q}_+\Sigma_B$ has precisely $\prod_{i=1}^mr_i$ extremal rays, one for each way of choosing one vertex within each cluster.

\begin{definition}
We let $U=\{u=(u_1,..., u_m)\in \Z^m : 1\le u_i\le r_i\text{ for all } i\}$, and
for each element $u = (u_1, u_2, \ldots, u_m)\in U$, we define a vector $\delta_u\in \mathbb{Q}_{\ge 0}^{\Gamma_0}$ with $(\delta_u)_0=1$ and $$(\delta_u)_{ij} = \begin{cases} \frac{1}{w_{iu_i}} &\; \text{if} \, j=u_i,\\ 0& \; \text{otherwise.} \end{cases}$$
for all $i,j$. Furthermore, we let $E$ denote the collection of these $\prod_{i=1}^mr_i$ vectors.
\end{definition}

Figure \ref{extremalraypic} shows an example of one of these vectors.

 \begin{proposition}
 \label{lem extremal rays}
 In the above notation, the set $E$ consists of one vector on each extremal ray of the cone $\mathbb{Q}_+\Sigma_B$.
 \end{proposition}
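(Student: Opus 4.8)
The plan is to show two things: first, that each $\delta_u$ lies on an extremal ray of $\mathfrak{C} := \mathbb{Q}_+\Sigma_B$, and second, that every extremal ray of $\mathfrak{C}$ contains some $\delta_u$. A convenient first step is to record the defining linear description of $\mathfrak{C}$: since $\Sigma_B$ consists of all vectors in $(\Z_{\ge 0})^{\Gamma_0}$ satisfying the $m$ balance equations $\sum_{j=1}^{r_i} w_{ij} d_{ij} = d_0$, the cone $\mathfrak{C}$ is exactly the set of $d \in \R_{\ge 0}^{\Gamma_0}$ satisfying those same $m$ equations (one inclusion is clear; for the reverse, given a rational $d$ in that polyhedron, clear denominators to land in $\Sigma_B$, and then take closures/limits for the real points). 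In particular $\mathfrak{C}$ is a pointed cone, since it sits inside the nonnegative orthant. Note also that each $\delta_u$ does satisfy the balance equations and has nonnegative entries, so $\delta_u \in \mathfrak{C}$; and every $\delta_u$ is a positive rational multiple of an integral balanced vector (scale by $\lcm_i w_{iu_i}$), confirming $\delta_u \in \mathbb{Q}_+\Sigma_B$ genuinely.

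Next I would prove that each $\delta_u$ spans an extremal ray. The cleanest route is the standard face criterion: a ray $\R_{\ge 0}\,\delta$ in a pointed polyhedral cone cut out by inequalities is extremal iff the inequalities active at $\delta$ pin down $\delta$ up to scaling, i.e. the solution set of (the active constraints) has dimension $1$. For $\delta_u$ the active nonnegativity constraints are $d_{ij} = 0$ for all $j \ne u_i$, and we still have the $m$ balance equations $w_{iu_i} d_{iu_i} = d_0$. Solving: all $d_{ij}$ with $j\ne u_i$ vanish, each $d_{iu_i}$ is determined by $d_0$, so the solution space is one-dimensional, spanned by $\delta_u$. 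Hence $\R_{\ge 0}\,\delta_u$ is an extremal ray. (Alternatively, one can argue directly: if $\delta_u = x + y$ with $x, y \in \mathfrak{C}$, then positivity forces $x_{ij} = y_{ij} = 0$ whenever $(\delta_u)_{ij} = 0$, and the balance equations plus $x_0 + y_0 = 1$ force $x, y$ proportional to $\delta_u$.)

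Finally I would show there are no other extremal rays, i.e. every $d \in \mathfrak{C}$ is a nonnegative combination of the $\delta_u$. Given $d \in \mathfrak{C}$, for each cluster $i$ write $d_{i u_i}$ as the ``mass'' at vertex $u_i$; the idea is to distribute the value $d_0$ across the choices $u = (u_1,\dots,u_m)$ so that the marginal in cluster $i$ at vertex $j$ is $w_{ij} d_{ij}$, which is legitimate since $\sum_j w_{ij} d_{ij} = d_0$ for every $i$ (all marginals have the same total $d_0$). Concretely, set $c_u := \frac{1}{d_0^{m-1}} \prod_{i=1}^m (w_{i u_i} d_{i u_i})$ when $d_0 > 0$ (and handle $d_0 = 0$ separately — then all $d_{ij}=0$ and $d = 0$); one checks $\sum_u c_u = d_0$ and, for fixed $i,j$, $\sum_{u: u_i = j} c_u = w_{ij} d_{ij}$, so that $\sum_u c_u \delta_u = d$ in every coordinate. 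Thus $E$ generates $\mathfrak{C}$, and combined with the extremality of each $\delta_u$ and the fact that distinct $\delta_u$ span distinct rays (their support patterns differ), we conclude $E$ meets every extremal ray exactly once and $\mathfrak{C}$ has precisely $\prod_i r_i$ extremal rays.

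The main obstacle is the last step — verifying that an arbitrary $d \in \mathfrak{C}$ decomposes as a nonnegative combination of the $\delta_u$; the product formula for $c_u$ works but its verification (the marginal computation) is the one genuinely computational point, and one must take care with the degenerate case $d_0 = 0$ and with the passage between rational and real points of the cone. Everything else — the linear description of $\mathfrak{C}$, pointedness, and extremality of each individual $\delta_u$ — is routine linear algebra.
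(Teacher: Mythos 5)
Your argument is correct, but it takes a genuinely different route from the paper's in both halves. For the generation step, the paper argues by induction on the size of the support of $d$: it picks a pair $(i_0,j_0)$ minimizing the nonzero values $d_{ij}w_{ij}$, subtracts $d_{i_0j_0}w_{i_0j_0}\delta_u$ for a suitable $u$ supported inside the support of $d$, and observes that the result is still balanced with strictly smaller support. You instead give a closed-form ``product'' decomposition $c_u = d_0^{1-m}\prod_i w_{iu_i}d_{iu_i}$, whose marginals $\sum_{u:u_i=j}c_u = w_{ij}d_{ij}$ one verifies directly; this is a clean one-shot computation (and your handling of $d_0=0$ and of the rational-versus-real description of the cone is fine). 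For extremality of each $\delta_u$, the paper deduces it from the fact that $E$ generates the cone together with the observation that no element of $E$ is a positive combination of the others (a support argument), whereas you verify extremality directly via the active-constraint criterion, or equivalently by showing $\delta_u = x+y$ with $x,y$ in the cone forces $x,y$ proportional to $\delta_u$. Both routes are sound; the paper's greedy induction has the side benefit, exploited in Remark \ref{rmk:dimension}, of producing a conical decomposition with at most $(\sum_i r_i)+1-m$ summands, which is used to compute the dimension of $\mathbb{Q}_+\Sigma_B$ --- your product formula typically uses all $\prod_i r_i$ generators with nonzero coefficient and would need a separate Carath\'eodory-type reduction to recover that bound. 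Conversely, your direct extremality check is more self-contained than the paper's, which leans on the general fact that extremal rays of a finitely generated pointed cone must appear in any generating set.
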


 \begin{proof}
First of all, it is clear that elements from $E$ are balanced. Now, it follows from Proposition \ref{prop:balanced-semistable} that the integral vectors on the rays from $E$ are $\theta$-semistable, hence are in $\mathbb{Q}_+\Sigma_B$.

 We now check that the set $E$ contains all of the extremal rays of $\mathbb{Q}_+\Sigma_B$. 
 To do so, it suffices to show that every nonzero element $d \in \mathbb{Q}_+\Sigma_B$ can be written as a positive rational combination of the elements of $E$. We show this by induction on the size $l$ of the support $\{(i,j) \mid d_{ij} \neq 0\}$ of $d$. Note that $l\ge m$ since $d$ is balanced and nonzero, and that if $l=m$ then we must have $d=d_0\cdot \delta_u$ for some $u\in U$.  Now suppose  $l>m$. 
  Let $(i_0,j_0)$ be a pair such that $d_{i_0 j_0} w_{i_0 j_0}$ is the minimum non-zero value in the set $\{d_{ij} w_{ij} \mid 1 \leq i \leq m, 1\leq j \leq r_i\}$.
    For each cluster $i$, choose
  $u_i$ with $1 \leq u_i \leq r_i$ in such a way that that $d_{iu_i} > 0$ for all $i$ and $u_{i_0}=j_0$.  Let $u=(u_i)_{1\le i\le m}$, and set $d' = d - d_{i_0 j_0}w_{i_0 j_0} \delta_u$.
  Then $d'$ is a balanced vector in $(\Z_{\ge 0})^{\Gamma_0}$, and we have $(d')_{i_0 j_0}=0$, therefore
  the support of $d'$ is properly contained in the support of $d$.  
   By induction, we can write $d'$ as a positive rational combination of the elements of $E$, so we can do the same for 
  $d = d'+ (d_{i_0 j_0}w_{i_0 j_0})\delta_u$, as desired. Finally, the minimality of $E$ follows from support considerations: an element of $E$ cannot be written as a positive linear combination of other elements of $E$. It follows from this that all elements of $E$ are extremal.
 \end{proof}

\begin{remark}
\label{rmk:dimension}
By the proof of the last proposition, every element in $\mathbb{Q}_+\Sigma_B$ has a canonical combination into at most $(\sum_{i=1}^m r_i) + 1-m$ extremal vectors, where this expression is the total number of vertices of $\Gamma$ minus $m$. This just follows from the inductive argument of that proof: each time we remove a term, the support decreases by one; and when we reach a support that is non-zero at a single vertex of each cluster, the resulting vector is a positive multiple of the corresponding element of $E$.  On the other hand, it is not hard to show that there is a subset of $E$ of size $(\sum_{i=1}^mr_i) + 1-m$ that is linearly independent, by support considerations.
Therefore, the dimension of the cone $\mathbb{Q}_+\Sigma_B$ is $(\sum_{i=1}^mr_i) + 1-m$. 
\end{remark}

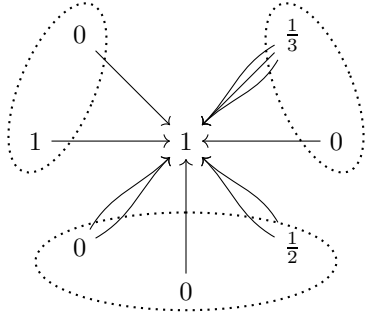
\begin{figure}
\label{extremalraypic}
    \centering
\begin{tikzpicture} 
[node distance={20mm}, main/.style = {}] 

\node (1) {$1$};

\node[main] (2) [below left of =1] {$0$}; 
\node[main] (3) [below of=1] {$0$}; 
\node[main] (4) [below right of =1] {$\frac{1}{2}$}; 

\node[main] (5) [right of=1] {$0$}; 
\node[main] (6) [above right of=1] {$\frac{1}{3}$}; 

\node[main] (7) [left of=1] {$1$}; 
\node[main] (8) [above left of=1] {$0$};

\draw[->] (2) to [out=60,in=-135,looseness=1] (1);
\draw[->] (2) to [out=30,in=-135,looseness=1] (1);
\draw[->] (4) to [out=120,in=-45,looseness=1] (1);
\draw[->] (4) to [out=150,in=-45,looseness=1] (1);
\draw[->] (3) to [out=90,in=-90,looseness=1] (1);

\draw[->] (5) to [out=180,in=0,looseness=1] (1);
\draw[->] (6) to [out=-135,in=45,looseness=1] (1);
\draw[->] (6) to [out=-120,in=45,looseness=1] (1);
\draw[->] (6) to [out=-150,in=45,looseness=1] (1);

\draw[->] (7) to [out=0,in=180,looseness=1] (1);
\draw[->] (8) to [out=-45,in=135,looseness=1] (1);

\draw[dotted,thick,rotate=22.5] (1.85,0) ellipse (0.5 and 1.2);
\draw[dotted,thick,rotate=-22.5] (-1.85,0) ellipse (0.5 and 1.2);

\draw[dotted,thick,rotate=90] (-1.6,0) ellipse (0.65 and 2);

\end{tikzpicture}
\caption{An example of an extremal ray of the subcone of balanced dimension vectors.}
\end{figure}

\medskip

The Euler bilinear form will play a crucial role in our investigation.

\begin{definition}
Let $d$ and $d'$ be in $\mathbb{Q}^{\Gamma_0}$. The \emph{Euler bilinear form} $\langle , \rangle$ is defined by 
$$ \langle d, d' \rangle = \sum_{i \in \Gamma_0} d_id'_i - \sum_{\alpha: u \to v} d_{u}d'_{v}$$
where the second sum runs through the arrow set of $\Gamma$.
\end{definition}

We recall the representation-theoretic interpretation. If $M,N$ are two representations, then
$$\langle {\rm dim}(M), {\rm dim}(N) \rangle = {\rm dim}{\rm Hom}(M,N) - {\rm dim Ext}^1(M,N).$$
Recall that a dimension vector $d$ is a \emph{Schur root} if there is a representation $M$ of dimension vector $d$ with ${\rm End}(M) = \K$. A $\theta$-stable dimension vector is always a Schur root; see \cite{DW}. Furthermore, a dimension vector $d$ is \emph{real} if $\langle d, d \rangle = 1$, \emph{isotropic} if $\langle d,d\rangle = 0$ and \emph{imaginary} if $\langle d, d \rangle < 0$. 

\medskip

Suppose $d$ is balanced. Then for any $\delta_u$, we get 
\begin{equation}
    \label{eq:d-delta_u}
\langle d,\delta_u \rangle = n + \sum_{i=1}^m \frac{d_{iu_i}}{w_{iu_i}}-  \sum_{i=1}^m\sum_{j=1}^{r_i}{d_{ij}w_{ij}}=\sum_{i=1}^m\frac{d_{iu_i}}{w_{iu_i}} - n(m-1)
\end{equation}
as well as
$$\langle \delta_u,d \rangle = n + \sum_{i=1}^m\frac{d_{iu_i}}{w_{iu_i}}-  \sum_{i=1}^mn=\sum_{i=1}^m\frac{d_{iu_i}}{w_{iu_i}} - n(m-1).$$ 

\medskip

\par Moreover, from this calculation, we can see that $$\langle \delta_u, \delta_v \rangle = \sum_{i|u_i = v_i}^m \frac{1}{w_{iu_i}^2} - (m-1).$$

Here are some additional properties of the extremal rays of $\mathbb{Q}_+\Sigma_B$ with respect to the Euler bilinear form. The proofs are just straightforward calculations.

\begin{proposition} \label{PropertiesExtremalVectors}
Let $\delta_u, \delta_v$ be distinct, and let $d, e$ be balanced vectors. Then we have the following.
\begin{enumerate}[$(1)$]
\item $\langle d,e\rangle = \langle e,d \rangle $.
    \item $\langle \delta_u, \delta_v \rangle \leq 0$, with equality if and only if both $u_i = v_i$ for all but one $i$ and when $u_i = v_i$, we have $w_{i,u_i}=1$.
    \item We have $\langle \delta_u, \delta_u \rangle < 0$ if and only if at least two weights $w_{iu_i}$ are not 1.
    \item We have $\langle \delta_u, \delta_u \rangle > 0$ if and only if at most one weight $w_{iu_i}$ is not 1.
\end{enumerate}
\end{proposition}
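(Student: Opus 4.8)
The plan is to reduce everything to the two explicit formulas already displayed just before the statement, namely
$$\langle \delta_u, \delta_v \rangle = \sum_{i\,:\,u_i = v_i} \frac{1}{w_{iu_i}^2} - (m-1) \qquad\text{and}\qquad \langle d, \delta_u\rangle = \sum_{i=1}^m \frac{d_{iu_i}}{w_{iu_i}} - n(m-1),$$
where $n=d_0$. Part $(1)$ is the claim that the Euler form is symmetric on the subspace of balanced vectors; I would prove it by direct computation from the definition of $\langle\,,\rangle$ together with the defining linear relations $\sum_{j} w_{ij} d_{ij} = d_0$ and $\sum_j w_{ij} e_{ij} = e_0$. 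Concretely, $\langle d, e\rangle - \langle e, d\rangle = \sum_{\alpha: u\to v}(e_u d_v - d_u e_v)$; since every arrow of $\Gamma$ goes from some $v_{ij}$ to $v_0$ with multiplicity $w_{ij}$, this difference equals $\sum_{i,j} w_{ij}(e_{ij} d_0 - d_{ij} e_0) = \sum_i \big(d_0 \sum_j w_{ij} e_{ij} - e_0 \sum_j w_{ij} d_{ij}\big) = \sum_i (d_0 e_0 - e_0 d_0) = 0$. (The first two displayed lines in the excerpt already verify the special case $e=\delta_u$.)

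For parts $(2)$, $(3)$, $(4)$ I would simply read off the sign of $\sum_{i\in I} \tfrac{1}{w_{iu_i}^2} - (m-1)$, where $I = \{i : u_i = v_i\}$ (with $I = \{1,\dots,m\}$ in the case $v = u$ of parts $(3)$--$(4)$). The elementary observation is that each summand $1/w_{iu_i}^2$ lies in $(0,1]$, equalling $1$ exactly when $w_{iu_i}=1$ and lying in $(0,\tfrac14]$ otherwise. For part $(2)$: since $\delta_u\neq\delta_v$ we cannot have $u_i=v_i$ for all $i$, so $|I|\le m-1$, hence $\sum_{i\in I} 1/w_{iu_i}^2 \le |I| \le m-1$, giving $\langle\delta_u,\delta_v\rangle \le 0$. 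Equality forces $|I| = m-1$ (i.e. $u_i = v_i$ for all but exactly one $i$) and forces every summand to be exactly $1$ (i.e. $w_{iu_i}=1$ for each $i\in I$), which is precisely the stated condition. For part $(3)$: $\langle\delta_u,\delta_u\rangle = \sum_{i=1}^m 1/w_{iu_i}^2 - (m-1) < 0$ iff $\sum_{i=1}^m 1/w_{iu_i}^2 < m-1$; writing $t$ for the number of indices $i$ with $w_{iu_i}=1$, the sum is $t + \sum_{w_{iu_i}\neq 1} 1/w_{iu_i}^2 \le t + (m-t)/4$, and one checks $t + (m-t)/4 < m-1 \iff m - t > 4/3 \iff m-t \ge 2$; conversely if $m-t\le 1$ then the sum is $\ge t \ge m-1$, so $\langle\delta_u,\delta_u\rangle\ge 0$. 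Thus $\langle\delta_u,\delta_u\rangle<0$ iff at least two of the weights $w_{iu_i}$ differ from $1$. Part $(4)$ is the contrapositive-style complement: if at most one weight is not $1$, i.e. $m-t\le 1$, then the sum is $t + \varepsilon$ with $\varepsilon = 0$ (all weights $1$) or $\varepsilon = 1/w^2 > 0$ for a single weight $w\ge 2$; in the first case $t=m$ and $\langle\delta_u,\delta_u\rangle = 1 > 0$, in the second $t=m-1$ and $\langle\delta_u,\delta_u\rangle = \varepsilon > 0$; and conversely if two or more weights are not $1$ then by part $(3)$ the value is negative, so in particular not positive.

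Since all four parts are finished by inspecting the sign of an explicit finite sum of terms each lying in $(0,1]$, there is no real obstacle here; the only thing to be careful about is organizing the case analysis in parts $(3)$ and $(4)$ around the count $t = \#\{i : w_{iu_i}=1\}$ and noting that the two statements are logically complementary (the value $\langle\delta_u,\delta_u\rangle$ is an integer when it is nonzero is not needed, but the trichotomy is: it is $<0$, or $=0$, or $>0$, and parts $(3)$ and $(4)$ pin down the first and last). One may also remark that the borderline isotropic case $\langle\delta_u,\delta_u\rangle = 0$ occurs exactly when $t = m-1$ and the remaining weight equals $2$, though this is not asked for in the statement. I would present part $(1)$ first as a lemma-style computation and then dispatch $(2)$--$(4)$ together from the closed formula.
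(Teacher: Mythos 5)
Your proof is correct and is exactly the ``straightforward calculation'' the paper alludes to (the paper gives no details beyond the displayed formulas $\langle \delta_u,\delta_v\rangle=\sum_{i:u_i=v_i}w_{iu_i}^{-2}-(m-1)$, which you use in the same way). One slip in your closing aside: the case $t=m-1$ with the remaining weight equal to $2$ gives $\langle\delta_u,\delta_u\rangle=\tfrac14>0$, not $0$; indeed parts $(3)$ and $(4)$, which you prove correctly, show $\langle\delta_u,\delta_u\rangle$ is never zero --- this is precisely the observation the paper makes immediately after the proposition --- so the remark should be deleted, though it does not affect the proof of $(1)$--$(4)$.
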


Note that Proposition \ref{PropertiesExtremalVectors} yields that no dimension vector on an extremal ray of $\mathbb{Q}_+\Sigma_B$ is isotropic. Moreover, any minimal vector parallel to some $\delta_u$ which satisfies $(4)$ has to be a real Schur root, since a $\theta$-stable dimension vector is necessarily a Schur root. The following definition is found in \cite{DW}. Remark that if $\delta_1, \delta_2$ are two distinct $\theta$-stable dimension vectors, then $\langle \delta_1, \delta_2 \rangle \le 0$. Indeed, if $M_i$ is a representation of dimension vector $\delta_i$ which is in general position, then $\Hom(M_1, M_2)=0$, so that
$$\langle \delta_1, \delta_2 \rangle = {\rm dim}\Hom(M_1, M_2) - {\rm dim}{\rm Ext}^1(M_1, M_2) \le 0.$$

  \begin{definition}
     Given a set $D=\{d_1,d_2,\dots,d_r\}$ of balanced $\theta$-stable dimension vectors, we define the quiver $Q(D)$ whose vertices are $Q(D)_0 = \{1,2,\dots,r\}$.
     The arrows of $Q(D)$ are determined as follows.  
     There are $1-\langle d_i,d_i\rangle$ loops at $i$ and there are $-\langle d_i,d_j \rangle$ arrows from $i$ to $j$.
  \end{definition}
  
 A quiver is \emph{path-connected} if any two vertices $i$ and $j$ in it can be connected by a (directed) path from $i$ to $j$ and also by a path from $j$ to $i$. Recall that a quiver is \emph{connected} is the underlying graph (forgetting orientation of arrows) is connected. Clearly, any quiver that is path-connected is connected, but the converse is not true. Indeed, any quiver $Q$ whose underlying graph is a non-trivial tree is connected but not path-connected.  Since, by Proposition \ref{PropertiesExtremalVectors}, the Euler bilinear form is symmetric for the balanced dimension vectors, it follows that the quiver $Q(D)$ defined above is connected if and only if it is path connected. 
  In what follows, a dimension vector is said to be \emph{divisible} if its entries share a non-trivial common factor. Otherwise, it is \emph{indivisible}. The following theorem is an adaptation of Theorem 6.4 of \cite{DW}.
  
  \begin{theorem}\label{dw 6.4}
  Let $d$ be a dimension vector in $\mathbb{Q}_+\Sigma_B$ with minimal conical decomposition
  $$d = \sum_{i=1}^r a_i \delta_i$$
  where the $\delta_i$ are some extremal dimension vectors of  $\mathbb{Q}_+\Sigma_B$. Then $d$ is $\theta$-stable if and only if either
  \begin{enumerate}[$(1)$]
        \item $d$ is the minimal integral vector on an extremal ray of a $\delta_u$ whose support has at most one non-trivial weight, or
        \item $\langle d, \delta_u \rangle \leq 0$ for each extremal vector $\delta_u$, the quiver $Q(\{\delta_1, \ldots, \delta_r\})$ is (path) connected, and $d$ is not both isotropic and divisible.
    \end{enumerate}
  \end{theorem}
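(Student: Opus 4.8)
The plan is to deduce Theorem \ref{dw 6.4} from Theorem 6.4 of Derksen--Weyman \cite{DW}, which characterizes $\sigma$-stable dimension vectors of an arbitrary acyclic quiver $Q$ in terms of a minimal conical decomposition inside the cone $\mathbb{Q}_+\Sigma(Q,\sigma)$ generated by all $\sigma$-semistable dimension vectors. The main point is that, in our situation, the relevant cone can be replaced by the much smaller subcone $\mathbb{Q}_+\Sigma_B$ of balanced dimension vectors, whose extremal rays we have already identified in Proposition \ref{lem extremal rays}. So the first step is to reconcile the two ambient cones: I would argue that if $d$ is balanced and $\theta$-stable, then every summand $\delta_i$ appearing in its minimal conical decomposition \emph{inside $\mathbb{Q}_+\Sigma(\Gamma,\theta)$} is already a scalar multiple of one of the $\delta_u$. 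Indeed, a $\theta$-stable vector lies in $\mathbb{Q}_+\Sigma_B$ (it is balanced by Proposition \ref{subrep} applied to a $\theta$-stable representation, since $\theta(\dim) = 0$ forces membership in $\mathcal{C}$), and the Derksen--Weyman decomposition of a vector in a subcone can be taken inside that subcone; alternatively one invokes that the extremal rays contributing to the decomposition of a $\theta$-stable vector are themselves $\theta$-stable, and a $\theta$-stable (hence balanced) vector on an extremal ray of the big cone must lie on an extremal ray of $\mathbb{Q}_+\Sigma_B$, which by Proposition \ref{lem extremal rays} means it is proportional to some $\delta_u$. This identification is what lets us translate the general criterion into the two cases of the statement.

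The second step is to match the combinatorics of the Derksen--Weyman criterion with conditions $(1)$ and $(2)$. Their Theorem 6.4 says (roughly) that $d$ with minimal conical decomposition $d = \sum a_i \delta_i$ is $\sigma$-stable if and only if either $d$ lies on a single extremal ray spanned by a \emph{real} Schur root, or the $\delta_i$ are the dimension vectors of the $\sigma$-stable summands in the canonical $\sigma$-stable decomposition, the associated quiver $Q(\{\delta_1,\dots,\delta_r\})$ is connected, each $\delta_i$ satisfies the appropriate pairing inequality, and $d$ is not simultaneously isotropic and divisible. So I would (a) show that for an extremal $\delta_u$, being a real Schur root is equivalent to its support having at most one non-trivial weight --- this is exactly Proposition \ref{PropertiesExtremalVectors}(4) together with the remark that the minimal integral vector parallel to such a $\delta_u$ is then a real Schur root, and that when two or more weights are non-trivial, $\langle \delta_u,\delta_u\rangle < 0$ by \ref{PropertiesExtremalVectors}(3) so $\delta_u$ is imaginary and cannot be on a single-ray stable locus; (b) rewrite the hypothesis ``$\langle d,\delta_i\rangle \le 0$ for all summands $\delta_i$'' as ``$\langle d,\delta_u\rangle \le 0$ for \emph{every} extremal vector $\delta_u$ of $\mathbb{Q}_+\Sigma_B$'' --- the forward direction is trivial, and the reverse follows because the summands $\delta_i$ are among the $\delta_u$; (c) note that the connectivity of $Q(\{\delta_1,\dots,\delta_r\})$ as defined here coincides with the Derksen--Weyman notion because, by Proposition \ref{PropertiesExtremalVectors}(1), the Euler form is symmetric on balanced vectors, so path-connectedness and connectedness agree (as already observed just before the theorem).

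A subtlety I would address carefully is the passage between ``$d$ is $\theta$-stable'' and the canonical decomposition data: Derksen--Weyman phrase their result in terms of the canonical $\sigma$-stable decomposition of $d$, and I need to know that for a balanced $d$ this canonical decomposition has summands proportional to the extremal $\delta_u$ appearing in the minimal conical decomposition, with the multiplicities $a_i$ matching up to clearing denominators. This is where the bulk of the genuine argument lies; I expect to use that distinct $\theta$-stable balanced dimension vectors $\delta, \delta'$ satisfy $\langle \delta,\delta'\rangle \le 0$ (the remark preceding the definition of $Q(D)$), so that the $\delta_i$ behave like the simple roots of a symmetric generalized Cartan matrix, and the general position / genericity input from \cite{DW} forces the decomposition to be "orthogonal-exceptional-like", hence essentially unique and supported on extremal rays. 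Once this dictionary is in place, conditions $(1)$ and $(2)$ are just the verbatim translation of the two alternatives in \cite[Theorem 6.4]{DW}.

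\textbf{Main obstacle.} The hard part will be \emph{not} re-proving Derksen--Weyman but correctly importing it: making precise that the minimal conical decomposition in $\mathbb{Q}_+\Sigma_B$ is the same data as their canonical $\sigma$-stable decomposition of a balanced vector, and that no extra $\theta$-stable dimension vectors outside the rays $\mathbb{Q}_+\delta_u$ can enter. I expect to handle this by first showing directly that a $\theta$-stable $d$ is balanced, then that any $\theta$-stable summand of its decomposition is balanced and extremal in $\mathbb{Q}_+\Sigma_B$ and hence proportional to a $\delta_u$; everything else is bookkeeping with the Euler form via Proposition \ref{PropertiesExtremalVectors}.
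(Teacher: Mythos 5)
Your overall strategy is the same as the paper's: reduce everything to \cite[Theorem 6.4]{DW} and then translate its two alternatives into conditions $(1)$ and $(2)$ using Proposition \ref{PropertiesExtremalVectors} (real extremal rays are exactly those whose support has at most one non-trivial weight, the Euler form is symmetric on balanced vectors so connectedness and path-connectedness of $Q(D)$ agree, and any extremal $\delta_v$ with $\langle d,\delta_v\rangle>0$ must occur in the minimal conical decomposition). Your second step is essentially verbatim the list of preliminary observations with which the paper opens its proof.

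The gap is in your first step, the reconciliation of the two cones. You assert that a $\theta$-stable dimension vector is automatically balanced, citing Proposition \ref{subrep}; but that proposition only applies to subrepresentations and quotients of objects already in $\mathcal{C}$, and $\theta(\dim M)=0$ alone does not force $M\in\mathcal{C}$ (Figure \ref{ce2} is exactly such an example). Worse, the claim itself is false at the level of dimension vectors: for $A=\K^2*\K^3$ the vector $d$ with $d_0=2$, $(d_{11},d_{12})=(1,0)$, $(d_{21},d_{22},d_{23})=(1,1,1)$ satisfies $\theta(d)=0$ and is realized by a $\theta$-stable representation (one injective line from the first cluster and three pairwise distinct generic lines from the second), yet its first cluster sums to $1\ne 2$. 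Hence $\mathbb{Q}_+\Sigma(\Gamma,\theta)$ strictly contains $\mathbb{Q}_+\Sigma_B$, its extremal rays need not be balanced, and you cannot conclude that the Derksen--Weyman minimal conical decomposition of a balanced $d$, taken in the big cone, is supported on the rays $\mathbb{Q}_+\delta_u$; nor is it formal that ``the decomposition of a vector in a subcone can be taken inside that subcone,'' since extremal rays of $\mathbb{Q}_+\Sigma_B$ need not be extremal in the big cone. The paper sidesteps this comparison entirely: it re-examines the proof of \cite[Theorem 6.4]{DW} (and Lemma 6.1 there) and checks that the argument runs unchanged with $\mathbb{Q}_+\Sigma_B$ in place of $\mathbb{Q}_+\Sigma(\Gamma,\theta)$, the only property needed being that every extremal ray of $\mathbb{Q}_+\Sigma_B$ carries a nonzero $\theta$-stable vector (Proposition \ref{lem extremal rays} together with Proposition \ref{PropertiesExtremalVectors}). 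To complete your plan you would need either that re-derivation or an actual proof that the two decompositions of a balanced vector coincide; the justification you offer does not supply it.
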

  \begin{proof}
  We start with some observations. First, note that $\langle d, \delta_u \rangle = \langle \delta_u, d \rangle$ for all extremal vectors $\delta_u$ of $\mathbb{Q}_+\Sigma_B$. Second, we have shown in Proposition \ref{PropertiesExtremalVectors} that no vector on an extremal ray of $\mathbb{Q}_+\Sigma_B$ is isotropic. Moreover, by Proposition \ref{PropertiesExtremalVectors}, part (4), the ray of $\delta_u$ contains a real Schur root if and only if the support of $u$ has at most one non-trivial weights. If $d$ is on the ray of $\delta_u$ and is imaginary, then the second condition in the statement is satisfied. Finally, if $\delta_v$ is any extremal vector with $\langle d, \delta_v \rangle > 0$, then (a multiple of) $\delta_v$ has to appear in the minimal conical decomposition for $d$. Indeed, this follows from the facts that the Euler form is bilinear and that for $\delta, \delta'$ distinct $\theta$-stable vectors, one has $\langle \delta, \delta' \rangle \le 0$.

  Now, a careful review of the proof of Theorem 6.4 (and Lemma 6.1) of \cite{DW} shows that it is still valid if one replaces ``extremal rays of $\mathbb{Q}_+\Sigma (\Gamma,\theta)$" by ``extremal rays of $\mathbb{Q}_+\Sigma_B$". Indeed, the only crucial property that is needed is that each $\delta_u$ has a non-zero multiple that is $\theta$-stable. Therefore, it follows from Theorem 6.4 of \cite{DW} that $d$ is $\theta$-stable if and only if either 
  \begin{enumerate}[$(i)$]
      \item $d$ is a real Schur root on an extremal ray of $\mathbb{Q}_+\Sigma_B$,
      \item or else that $\langle d, \delta_u \rangle \le 0, \langle \delta_u, d \rangle \le 0$ for all $u \in E$, $Q(\{\delta_1, \ldots, \delta_r\})$ is path-connected and $d$ is indivisible when $d$ is isotropic.
  \end{enumerate} 
 Now, it is straightforward to check, using the above observations, that $(1)$ is equivalent to $(i)$ and that $(2)$ is equivalent to $(ii)$.
  \end{proof}
  
 According to Theorem \ref{dw 6.4}, we need to study (path-)connectedness of the quivers of the form $Q(D)$ where $D$ is a collection of linearly independent $\delta_u$, and we also need to find those isotropic dimension vectors $d$ with the property that $\langle d, \delta_u \rangle \le 0$ for all extremal vector $\delta_u$ of $\mathbb{Q}_+\Sigma_B$. 
 We achieve those below. We begin our analysis by studying connectedness of the quiver $Q(D)$.

\begin{lemma}\label{Lem: isolated vertex}  Let $d$ be a balanced dimension vector with minimal conical decomposition
 $d = \sum_{i=1}^r  a_i \delta_i$ where each $a_i$ is a positive rational number. 
  If the quiver $Q(\{\delta_1, \ldots, \delta_r\})$ contains an isolated vertex $\ell$,  then $\langle d, \delta_\ell\rangle > 0$. 
\end{lemma}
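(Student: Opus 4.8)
The plan is to unwind what it means for a vertex of $Q(\{\delta_1,\dots,\delta_r\})$ to be isolated and then invoke bilinearity of the Euler form; the whole argument is then essentially a one-line computation. First I would translate the hypothesis. By the definition of $Q(\{\delta_1,\dots,\delta_r\})$, the vertex $\ell$ carries $1-\langle\delta_\ell,\delta_\ell\rangle$ loops and is joined to each $j\neq\ell$ by $-\langle\delta_\ell,\delta_j\rangle$ arrows out of $\ell$ together with $-\langle\delta_j,\delta_\ell\rangle$ arrows into $\ell$. Hence the isolation of $\ell$ forces $\langle\delta_\ell,\delta_\ell\rangle>0$ (the loop count is $0$; equivalently $\langle\delta_\ell,\delta_\ell\rangle=1$ for the indivisible integral vector on that ray) and $\langle\delta_\ell,\delta_j\rangle=0=\langle\delta_j,\delta_\ell\rangle$ for every $j\neq\ell$. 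The $\delta_i$ lie on extremal rays of $\mathbb{Q}_+\Sigma_B$ and are therefore balanced (Proposition~\ref{lem extremal rays}), so the Euler form is symmetric on them and on $d$ (Proposition~\ref{PropertiesExtremalVectors}.(1)); this symmetry is exactly what lets me deduce $\langle\delta_j,\delta_\ell\rangle=0$ from the absence of arrows into $\ell$.

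The conclusion is then immediate. Using bilinearity of the Euler form together with the relations just obtained,
$$\langle d,\delta_\ell\rangle=\Bigl\langle \sum_{i=1}^r a_i\delta_i,\ \delta_\ell\Bigr\rangle=\sum_{i=1}^r a_i\langle\delta_i,\delta_\ell\rangle=a_\ell\langle\delta_\ell,\delta_\ell\rangle,$$
which is strictly positive since $a_\ell$ is a positive rational number by hypothesis and $\langle\delta_\ell,\delta_\ell\rangle>0$.

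I do not expect any genuine obstacle. The only point demanding care is the interpretation of \emph{isolated}: it must exclude loops as well as incident arrows, so that $\langle\delta_\ell,\delta_\ell\rangle$ is positive rather than merely bounded above by $1$; and one must remember that the Euler form is symmetric on balanced vectors so that incoming arrows can be discarded along with outgoing ones. Both ingredients are already available from the earlier part of the paper.
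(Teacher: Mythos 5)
Your computation is correct and coincides with the paper's in its main step: non-adjacency of $\ell$ to every $j\neq\ell$ gives $\langle\delta_j,\delta_\ell\rangle=0$, and bilinearity then yields $\langle d,\delta_\ell\rangle=\sum_i a_i\langle\delta_i,\delta_\ell\rangle=a_\ell\langle\delta_\ell,\delta_\ell\rangle$. Where you diverge is in how $\langle\delta_\ell,\delta_\ell\rangle>0$ is obtained. You read it off from the vanishing of the loop count $1-\langle\delta_\ell,\delta_\ell\rangle$, which forces you to interpret ``isolated'' as excluding loops; the paper never uses loops at all. Instead it takes some $i\neq\ell$, uses $\langle\delta_i,\delta_\ell\rangle=0$ together with the equality case of Proposition~\ref{PropertiesExtremalVectors}(2) to conclude that at most one weight in the support of $\delta_\ell$ is non-trivial, and then applies Proposition~\ref{PropertiesExtremalVectors}(4) to get $\langle\delta_\ell,\delta_\ell\rangle>0$. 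The distinction is not cosmetic: the reading of ``isolated'' that the surrounding results need (Lemma~\ref{lem:disconnected} and Corollary~\ref{corollary: connected}) is ``adjacent to no other vertex,'' with loops irrelevant, and a $\delta_\ell$ supported on exactly one non-trivial weight $w$ has $\langle\delta_\ell,\delta_\ell\rangle=1/w^2\in(0,1)$, hence a positive loop count --- such a vertex would not be isolated in your sense even though the lemma must cover it. So under the interpretation the paper actually uses, your justification of the positivity does not apply and should be replaced by the weight argument via Proposition~\ref{PropertiesExtremalVectors}(2) and (4). (Conversely, the paper's argument tacitly assumes $r\geq 2$, and for $r=1$ your loop observation is the relevant one.) Everything else in your write-up matches the paper's proof.
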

\begin{proof}
 Consider $i$ with $1 \leq i \leq r$ and $i \neq \ell$.  
 Since $i$ is not adjacent to $\ell$ in $Q(\{\delta_1, \ldots, \delta_r\})$ it follows that $\langle \delta_i , \delta_\ell \rangle = 0$. From parts (2) and (4) of Proposition \ref{PropertiesExtremalVectors}, we know that all but one weights in the support of $\delta_\ell$ are trivial, which in turn implies that $\langle \delta_\ell, \delta_\ell \rangle > 0$.
   Thus $\langle d , \delta_\ell\rangle = \sum_{i=1}^r a_i\langle \delta_i , \delta_\ell\rangle = a_\ell \langle \delta_\ell , \delta_\ell\rangle > 0$.
\end{proof}

Next we consider the case where $Q(D)$ has no isolated vertices.

\begin{lemma}\label{lem:disconnected}
 Let $d$ be a balanced dimension vector with minimal conical decomposition
 $d = \sum_{i=1}^r  a_i \delta_i$ where each $a_i$ is a positive rational number.
  Suppose that the quiver $Q(\{\delta_1, \ldots, \delta_r\})$ has no isolated vertex.
 Then $Q(\{\delta_1, \ldots, \delta_r\})$ is connected.
\end{lemma}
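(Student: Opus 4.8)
The plan is to argue by contradiction, exploiting the special combinatorial structure of the extremal vectors $\delta_u$ coming from Proposition \ref{PropertiesExtremalVectors}(2): two distinct extremal vectors $\delta_u,\delta_v$ are \emph{non}-adjacent in $Q(D)$ precisely when $\langle\delta_u,\delta_v\rangle=0$, which happens exactly when $u$ and $v$ agree in all but one coordinate, say coordinate $i_0$, and in every coordinate $i\neq i_0$ where they agree the weight $w_{i,u_i}$ is trivial. I would first record this as the key structural observation and translate the "no isolated vertex / disconnected" hypothesis into a statement about the index vectors $u^{(1)},\dots,u^{(r)}\in U$ labelling $\delta_1,\dots,\delta_r$.

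Suppose for contradiction that $Q(D)$ is disconnected with no isolated vertex, so we may split $\{1,\dots,r\}=S\sqcup T$ with $S,T$ nonempty and $\langle\delta_s,\delta_t\rangle=0$ for all $s\in S,t\in T$. Pick any $s\in S$. Since $s$ is not isolated it has a neighbour $s'\in S$, so $\langle\delta_s,\delta_{s'}\rangle<0$; by the structural observation this forces $u^{(s)}$ and $u^{(s')}$ to differ in at least two coordinates, \emph{or} to differ in exactly one coordinate $i_0$ with $w_{i_0,u^{(s)}_{i_0}}\neq 1$ (the case where the single disagreement sits at a non-trivial weight). The decisive point I want to extract is the following "one-coordinate" rigidity: if $t\in T$ satisfies $\langle\delta_s,\delta_t\rangle=0$ and also $\langle\delta_{s'},\delta_t\rangle=0$, then $u^{(t)}$ must agree with both $u^{(s)}$ and $u^{(s')}$ outside a single coordinate; since $u^{(s)}$ and $u^{(s')}$ already disagree in $\geq 2$ coordinates (or in one coordinate carrying a non-trivial weight, which the agreement-condition in (2) also rules out for $u^{(t)}$), this is impossible. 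So no such $t$ exists — contradicting that $T$ is nonempty and every vertex of $T$ is non-adjacent to all of $S$, in particular to both $s$ and $s'$.

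The one remaining gap is the sub-case where every edge inside $S$ (and inside $T$) is of the "single disagreeing coordinate with non-trivial weight" type; here $u^{(s)}$ and $u^{(s')}$ differ in just one coordinate $i_0$, both entries at $i_0$ having non-trivial weight. Then a $t\in T$ with $\langle\delta_s,\delta_t\rangle=\langle\delta_{s'},\delta_t\rangle=0$ would have to agree with $u^{(s)}$ outside one coordinate $i_1$ \emph{and} with $u^{(s')}$ outside one coordinate $i_1'$; comparing forces $\{i_1,i_1'\}=\{i_0\}$ impossible unless $i_1=i_1'=i_0$, but then $u^{(t)}$ agrees with $u^{(s)}$ everywhere except possibly $i_0$, and the non-triviality of $w_{i,u^{(s)}_i}$ at the coordinates $i\neq i_0$ where they agree violates condition (2) (which demands trivial weight at all agreeing coordinates when $\langle\delta_s,\delta_t\rangle=0$). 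So again no such $t$ exists. I expect the main obstacle to be organizing this case analysis cleanly — it is really a small argument about two vectors in $\prod_i\{1,\dots,r_i\}$ and the weights attached to their coordinates — rather than anything deep; linear independence of the $\delta_i$ is not even needed beyond guaranteeing $r\ge 2$ when $d$ is not a multiple of a single $\delta_u$, and the whole proof reduces to the observation that "non-adjacent" is an extremely restrictive relation, so a vertex cannot be non-adjacent to two vertices that are themselves adjacent.
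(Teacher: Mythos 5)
There is a genuine gap, and it sits exactly at the step you call the ``decisive point.'' You claim that if $\langle\delta_s,\delta_{s'}\rangle<0$ (so $s,s'$ adjacent) then no $t$ can satisfy $\langle\delta_s,\delta_t\rangle=\langle\delta_{s'},\delta_t\rangle=0$. This is false. Non-adjacency of $\delta_t$ to $\delta_s$ forces $u^{(t)}$ to agree with $u^{(s)}$ outside one coordinate $i_1$, and non-adjacency to $\delta_{s'}$ forces agreement with $u^{(s')}$ outside one coordinate $i_1'$ --- but nothing forces $i_1=i_1'$. If $i_1\neq i_1'$, then $u^{(s)}$ and $u^{(s')}$ may differ in exactly the two coordinates $i_1,i_1'$, which is perfectly compatible with their being adjacent. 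Concretely, take $u^{(1)}=(1,1,1,\dots,1)$, $u^{(2)}=(2,2,1,\dots,1)$, $u^{(3)}=(2,1,1,\dots,1)$, $u^{(4)}=(1,2,1,\dots,1)$ with all the weights involved trivial: then $\delta_{u^{(1)}}$ and $\delta_{u^{(2)}}$ are adjacent, $\delta_{u^{(3)}}$ and $\delta_{u^{(4)}}$ are adjacent, and every cross pair is non-adjacent by Proposition \ref{PropertiesExtremalVectors}(2). The quiver on these four vertices is two disjoint edges: disconnected, with no isolated vertex. So your argument, as stated, ``proves'' something false.

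What rescues the lemma --- and what the paper's proof actually does --- is that this four-vector configuration satisfies $\delta_{u^{(1)}}+\delta_{u^{(2)}}=\delta_{u^{(3)}}+\delta_{u^{(4)}}$, so the four vectors are linearly dependent and therefore cannot all appear in a \emph{minimal} conical decomposition. The paper shows that any disconnected, isolated-vertex-free $Q(D)$ is forced (after normalizing one tuple to $(1,1,\dots,1)$ and repeatedly applying the agreement condition of Proposition \ref{PropertiesExtremalVectors}(2)) to collapse to exactly this configuration, and then invokes linear independence to conclude. Your closing remark that ``linear independence of the $\delta_i$ is not even needed beyond guaranteeing $r\ge 2$'' is therefore precisely backwards: linear independence is the essential ingredient, and the whole point of the combinatorial analysis is to reduce to a situation where it can be applied. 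Your treatment of the one-coordinate-with-nontrivial-weight subcase is fine, but that is the easy case; the two-coordinate case is where the content lies, and it cannot be dispatched by the local ``a vertex cannot be non-adjacent to two adjacent vertices'' principle.
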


\begin{proof}
  Let $D = \{\delta_1, \ldots, \delta_r\}$. Assume that $Q(D)$ is disconnected. Hence, it contains at least two connected components $C$ and $C'$. Since the quiver has no isolated vertices we have
  $|C|\geq 2$ and $|C'| \geq 2$.  Let $\delta_i, \delta_j$ be two  vertices in $C$ which are joined by an arrow.  Similarly let $\delta_p, \delta_q$ be two vertices in $C'$ which are joined by an arrow.

  Define $u_\ell \in \{(b_1,b_2,\dots,b_m) \mid 1 \leq a_i \leq r_i\}$ by $\delta_\ell=\delta_{u_\ell}$ for $1 \leq \ell \leq r$. 
  Without loss of generality we may assume that $u_i = (1,1,\dots,1)$.  Since
  $\delta_i$ and $\delta_p$ are not adjacent, we have $\langle \delta_i , \delta_p\rangle=0$, so by Proposition \ref{PropertiesExtremalVectors}.(2)
  we may assume, without loss of generality, that $u_p=(2,1,1,\dots,1)$ and that $w_{i1}=1$ for $i \ge 2$. Since $\delta_j$ is adjacent to $\delta_i$ but not adjacent to $\delta_p$, we may similarly assume that $u_j=(2,2,1,1,\dots,1)$ and that $w_{12}=1$. Finally, since $\delta_q$ is adjacent to $\delta_p$ but not adjacent to either $\delta_i$ nor $\delta_j$, it follows that $u_q=(1,2,1,1,\dots,1)$ and that $w_{11} = w_{22}= 1$ as well.
  
  Suppose there exists $\delta_k \in Q_0(D)\setminus C$.  Such a vertex $\delta_k$ is not adjacent to $\delta_i$ nor to $\delta_j$.
 This implies $u_k=u_p$ or $u_k=u_q$ and thus  $Q_0(D)\setminus C = \{\delta_p,\delta_q\}=C'$.  Similarly $Q_0(D)\setminus C' =C=\{\delta_i,\delta_j\}$.  Thus $D= C \cup C' = \{\delta_i,\delta_j,\delta_p,\delta_q\}$.
 Observe that $\delta_i + \delta_j = \delta_p + \delta_q$. This contradicts that the vectors in the minimal conical decomposition are linearly independent.
\end{proof}

Combining Lemmas \ref{Lem: isolated vertex} and \ref{lem:disconnected} yields the following consequence.

\begin{cor}\label{corollary: connected}
Let $d$ be a balanced dimension vector with minimal conical combination
 $d = \sum_{i=1}^r  a_i \delta_i$ where each $a_i$ is a positive rational number.
 If $\langle d, \delta_u\rangle\leq 0$
  for all extremal vector $\delta_u$ of $\mathbb{Q}_+\Sigma_B$,
 then $Q(\{\delta_1, \ldots, \delta_r\})$ is connected.\qed
\end{cor}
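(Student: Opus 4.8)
The plan is to deduce the statement immediately from the two preceding lemmas via a short argument by contradiction. Set $D=\{\delta_1,\dots,\delta_r\}$ and suppose, toward a contradiction, that $Q(D)$ is disconnected. A disconnected quiver falls into one of two mutually exclusive cases: either it contains a vertex with no incident arrows (an isolated vertex), or every vertex lies in a connected component of size at least two.

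In the first case, let $\ell$ be an isolated vertex of $Q(D)$. Then Lemma \ref{Lem: isolated vertex} applies directly and gives $\langle d,\delta_\ell\rangle>0$. Since $\delta_\ell$ lies on an extremal ray of $\mathbb{Q}_+\Sigma_B$, it is a positive multiple of one of the extremal vectors $\delta_u$, and the Euler form is linear in each argument, so $\langle d,\delta_u\rangle>0$ as well. This contradicts the hypothesis that $\langle d,\delta_u\rangle\le 0$ for every extremal vector $\delta_u$ of $\mathbb{Q}_+\Sigma_B$.

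In the second case, $Q(D)$ has no isolated vertex, so Lemma \ref{lem:disconnected} applies and concludes that $Q(D)$ is connected, contradicting the assumption that it is disconnected. Since both cases are impossible, $Q(D)$ must be connected.

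There is essentially no obstacle here, as all the real work has already been carried out in Lemmas \ref{Lem: isolated vertex} and \ref{lem:disconnected}; the only things to verify are that the two lemmas together cover every way a quiver can fail to be connected (which they do, by the dichotomy above) and that an isolated vertex of $Q(D)$ corresponds to one of the extremal vectors $\delta_u$ appearing in the hypothesis (which is clear from the definition of a minimal conical decomposition).
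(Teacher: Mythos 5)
Your proof is correct and is exactly the argument the paper intends: the corollary is stated with a \qed precisely because it follows by combining Lemma \ref{Lem: isolated vertex} (which rules out an isolated vertex under the hypothesis $\langle d,\delta_u\rangle\le 0$) with Lemma \ref{lem:disconnected} (which handles the no-isolated-vertex case). Your added remark that $\delta_\ell$ is a positive multiple of some extremal vector $\delta_u$, so the sign of $\langle d,\cdot\rangle$ transfers, is a correct and worthwhile detail to make explicit.
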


The following definition provides a family of balanced dimension vectors that are easy to identify, but that will need to be treated differently for the purpose of the main theorem.

 \begin{definition}
A balanced dimension vector $d$ of $\Gamma$ with $d_0 = n > 2$ is \emph{special isotropic} if $d$ has exactly $4$ entries $d_{ij}$ with $0 < d_{ij} < n$, and these entries are all equal to  $n/2$ with trivial corresponding weights.
\end{definition}
  
We note that for a special isotropic dimension vector, there are exactly two clusters with (unordered) entries $n/2, n/2, 0, \ldots, 0$ and all of the other clusters $i$ have a vertex with entry $n$ and trivial weight (all other entries in cluster $i$ are zero). It is not hard to check that such a dimension vector is isotropic, and is indivisible if and only if $n=2$.

\begin{lemma}\label{lem: isotropic}
If $d$ balanced isotropic and $\langle d,\delta_u\rangle\leq 0$ for each tuple $u\in E$, then $d$ is special isotropic. 
\end{lemma}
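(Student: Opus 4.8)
The plan is to extract concrete numerical constraints from the hypothesis $\langle d,\delta_u\rangle\le 0$ for all $u\in E$, combined with isotropy $\langle d,d\rangle=0$, and deduce that exactly two clusters have a split entry pattern while all other clusters have a single full entry with trivial weight. First I would record, using Equation~\eqref{eq:d-delta_u}, that for $n:=d_0$ and each $u\in U$ the hypothesis reads $\sum_{i=1}^m \frac{d_{iu_i}}{w_{iu_i}}\le n(m-1)$. Since for each cluster $i$ we have $\sum_{j=1}^{r_i} w_{ij}d_{ij}=n$, the quantity $\frac{d_{ij}}{w_{ij}}$ ranges over values bounded by $n/w_{ij}^2\le n$, and choosing $u_i$ to be the index in cluster $i$ that maximizes $\frac{d_{iu_i}}{w_{iu_i}}$ turns the inequality into $\sum_{i=1}^m M_i \le n(m-1)$, where $M_i:=\max_j \frac{d_{ij}}{w_{ij}}$. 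Equivalently $\sum_{i=1}^m (n-M_i)\ge n$, i.e. the "defects" $n-M_i\ge 0$ sum to at least $n$.

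Next I would bring in isotropy. Using the formula for $\langle d,d\rangle$ (the Euler form specialized to $\Gamma$: $\langle d,d\rangle = n^2 + \sum_{i,j} d_{ij}^2 - \sum_i \sum_j w_{ij} d_{ij} \cdot n = n^2 - (m-1)\sum_{i,j} \ldots$ — here I would carefully expand $\langle d,d\rangle$ via the arrow count, getting a sum over clusters of terms involving $\sum_j d_{ij}^2$ and $n$), the condition $\langle d,d\rangle=0$ should produce an identity of the shape $\sum_{i=1}^m \left(\text{something nonnegative measuring how far cluster } i \text{ is from being } \delta_u\text{-like}\right) = n$, closely paralleling the defect inequality above. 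The key point is that for a cluster concentrated at a single vertex $j$ with $d_{ij}=n$ and $w_{ij}=1$, the corresponding cluster-contribution vanishes, whereas any deviation (either a nontrivial weight at the max vertex, or mass spread over $\ge 2$ vertices) makes it strictly positive; I would quantify this so that the minimal positive cluster-contribution is $n$ when there are exactly two vertices each carrying $n/2$ with trivial weight. Comparing the isotropy identity with the defect inequality forces: at most two clusters are "deviant," each deviant cluster contributes exactly its minimal possible amount, and that minimum equals $n$ — pinning down the split-into-two-halves pattern and ruling out nontrivial weights at the active vertices. Since $d$ is isotropic (hence imaginary-boundary, not a single $\delta_u$ which would be non-isotropic by Proposition~\ref{PropertiesExtremalVectors}), there must be at least two deviant clusters, so exactly two. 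That is precisely the definition of special isotropic, once I check $n>2$ (if $n\le 2$ the pattern degenerates, but then $d$ would be a multiple of a $\delta_u$ or otherwise non-isotropic — I'd dispatch the small cases by hand).

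The main obstacle I anticipate is the bookkeeping in the second step: correctly expanding $\langle d,d\rangle$ for $\Gamma$ and identifying the "cluster-contribution" as a genuinely nonnegative quantity whose zero locus and whose minimal positive value are both exactly as claimed. Concretely, within a fixed cluster $i$ with entries $(d_{i1},\dots,d_{ir_i})$ and weights $(w_{i1},\dots,w_{ir_i})$ subject to $\sum_j w_{ij}d_{ij}=n$, I need the sharp inequality relating $\sum_j d_{ij}^2$, $n M_i$, and $n$ in a way that is tight simultaneously with the defect bound $n - M_i$. I expect this reduces to an elementary but slightly fiddly optimization: among all such clusters, the ones with $\frac{d_{ij}}{w_{ij}}$-defect at most $n$ and with the right quadratic sum are exactly the single-full-trivial-weight clusters (defect $0$) and the two-halves-trivial-weight clusters (defect $n/2$ each, but appearing in a pair contributing total $n$). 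Getting the constants to line up — in particular verifying no configuration with a nontrivial weight can sneak in with the correct defect and quadratic contributions — is where the care is needed; everything else is assembling these inequalities and invoking that an isotropic $d$ cannot lie on a single extremal ray.
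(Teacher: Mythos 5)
Your route is genuinely different from the paper's, and it can be made to work. The paper establishes the key structural fact --- that within each cluster all nonzero ratios $d_{ij}/w_{ij}$ are equal --- by taking a minimal conical decomposition $d=\sum a_i\delta_i$, noting that isotropy together with $\langle d,\delta_u\rangle\le 0$ forces $\langle d,\delta_v\rangle=0$ for every $\delta_v$ in the decomposition, and then running a support-swapping argument (if two distinct nonzero ratios occurred in a cluster, moving the support of some $\delta_v$ to the larger-ratio vertex would give $\langle d,\delta_u\rangle>0$). You get the same fact with no conical decomposition at all, from a quadratic tightness analysis: with $M_i=\max_j d_{ij}/w_{ij}$, the bound $d_{ij}^2=(w_{ij}d_{ij})(d_{ij}/w_{ij})\le (w_{ij}d_{ij})M_i$ sums over a cluster (using balancedness) to $\sum_j d_{ij}^2\le nM_i$, hence $\sum_{i,j}d_{ij}^2\le n\sum_i M_i\le (m-1)n^2$ by your defect inequality; since for balanced $d$ one computes $\langle d,d\rangle=\sum_{i,j}d_{ij}^2-(m-1)n^2$, isotropy forces equality throughout, and tightness of the first inequality gives $d_{ij}/w_{ij}=M_i$ whenever $d_{ij}>0$. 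From there your finish coincides with the paper's: balancedness yields $n=M_i\sum_{j:\,d_{ij}>0}w_{ij}^2$, so $M_i\le n/2$ unless the cluster is concentrated at a single trivial-weight vertex, and $\sum_i M_i=(m-1)n$ then forces exactly two ``deviant'' clusters, each with $M_i=n/2$ and $\sum_{j:\,d_{ij}>0}w_{ij}^2=2$, i.e.\ two trivial-weight vertices carrying $n/2$ each. Your version is more elementary in that it bypasses Carath\'eodory and the extremal-ray machinery; the paper's version reuses infrastructure already set up for Theorem \ref{dw 6.4}.

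Two corrections before you write it up. First, your assertion that ``the minimal positive cluster-contribution is $n$,'' attained at the two-halves pattern, is false and should be deleted: the contribution $n^2-\sum_j d_{ij}^2$ of a trivial-weight cluster with entries $(n-1,1,0,\dots,0)$ is $2n-2$, which is well below $n^2/2$ for $n\ge 3$ (also, the isotropy identity reads $\sum_i\bigl(n^2-\sum_j d_{ij}^2\bigr)=n^2$, not $=n$). Nothing is lost, because the argument never needs the minimal positive value of the cluster contribution --- only the equality case of $\sum_j d_{ij}^2\le nM_i$ as above. Second, the $n\le 2$ issue you flag is real but is a defect of the paper's Definition (which demands $n>2$ even though the subsequent discussion of indivisibility and the use in Theorem \ref{mainThm} clearly intend $n=2$ to be allowed); the paper's own proof is equally silent on it, so you need not dispatch it beyond a remark.
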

\begin{proof}
   We may assume that $d_{ij} \ne n$ for all $i,j$, as otherwise, we can reduce the problem to fewer clusters. Let $d=\sum_{i=1}^s a_i\delta_i$ be a minimal conical decomposition of $d$ and let $D=\{\delta_i:1\le i\le s\}$. Since $d$ is isotropic and $\langle d,\delta_u\rangle\leq 0$ for each tuple $u$, we must that $\langle d,\delta_v\rangle =0$ for each $\delta_v \in D$.
   
   Assume by contradiction there exists a cluster $i$ with two vertices $j$ and $j'$ such that $$\frac{d_{ij}}{w_{ij}}>\frac{d_{ij'}}{w_{ij'}}>0$$ 
   There must exist at least one  $\delta_{v} \in D$ whose support contains $v_{ij'}$.  Let $u$ be a tuple whose support agrees with $v$ everywhere except at cluster $i$ where it is $v_{ij}$. From Proposition \ref{PropertiesExtremalVectors}, it follows that
   $$\langle d, \delta_u \rangle>\langle d, \delta_v \rangle = 0$$ 
   This contradicts the fact that $\langle d, \delta_u \rangle \leq 0$ for every tuple $u$. Thus, the set $\{{d_{ij}}/{w_{ij}}: 1\le j\le r_i\}$ contains a unique nonzero value, say $q_i$, for all $i$. 
   Now, since $d$ is balanced, for each cluster $i$, we get
   \begin{equation}
       \label{eq:force}
   n=\sum_{j=1}^{r_i}w_{ij}d_{ij}=q_i\sum_{j\mid d_{ij} > 0}w_{ij}^2,
   \end{equation}
   so $q_i$ divides $n$. Observe that $q_i=n$ occurs only when $d_{ij}$ is nonzero for a unique $j$, and for this $j$, we must have $d_{ij}=n$ and $w_{ij}=1$, which we already assumed does not occur. Therefore, $q_i\leq \frac{n}{2}$ for every cluster $i$. Furthermore, for any $u$, $\langle d,\delta_u\rangle = 0$ implies that $\sum_{i=1}^m q_i = n(m-1)$ by Equation \eqref{eq:d-delta_u}. This implies that $n(m-1) \leq m\frac{n}{2}$, hence $m\leq 2$, from which we deduce that $1 \le m \le 2$. If $m=1$, then $q_1=0$, contradicting the fact that each $q_i$ is nonzero by definition. It follows that we must have $m=2$ and $q_1=q_2=n/2$. By Equation \eqref{eq:force}, this can happen only if $d$ is special isotropic. 
\end{proof}

\section{Simple representations of $A$}

In this section, we use the equivalence between $\rep(A)$ and the subcategory $\mathcal{C}$ together with the description of the $\theta$-stable balanced dimension vectors of $\Gamma$ to describe the finite dimensional simple $A$-modules.

In the following theorem, recall that the extremal vector $\delta_u$ is real if and only if the support of $u$ has at most one non-trivial weight. If the support of $u$ only has trivial weights, then $\delta_u$ is the minimal integral vector on the ray of $\delta_u$. If $u$ has support containing exactly one non-trivial weight $w$, then $w\delta_u$ is the minimal integral vector on the ray of $\delta_u$.

\begin{theorem} \label{mainThm}Let $n$ be a positive integer and let $d$ be a balanced dimension vector of $\Gamma$ with $d_0 = n$. Let $M$ is an $n$-dimensional representation of $A$ where $F(M)$ has dimension vector $d$, that is, $d_{ij}={\rm dim}_\K (E_{ij}M)/w_{ij}$. 
\begin{enumerate}
\item The dimension vector $d$ is $\theta$-stable if and only if either 
\begin{enumerate}
    \item $d$ is minimal on the ray of some real extremal ray $\delta_u$ or
    \item  $d$ is special isotropic indivisible, or
    \item $d$ is not special isotropic and
$$\sum_{i=1}^m{\rm max}(d_{i1}/w_{i1}, \ldots, d_{i r_i}/w_{i r_i}) \le (m-1)n.$$
\end{enumerate}
    \item If $M$ is simple, then $d$ is $\theta$-stable, hence satisfies (1).
    \item 
   If $d$ is $\theta$-stable, then the space of $n$-dimensional representations $M$ with $F(M)$ having dimension vector $d$ has a Zariski-dense and open subset of simple representations.
\end{enumerate}
\end{theorem}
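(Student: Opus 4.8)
The plan is to prove the three parts in order, using the categorical equivalence $F:\rep(A)\to\mathcal{C}$ (Proposition \ref{PropFunctorF}), the correspondence between simple $A$-modules and $\theta$-stable balanced representations (Proposition \ref{Prop: simple to stable}), and the combinatorial description of $\theta$-stable balanced dimension vectors assembled in the previous section (Theorem \ref{dw 6.4}, Corollary \ref{corollary: connected}, Lemma \ref{lem: isotropic}).

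For Part (1), I would start from Theorem \ref{dw 6.4}, which already states that a balanced $d$ with minimal conical decomposition $d=\sum_{i=1}^r a_i\delta_i$ is $\theta$-stable precisely when either (i) $d$ is the minimal integral vector on a real ray $\delta_u$ (condition (a) here), or (ii) $\langle d,\delta_u\rangle\le 0$ for every extremal $\delta_u$, the quiver $Q(\{\delta_1,\dots,\delta_r\})$ is path-connected, and $d$ is not both isotropic and divisible. The task is therefore to show that, in case $d$ is \emph{not} on a real ray, condition (ii) is equivalent to ``(b) or (c)'' of the theorem. First, Corollary \ref{corollary: connected} says the connectedness of $Q(\{\delta_1,\dots,\delta_r\})$ is automatic once $\langle d,\delta_u\rangle\le 0$ for all $u$, so that requirement can be dropped. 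Next, using Equation \eqref{eq:d-delta_u}, for every tuple $u\in U$ we have
$$\langle d,\delta_u\rangle=\sum_{i=1}^m\frac{d_{iu_i}}{w_{iu_i}}-n(m-1),$$
so $\langle d,\delta_u\rangle\le 0$ for \emph{all} $u$ is equivalent to the single inequality $\sum_{i=1}^m\max_j(d_{ij}/w_{ij})\le (m-1)n$, obtained by choosing $u_i$ to maximize $d_{iu_i}/w_{iu_i}$ in each cluster. This is exactly the inequality in (c). Finally I must handle the ``not both isotropic and divisible'' clause: if $d$ is not isotropic, the clause is vacuous and we land in (c); if $d$ is isotropic, Lemma \ref{lem: isotropic} forces $d$ to be special isotropic, and then the clause becomes ``$d$ indivisible'', which is case (b). One still has to check the converse directions — that a special isotropic indivisible vector genuinely satisfies (ii) (it does: it is isotropic, indivisible, and one computes $\langle d,\delta_u\rangle\le 0$ directly from Proposition \ref{PropertiesExtremalVectors}), and that a non-special-isotropic $d$ satisfying (c) is not ruled out by the divisibility clause — these are short verifications. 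Assembling these equivalences gives Part (1).

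Part (2) is immediate: if $M$ is simple, then $F(M)$ is $\theta$-stable by Proposition \ref{Prop: simple to stable}, so its dimension vector $d$ is a $\theta$-stable dimension vector in the sense of Definition \ref{def:stable_vec}, hence satisfies (1). For Part (3), suppose $d$ is $\theta$-stable. By King's theorem (cited after Definition \ref{def:stability}), the set of $\theta$-stable representations in $\rep(\Gamma,d)$ is Zariski-open and non-empty; call it $\mathcal U$. Since every $\theta$-stable representation of dimension $d$ has $\theta(\dim)=0$ and is in particular $\theta$-semistable, and since $d$ is balanced, every point of $\mathcal U$ actually lies in $\mathcal{C}_d$: indeed a $\theta$-stable $X$ of balanced dimension satisfies $\theta(\dim X)=0$ with no proper subrepresentation of $\theta$-degree $0$, and by Proposition \ref{FunctorImageSemiStable}.(2) the map $\Psi(X,i)$ is injective with domain and codomain of equal dimension (balancedness), hence an isomorphism, so $X\in\mathcal C_d$. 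Thus $\mathcal U\subseteq\mathcal C_d$ is a non-empty Zariski-open subset of $\mathcal C_d$, and transporting through the equivalence $G:\mathcal C_d\xrightarrow{\sim}\rep(A,d)$ we get that $G(\mathcal U)$ is a Zariski-dense open subset of $\rep(A,d)$; by Proposition \ref{Prop: simple to stable} each $G(X)$ with $X\in\mathcal U$ is a simple $A$-module. Since $\rep(A,d)$ is identified with the irreducible variety $\mathcal C_d$ (an open subset of the affine space $\rep(\Gamma,d)$), density follows, and the $n$-dimensional representations $M$ with $F(M)$ of dimension vector $d$ are exactly the points of $\rep(A,d)$, which proves Part (3).

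The main obstacle is the bookkeeping in Part (1): correctly matching the three mutually-exclusive cases (a),(b),(c) of the theorem against the two cases (i),(ii) of Theorem \ref{dw 6.4}, and in particular checking that the special isotropic vectors — which are exactly the obstruction points where the divisibility clause bites — are segregated into case (b) and removed from the scope of the inequality (c), while verifying that no non-special-isotropic vector satisfying (c) is accidentally excluded. Everything else is either a direct appeal to an already-proven proposition or a routine computation with the explicit formula \eqref{eq:d-delta_u} for $\langle d,\delta_u\rangle$.
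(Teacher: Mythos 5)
Your treatment of Parts (1) and (2) follows essentially the same route as the paper: reduce (1) to Theorem \ref{dw 6.4}, use Corollary \ref{corollary: connected} to discharge the connectedness hypothesis, convert $\langle d,\delta_u\rangle\le 0$ for all $u$ into the single max-inequality via Equation \eqref{eq:d-delta_u}, and use Lemma \ref{lem: isotropic} to segregate the isotropic case into the special isotropic indivisible vectors; Part (2) is Proposition \ref{Prop: simple to stable}. That bookkeeping is correct and matches the paper's proof.

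There is, however, a genuine error in your Part (3). You claim that every $\theta$-stable representation $X$ of balanced dimension vector $d$ lies in $\mathcal{C}_d$, i.e.\ that $\mathcal{U}\subseteq\mathcal{C}_d$. This is false: the paper's own example (Figure 1, for $\K^3*\K^3$) exhibits a $\theta$-stable representation with balanced dimension vector $(3,1,1,1,1,1,1)$ whose map $\Psi(X,2)$ is singular, so $X\notin\mathcal{C}$. Your justification is also circular: Proposition \ref{FunctorImageSemiStable}.(2) asserts injectivity of $\Psi(Y,i)$ only for subrepresentations $Y$ of a representation \emph{already known to be balanced} (the injectivity comes from $\Psi(Y,i)$ being a restriction of the isomorphism $\Psi(X,i)$), so it cannot be invoked to prove that $X$ itself is balanced. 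The statement of Part (3) is still true, but the correct argument is different: both $\mathcal{U}$ (by King's theorem, since $d$ is $\theta$-stable) and $\mathcal{C}_d$ (since $d$ is balanced, by the construction in Proposition \ref{prop:balanced-semistable}) are non-empty Zariski-open subsets of the irreducible affine space $\rep(\Gamma,d)$, hence their intersection $\mathcal{U}\cap\mathcal{C}_d$ is a non-empty open (hence dense) subset of $\mathcal{C}_d$, and transporting it through $G$ gives the required dense open set of simple modules in $\rep(A,d)$. You should replace the containment claim with this intersection argument.
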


\begin{proof}
Let us consider the first statement. It follows from Theorem \ref{dw 6.4} that $d$ is $\theta$-stable if and only if either
\begin{enumerate}[$(i)$]
        \item $d$ is the minimal integral vector on the extremal ray containing $\delta_u$ where the support of $\delta_u$ has at most one non-trivial weight, or
        \item $\langle d, \delta_u \rangle \leq 0$ for each extremal vector $\delta_u$, the quiver $Q(\{\delta_1, \ldots, \delta_r\})$ is (path) connected, and $d$ is not both isotropic and divisible.
    \end{enumerate}
Clearly, statement $(1)(a)$ is equivalent to $(i)$. Assume that $d$ is special isotropic indivisible.  In this case, a minimal conical decomposition for $d$ is given by $d = \delta_1 + \delta_2$ for two extremal vectors $\delta_1, \delta_2$ that are real Schur roots with $\langle \delta_i, \delta_j \rangle = -1$ for $1 \le i,j \le 2$. The quiver $Q(\{\delta_1, \delta_2\})$ has two vertices $\delta_1, \delta_2$ and two arrows $\delta_1 \to \delta_2$ and $\delta_2 \to \delta_1$ and hence is path-connected. To check that $\langle d, \delta_u \rangle \le 0$ for all extremal vector $\delta_u$, it suffices to prove this for $\delta_u \in \{\delta_1, \delta_2\}$. We have $\langle d, \delta_i \rangle=0$ for $1 \le i \le 2$. Hence, $(1)(b)$ implies $(ii)$. Now, assume that $(1)(c)$ holds. If $d$ is a real Schur root lying on an extremal ray, then $(i)$ is satisfied. If $d$ lies on an imaginary extremal ray, then $(ii)$ is satisfied.
So assume that $$\sum_{i=1}^m{\rm max}(d_{i1}/w_{i1}, \ldots, d_{i r_i}/w_{i r_i}) \le (m-1)n$$
while $d$ does not lie on an extremal ray and is not special isotropic. Then any minimal conical decomposition of $d$ has at least two summands. The given inequality ensures that $\langle d, \delta_u \rangle \le 0$ for all extremal vectors $\delta_u$. We have seen that the quiver of this conical decomposition is always path-connected in this case. As $d$ is not special isotropic, $(ii)$ is satisfied. 

Now, assume that $(ii)$ is satisfied. 
There is some $u$ such that $\langle d, \delta_u \rangle$ equals $\sum_{i=1}^m{\rm max}(d_{i1}/w_{i1}, \ldots, d_{i r_i}/w_{i r_i}) - n(m-1)$ by Equation \ref{eq:d-delta_u}, which gives $(1)(c)$. 

\medskip

Part $(2)$ follows from Proposition \ref{Prop: simple to stable}.

\medskip

Finally, for part $(3)$, when $d$ is $\theta$-stable, then $\rep(\Gamma,d)$ has a non-empty open set of $\theta$-stable representations; see \cite{Ki}. This translates to the desired statement for the space of $n$-dimensional representations $M$ of $A$ for which $F(M)$ has dimension vector $d$.
\end{proof}

\section{Modules in general position}
A representation $M$ in $\rep(\Gamma, d)$ is in \emph{general position} or is a \emph{general representation of dimension vector $d$} if $M$ belongs to a given proper open (hence dense) set of $\rep(\Gamma,d)$. When we say that a general representation of dimension vector $d$ satisfies a certain property, we mean by this that we can find a non-empty proper open set of $\rep(\Gamma, d)$ such that all representations in that open set satisfy the given property.

\medskip

We have proven that the dimension vector of any simple $A$-module is a $\theta$-stable balanced dimension vector and, conversely, given a $\theta$-stable balanced dimension vector $d$, a general representation of dimension vector $d$ is a simple object of $\mathcal{C}$, hence corresponds to a simple $A$-module. In this section, we analyse finite dimensional $A$-modules in general position, or equivalently, general representations of $\Gamma$ having balanced dimension vector.

We start with the following proposition, which guarantees the existence of non-semisimple representations, whenever we are working with a non-trivial free product.

\begin{proposition}
The algebra $A$ admits a non-semisimple finite dimensional representation if and only if $A$ is infinite dimensional.
\end{proposition}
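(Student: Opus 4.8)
The plan is to prove both directions of the equivalence, with the interesting content being that infinite-dimensionality forces a non-semisimple module to exist.

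\smallskip

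\emph{The easy direction.} First I would dispose of the implication ``$A$ finite dimensional $\Rightarrow$ every finite dimensional module is semisimple''. Since we are assuming $A$ has no trivial factor and $m\ge 1$, the only way $A$ can be finite dimensional is when $m=1$, i.e. $A=A_1$ is itself semisimple; then every module is completely reducible, so there is no non-semisimple module. (If one does not want to invoke the no-trivial-factor convention, the same argument works: $A=A_1*\cdots*A_m$ is finite dimensional only if it equals a single semisimple factor up to adjoining trivial factors, and a finite dimensional free product of two or more non-trivial semisimple algebras is always infinite dimensional, as one sees by exhibiting infinitely many linearly independent alternating words.) Either way, $A$ finite dimensional implies $A$ semisimple implies $\rep(A)$ is a semisimple category.

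\smallskip

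\emph{The main direction.} Now suppose $A$ is infinite dimensional; I must produce a finite dimensional $A$-module $M$ that is not semisimple. The cleanest route is through the quiver: by Proposition \ref{PropFunctorF} it suffices to produce a balanced representation $X\in\mathcal C$ that is not a direct sum of simple objects of $\mathcal C$. Since $A$ is infinite dimensional, it has at least two non-trivial factors, so $m\ge 2$ and the quiver $\Gamma$ has at least two clusters. The key geometric input is that $\Gamma$ is not a Dynkin quiver (it contains, for any two clusters $i\ne i'$, two source vertices $v_{ij}, v_{i'j'}$ both with arrows into the sink $v_0$; even in the smallest case this is the quiver $\bullet\to\bullet\leftarrow\bullet$ of type $A_3$ — but here one must be slightly careful, since $A_3$ itself is Dynkin and has a finite, semisimple-looking behaviour only in the sense of finite representation type). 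So the honest approach is: pick a balanced dimension vector $d$ that is \emph{not} $\theta$-stable — for instance take $d$ to be a sum $d=\delta+\delta'$ of two distinct minimal integral vectors on real extremal rays of $\mathbb Q_+\Sigma_B$ coming from choosing different vertices in cluster $1$ (and the same vertex elsewhere), scaled to be integral and balanced. By Proposition \ref{prop:balanced-semistable} such a $d$ is $\theta$-semistable, so $\mathcal C_d\ne\emptyset$; and by Proposition \ref{subrep} together with Proposition \ref{Prop: simple to stable}, a representation in $\mathcal C_d$ whose Jordan--Hölder factors in $\mathcal C$ are the two simple modules of dimension vectors $\delta,\delta'$ but which is a non-split extension is not semisimple. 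To guarantee such a non-split extension exists it is enough to check $\Ext^1_{\mathcal C}(S',S)\ne 0$ for the corresponding simple objects $S,S'$, and since $\mathcal C$ is hereditary with Euler form as in the excerpt, $\dim\Ext^1(S',S)=\dim\Hom(S',S)-\langle\delta',\delta\rangle = -\langle\delta',\delta\rangle > 0$ by Proposition \ref{PropertiesExtremalVectors}(2) (the inequality is strict precisely because $\delta,\delta'$ differ in only one cluster with... — here I would instead simply choose $\delta,\delta'$ so that $\langle\delta',\delta\rangle<0$, which Proposition \ref{PropertiesExtremalVectors} shows is the generic situation).

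\smallskip

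\emph{Assembling the argument.} Concretely: assuming $m\ge 2$, fix a cluster, say cluster $1$, with two distinct source vertices (if $r_1=1$ for all $i$, instead use two distinct clusters — the analysis of $\langle\delta_u,\delta_v\rangle$ in Proposition \ref{PropertiesExtremalVectors} still gives a pair with negative Euler form as long as $\Gamma$ has more than two source vertices or some weight exceeds $1$, which it does whenever $A$ is infinite dimensional). Choose $\delta_u\ne\delta_v$ with $\langle\delta_u,\delta_v\rangle<0$, let $d$ be the minimal balanced integral multiple of $\delta_u+\delta_v$, let $S_u,S_v\in\mathcal C$ be simple objects of these dimension vectors (they exist by Theorem \ref{mainThm}(3) and Theorem \ref{dw 6.4}, since real extremal rays whose support has at most one non-trivial weight are $\theta$-stable). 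Then $\Ext^1_{\mathcal C}(S_u,S_v)\ne 0$, so a non-split extension $0\to S_v\to X\to S_u\to 0$ exists in $\mathcal C$; applying $G$ gives $M=G(X)$, a finite dimensional $A$-module with a unique proper nonzero submodule $G(S_v)$, which therefore cannot be semisimple. The main obstacle I anticipate is the bookkeeping needed to guarantee, from ``$A$ infinite dimensional'', that one can always select two extremal rays with strictly negative Euler pairing — this amounts to checking that $\Gamma$ is never of type $A_1$ or $A_2$ or the single case $A_3$ with a subtle caveat; but since $A$ infinite dimensional forces $\Gamma$ to have at least two source vertices and at least one cluster of size $\ge 2$ \emph{or} a weight $\ge 2$, Proposition \ref{PropertiesExtremalVectors}(2)--(3) always supplies the needed pair (and in the genuinely borderline case $A=\K\mathbb Z_2 * \K\mathbb Z_2$, where $\Gamma$ is type $A_3$, one checks directly that the module $\K^2$ with $v_0=\K^2$ and the two subspaces chosen as distinct lines is the $2$-dimensional $A$-module on which the two involutions act by reflections in distinct lines — visibly indecomposable and not semisimple since the infinite dihedral group is not finite). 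This handles all cases.
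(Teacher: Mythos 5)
Your overall strategy -- pass to $\mathcal C$, find two stable dimension vectors with strictly negative Euler pairing, and realize a non-split extension between the corresponding stable objects -- is essentially the paper's argument for two of its three cases. But there is a genuine gap: your claim that Proposition \ref{PropertiesExtremalVectors} ``always supplies the needed pair'' of \emph{distinct} extremal rays with $\langle\delta_u,\delta_v\rangle<0$ is false when every cluster has $r_i=1$, i.e.\ when each $A_i$ is a single matrix algebra ${\rm Mat}_{w_{i1}}(\K)$ with $w_{i1}\ge 2$ (for example $A={\rm Mat}_2(\K)*{\rm Mat}_2(\K)$). In that situation $U$ is a singleton, so $E$ consists of a \emph{single} extremal ray and there is no pair $\delta_u\ne\delta_v$ at all; your fallback ``use two distinct clusters'' does not help, since two source vertices in different clusters still determine only one tuple $u$. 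The paper closes exactly this case (and, more generally, the case of two non-trivial weights in distinct clusters) with a different mechanism that your proof never invokes: the unique extremal ray is \emph{imaginary} by Proposition \ref{PropertiesExtremalVectors}(3), so a general stable representation $M$ on it has ${\rm End}(M)=\K$ and ${\rm dim\,Ext}^1(M,M)=-\langle\delta,\delta\rangle>0$, and a non-split \emph{self}-extension $0\to M\to E\to M\to 0$ yields an indecomposable non-simple object of $\mathcal C$. You need this self-extension argument to cover the all-$r_i=1$ case; once it is added, your case analysis does cover everything, since whenever some $r_i\ge 2$ one can indeed produce two distinct tuples $u,v$ that either differ in two clusters or agree at a vertex of non-trivial weight, forcing $\langle\delta_u,\delta_v\rangle<0$.

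Two smaller points. First, your justification for the existence of stable objects on the chosen rays only covers real extremal rays (``at most one non-trivial weight''); for imaginary extremal rays you need the other branch of Theorem \ref{dw 6.4}, which the paper notes is automatically satisfied by integral points of an imaginary extremal ray. Second, your aside about $A=\K\mathbb Z_2*\K\mathbb Z_2$ is off: there $\Gamma$ is the four-subspace quiver of type $\widetilde{\mathbb D}_4$, not $A_3$, and that case is \emph{not} borderline for your construction -- the rays $\delta_{(1,1)}$ and $\delta_{(2,2)}$ differ in both clusters and already satisfy $\langle\delta_{(1,1)},\delta_{(2,2)}\rangle=-1<0$, so the general argument applies; also, two reflections of $\K^2$ in distinct lines typically generate an \emph{irreducible} representation, so the explicit module you describe is not automatically non-semisimple (one needs a shared eigenline without a shared complement). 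The easy direction of your proof agrees with the paper's.
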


\begin{proof}
 Clearly, $A$ is infinite dimensional if and only if $m > 1$, that is, $A$ has at least two (non-trivial) factors. If $A$ has a single factor $A_1$ which is semisimple, then all finite dimensional modules are semisimple. 
 Let us now assume that $A$ is infinite dimensional. If we have at least two non-trivial weights in distinct clusters, then there is a $\theta$-stable dimension vector $\delta$ lying on an extremal ray that is imaginary. Let $M$ be a general representation of dimension vector $\delta$. In particular, the endomorphism algebra of $M$ is trivial. Since $${\rm dim}{\rm Hom}(M,M) - {\rm dim}{\rm Ext}^1(M,M) = \langle \delta,\delta \rangle < 0,$$ 
we deduce that $M$ has self-extensions. Let
$$0 \to M \to E \to M \to 0$$
be a non-split self-extension. Since the endomorphism algebra of $M$ is trivial, the middle term $E$ has to be indecomposable. Since $M$ lies in $\mathcal{C}$, which is closed under extensions, $E$ lies in $\mathcal{C}$. We have just constructed an indecomposable $\theta$-semistable representation that is not $\theta$-stable. Through the equivalence between $\rep(A)$ and $\mathcal{C}$, this $E$ corresponds to an indecomposable $A$-module that is not simple. Now, assume that only one $A_i$ has a non-trivial weight (say $A_1$). Consider $A_2$, which has only trivial weights.
In this case, we can find two extremal vectors $\delta_1, \delta_2$ both supported at a vertex having a given non-trivial weight in $A_1$, at distinct vertices having trivial weights in $A_2$, and at the same vertex having trivial weight in all other $A_i$. We have $\langle \delta_1, \delta_2 \rangle < 0$, which implies that if we take $M_i$ a general representation of dimension vector $\delta_i$, then ${\rm Hom}(M_2, M_1) = {\rm Hom}(M_1, M_2)=0$ while ${\rm Ext}^1(M_1, M_2)$ has dimension $-\langle \delta_1, \delta_2 \rangle > 0$. Similar to the first case, we let
$$0 \to M_2 \to E \to M_1 \to 0$$
be a non-split self-extension. Using that ${\rm Hom}(M_2, M_1) = 0$, we get that $E$ is indecomposable, and we can derive the same conclusion as before on the existence of an indecomposable non-simple $A$-module. Finally, we consider the case where all $A_i$ have trivial weights. In this case, we can find two extremal vectors $\delta_1, \delta_2$ whose support coincide in $m-2$ places. We can check that $\langle \delta_1, \delta_2 \rangle < 0$, and the proof is exactly as in the previous case.
\end{proof}

It follows that when $A$ is infinite dimensional, there exist indecomposable representations that are not simple. However, we will see that an indecomposable $A$-module that is in general position is actually simple. We will prove that a representation of $A$ that is in general position is completely reducible, that is, is isomorphic to a finite direct sum of simple representations.

\medskip

We first need to recall some key notions on $\theta$-stable decompositions. Every finite dimensional $A$-module $M$ has a Jordan--Hölder series. Due to the equivalence between ${\rm rep}(A)$ and $\mathcal{C}$, a Jordan-Hölder series 
$$0 = M_0 \subset M_1 \subset \cdots \subset M_t = M$$ for $M$ corresponds to a chain
$$0 = F(M_0) \subset F(M_1) \subset \cdots \subset F(M_t) = F(M)$$
of subrepresentations of $F(M)$ such that all $F(M_i)/F(M_{i-1})$ are simple in $\mathcal{C}$, or equivalently, are $\theta$-stable in $\mathcal{C}$. Such a composition series for $F(M)$ is closely tied to the notion of $\theta$-stable decomposition. Let $d$ be the dimension vector of $F(M)$. Then $d$ is $\theta$-semistable. Following \cite{DW}, one has a \emph{$\theta$-stable decomposition}
$$d = c_1d_1 + \cdots + c_sd_s$$
of $d$ where the $c_i$ are positive integers such that a general representation $Z$ of dimension vector $d$ admits a filtration of length $t=\sum c_i$ where the subquotients are all $\theta$-stable with exactly $c_i$ such subquotients of dimension vector $d_i$.  The vectors $d_i$ are Schur roots and whenever $d_i$ is imaginary, we have $c_i=1$; see \cite{DW}.

\medskip

Proposition \ref{subrep} guarantees that the stable decomposition of a balanced $\theta$-semistable dimension vector will only use $\theta$-stable dimension vectors that are also balanced as summands. Hence, if $F(M)$ is in general position, one can find the $\theta$-stable composition factors of $F(M)$, and hence the Jordan-Hölder composition factors for $M$. An inductive approach to find the $\theta$-stable decomposition of a dimension vector $d$ is desribed in Remark 6.5 in \cite{DW}.

\begin{proposition}
Let $M$ be a finite dimensional left $A$-module in general position. Then $M$ is completely reducible, that is, $M$ is a direct sum of simple modules.
\end{proposition}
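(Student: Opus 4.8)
The plan is to pass through the equivalence $F: \rep(A)\to\mathcal{C}$ of Proposition \ref{PropFunctorF} and prove the equivalent statement that a general representation $X$ of any balanced dimension vector $d$ is a direct sum of $\theta$-stable representations. This suffices: the $\theta$-stable balanced representations are precisely the simple objects of $\mathcal{C}$ (by Proposition \ref{Prop: simple to stable} together with the equivalence), and since $\mathcal{C}_d$ is nonempty, open, hence dense in $\rep(\Gamma,d)$ for balanced $d$, the hypothesis that $M$ is in general position translates to the statement that $X=F(M)$ is a general representation of $\Gamma$ of dimension $d$; so once $X\cong\bigoplus_j S_j$ with each $S_j$ simple in $\mathcal{C}$, applying the exact equivalence $G$ gives $M\cong G(X)\cong\bigoplus_j G(S_j)$, a direct sum of simple $A$-modules.

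First I would bring in the $\theta$-stable decomposition $d=\sum_{i=1}^{s}c_id_i$. By the discussion surrounding Proposition \ref{subrep}, each $d_i$ is a balanced $\theta$-stable Schur root, $c_i=1$ whenever $d_i$ is imaginary, and a general representation of dimension $d$ admits a filtration whose subquotients are general $\theta$-stable representations, exactly $c_i$ of them of dimension $d_i$ (and for an isotropic $d_i$ the $c_i$ subquotients of dimension $d_i$ may be taken pairwise non-isomorphic). The crux is then to upgrade this filtration of $X$ to a direct sum decomposition.

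Splitting the filtration reduces, step by step, to the vanishing of ${\rm Ext}^1$ between its subquotients. For two subquotients of the same dimension $d_i$: if $d_i$ is real this is ${\rm Ext}^1$ of the unique exceptional representation of dimension $d_i$ with itself, which is $0$; if $d_i$ is isotropic the two are non-isomorphic $\theta$-stable representations of slope $0$, so ${\rm Hom}=0$ between them and hence $\dim{\rm Ext}^1=-\langle d_i,d_i\rangle=0$; if $d_i$ is imaginary there is only one subquotient of that dimension. For subquotients of distinct dimensions $d_i\neq d_j$: a nonzero morphism between $\theta$-stable representations of the same slope is an isomorphism, so ${\rm Hom}$ vanishes generically in both directions, whence the generic dimension of ${\rm Ext}^1$ between representations of dimensions $d_i$ and $d_j$ equals $-\langle d_i,d_j\rangle$ in either order, the two values agreeing because $\langle\cdot,\cdot\rangle$ is symmetric on balanced vectors (Proposition \ref{PropertiesExtremalVectors}.(1)). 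Thus everything comes down to the orthogonality statement $\langle d_i,d_j\rangle=0$ for distinct summands $d_i,d_j$ of the $\theta$-stable decomposition of a balanced vector.

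This orthogonality is the main obstacle, since in general the summands of a $\theta$-stable decomposition need only satisfy $\langle d_i,d_j\rangle\le 0$. The mechanism is that the $\theta$-stable decomposition can be regarded as the canonical (Kac--Schofield) decomposition inside the abelian length category of slope-$0$ $\theta$-semistable representations of $\Gamma$ (a category closed under extensions in $\rep(\Gamma)$, whose simple objects are exactly the $\theta$-stable representations), so that by the construction of that canonical decomposition the generic extension between distinct summands splits; equivalently, if $\langle d_i,d_j\rangle<0$ then a general representation of dimension $d_i+d_j$ is a non-split extension, and the inductive procedure computing $\theta$-stable decompositions (Remark 6.5 of \cite{DW}) would then absorb $d_i+d_j$ into coarser $\theta$-stable pieces instead of leaving $d_i$ and $d_j$ as separate summands. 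I would make this precise by carrying out that recursion in the balanced setting, using Theorem \ref{dw 6.4} and Proposition \ref{PropertiesExtremalVectors} to control the Euler pairings that arise. Granting the orthogonality, the filtration of $X$ splits into a direct sum of $\theta$-stable representations; each summand has slope $0$, hence is balanced by Proposition \ref{subrep} and is a simple object of $\mathcal{C}$, so $X$ is semisimple in $\mathcal{C}$ and $M\cong G(X)$ is a direct sum of simple $A$-modules, as claimed.
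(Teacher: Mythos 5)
Your overall architecture is the same as the paper's: transport the problem to $\mathcal{C}$ via $F$, take the $\theta$-stable decomposition $d=\sum_i c_i d_i$, observe that splitting the generic filtration reduces to $\Ext^1$-vanishing between its subquotients, dispose of the equal-dimension cases by the real/isotropic/imaginary trichotomy, and reduce the distinct-dimension cases to the orthogonality $\langle d_i,d_j\rangle=0$ for $i\neq j$. All of that is correct and matches the paper's proof.

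The gap is at the orthogonality step, which you rightly identify as the crux but do not actually close. The mechanism you propose --- that the $\theta$-stable decomposition ``can be regarded as the canonical decomposition inside the abelian length category of slope-$0$ semistable representations, so that by construction the generic extension between distinct summands splits'' --- is not a valid general principle: for an arbitrary quiver and stability parameter a general semistable representation is only a filtration, not a direct sum, of stables, and distinct summands of a $\sigma$-stable decomposition can indeed satisfy $\langle d_i,d_j\rangle<0$ in one direction. The assertion that generic extensions between distinct summands split is essentially the statement being proved, so as written the argument is circular, and the fallback (``I would make this precise by carrying out that recursion in the balanced setting'') leaves the real work undone. The paper closes this gap with far less machinery: by \cite{DW} the summands can be ordered so that $\langle d_i,d_j\rangle=0$ for all $i<j$, and the symmetry of the Euler form on balanced dimension vectors (Proposition \ref{PropertiesExtremalVectors}.(1)) --- a fact you already invoke elsewhere in your argument --- then forces $\langle d_j,d_i\rangle=0$ as well. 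Substituting that two-line observation for the recursion sketch completes your proof.
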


\begin{proof} Let $M$ be as in the statement, so that $F(M)$ is in general position. Let $d$ be the dimension vector of $F(M)$. Let $d_1, d_2, \ldots, d_r$ be the dimension vectors appearing in the $\theta$-stable decomposition of $d$. As previously discussed, we know that the vectors $d_i$ are balanced and $\theta$-stable. It follows from \cite{DW} that the $d_i$ can be ordered in such a way that $\langle d_i, d_j \rangle = 0$ for $i < j$. Using the fact that the Euler form is symmetric for balanced dimension vectors, it follows that $\langle d_i, d_j \rangle = 0$ for $i \ne j$. Note that since $F(M)$ is in general position, all the subquotients $F(M_i)/F(M_{i-1})$ of a Jordan-Hölder filtration are also in general position. If $Z_i$ is a general representation of dimension vector $d_i$, then for $i \ne j$, we have $\Hom(Z_i, Z_j)=0$ since $Z_i, Z_j$ are non-isomorphic $\theta$-stable representations. From $\langle d_i, d_j \rangle =0$, we get $0 = {\rm dim}\Hom(Z_i, Z_j) - {\rm dim}\Ext^1(Z_i, Z_j)$, which means that $\Ext^1(Z_i, Z_j)=0$. The only way for two $\theta$-stable subquotients $F(M_i)/F(M_{i-1})$ and $F(M_j)/F(M_{j-1})$ from above to admit an extension is when they have the same dimension vector, which needs to be imaginary. Indeed, if $Z_{i1}, Z_{i2}$ are two general representations of dimension vector $d_i$, then $\Ext^1(Z_{i1}, Z_{i2})=0$ unless $d_i$ is imaginary. But if $d_i$ is imaginary, we know that its coefficient in the $\theta$-stable decomposition of $d$ is one. Hence, when $F(M)$ is in general position, we get that
$$F(M) \cong \bigoplus_{i=1}^rF(M_i)/F(M_{i-1})$$
is a direct sum of $\theta$-stable representations in $\mathcal{C}$. 
\end{proof}

An $A$-module $M$ is a \emph{Schur} module (also called a \emph{brick}) if ${\rm End}_A(M) = k$. If $M$ is Schur, then $F(M)$ is a Schur representation, hence the dimension vector of $F(M)$ is a Schur root. If $\rep(\Gamma,d)$ contains a brick, then it contains an open set of bricks, since the bricks are the points of $\rep(\Gamma,d)$ with a minimal dimensional ($1$-dimensional here) stabilizer subgroup. Conversely, if $d$ is a balanced Schur root, then a general representation of $\rep(\Gamma, d)$ is a Schur representation in $\mathcal{C}$. Therefore, we get the following.

\begin{corollary}\label{coro:schur}
The balanced Schur roots are exactly the balanced $\theta$-stable dimension vectors.
\end{corollary}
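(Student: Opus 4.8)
The plan is to show the two inclusions between the set of balanced Schur roots and the set of balanced $\theta$-stable dimension vectors. The forward inclusion is the easy half: given a balanced $\theta$-stable dimension vector $d$, the general representation in $\rep(\Gamma,d)$ is $\theta$-stable (by \cite{Ki}), and a $\theta$-stable representation has endomorphism algebra $\K$ since any nonzero endomorphism of a stable object in an abelian category is an isomorphism, and over the algebraically closed field $\K$ its only eigenvalue gives a scalar. Hence the general representation of dimension vector $d$ is a brick, so $d$ is a Schur root; and it is balanced by hypothesis. This part I would state in one or two sentences.

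For the converse, suppose $d$ is a balanced Schur root. First I would use the preceding discussion to pass to representations: since $d$ is balanced, every representation with dimension vector $d$ lying in the Zariski-open set $\mathcal{C}_d$ corresponds under $G$ to an $A$-module, and $\mathcal{C}_d$ is nonempty and open in $\rep(\Gamma,d)$. Because $d$ is a Schur root, $\rep(\Gamma,d)$ contains a brick, hence (as noted in the paragraph preceding the corollary, using that bricks are exactly the points with $1$-dimensional stabilizer, an open condition) it contains a dense open set of bricks; intersecting this open set with $\mathcal{C}_d$ — nonempty since both are dense open in the irreducible affine space $\rep(\Gamma,d)$ — produces a brick lying in $\mathcal{C}$. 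A brick in $\mathcal{C}$ corresponds to a Schur $A$-module $M$, and since $F(M)$ lies in $\mathcal{C}$, the category $\mathcal{C}$ is abelian (Proposition \ref{PropFunctorF}), and $F$ is exact, a proper nonzero subrepresentation of $F(M)$ in $\mathcal{C}$ would pull back to a proper nonzero submodule of $M$; but more directly, one argues that a brick that is $\theta$-semistable and lies in a hereditary abelian category with $\mathcal{C}$ closed under subobjects must be $\theta$-stable. Concretely: $F(M)$ is $\theta$-semistable by Proposition \ref{FunctorImageSemiStable}; if it had a proper nontrivial subrepresentation $Y$ with $\theta(\dim Y)=0$, then $Y \in \mathcal{C}$ by Proposition \ref{subrep}, and by general representation we may take the brick to be general, so that its $\theta$-stable decomposition has a single imaginary summand or is a sum of distinct stables — here I would instead invoke the $\theta$-stable decomposition machinery recalled just above: the general representation of dimension vector $d$ has a $\theta$-stable decomposition $d = c_1 d_1 + \cdots + c_s d_s$, and being a brick forces $s=1$ and $c_1=1$, i.e. $d = d_1$ is itself $\theta$-stable.

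The cleanest route, which I would actually write, is: by Corollary \ref{corollary: connected} and the analysis of this section, the general representation $Z$ of the balanced dimension vector $d$ has a $\theta$-stable decomposition into balanced $\theta$-stable summands, and if $d$ is moreover a Schur root then $\End(Z) = \K$ for general $Z$; but if the decomposition had more than one summand (counted with multiplicity) then, since the distinct $\theta$-stable summands have vanishing $\Ext^1$ between them as shown in the proof of the previous proposition, $Z$ would decompose as a nontrivial direct sum (if two summands are non-isomorphic) or contain a repeated isotropic/real summand (impossible since imaginary summands occur with multiplicity one and repeating a real or isotropic Schur root is not a Schur root), contradicting $\End(Z)=\K$. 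Hence $s=1$, $c_1=1$, and $d$ is $\theta$-stable. The main obstacle I anticipate is making the last dichotomy fully rigorous — ruling out a Schur root that is a nontrivial multiple of a smaller Schur root, or a sum of two distinct $\theta$-stable roots with a nonzero homomorphism between their general representations — but all the needed inputs (symmetry of the Euler form on balanced vectors from Proposition \ref{PropertiesExtremalVectors}, vanishing of $\Hom$ between non-isomorphic $\theta$-stables, and the structure of $\theta$-stable decompositions from \cite{DW}) are already available, so this amounts to assembling them carefully rather than proving anything genuinely new.
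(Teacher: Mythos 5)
Your argument is correct and follows essentially the same route as the paper: the forward direction is the standard ``stable implies brick'' observation, and the converse combines the openness of $\mathcal{C}_d$ and of the locus of bricks with the $\theta$-stable decomposition and the vanishing of $\Ext^1$ between its summands (i.e.\ the complete-reducibility proposition proved just before the corollary) to force the decomposition of a balanced Schur root to consist of a single $\theta$-stable summand. No gaps; this is the paper's intended proof, merely written out more explicitly.
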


Recall that if $d$ is a dimension vector of $\Gamma$, then there are dimension vectors $d_1, \cdots, d_r$ such that a general representation $M$ of dimension vector $d$ decomposes as a finite direct sum of indecomposable representations $M = M_1 \oplus \cdots \oplus M_r$ such that the dimension vector of $M_i$ is $d_i$. This yields the \emph{canonical decomposition}
$$d = d_1 + \cdots + d_r$$
of $d$ as defined by Kac in \cite{Kac}. We note that all $d_i$ are Schur roots. By the above analysis, we have the following.

\begin{proposition}
The canonical decomposition of a balanced dimension vector agrees with its $\theta$-stable decomposition.
\end{proposition}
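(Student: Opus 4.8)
The plan is to show that the two decompositions coincide by identifying both with a single decomposition into $\theta$-stable balanced Schur roots with orthogonality relations, using what has already been established. Recall that the canonical decomposition $d = d_1 + \cdots + d_r$ records the dimension vectors of the indecomposable summands of a general representation $M$ of dimension vector $d$, while the $\theta$-stable decomposition records the $\theta$-stable subquotients of a general $M$. The key input is the preceding proposition: a general $A$-module $M$ (equivalently, a general representation in $\mathcal{C}_d$ when $d$ is balanced) is completely reducible, i.e. $M \cong \bigoplus_i Z_i$ with each $Z_i$ a $\theta$-stable representation in $\mathcal{C}$, and the proof of that proposition shows moreover that the summands $Z_i$ are pairwise non-isomorphic unless their common dimension vector is imaginary (in which case the coefficient in the $\theta$-stable decomposition is $1$, so that summand occurs only once).

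First I would set up the translation: since $d$ is balanced, a general representation of $\Gamma$ of dimension vector $d$ lies in the open set $\mathcal{C}_d$ (this is where one must note $\mathcal{C}_d$ is open and nonempty, as recorded after Proposition~\ref{PropFunctorF}), so ``general representation of $\Gamma$'' and ``general representation in $\mathcal{C}$'', equivalently ``general $A$-module'', agree. Hence the canonical decomposition of $d$ may be computed from a general $M \in \mathcal{C}_d$. Next I would invoke the previous proposition: such an $M$ decomposes as a direct sum of $\theta$-stable representations $Z_i$ in $\mathcal{C}$. Each $Z_i$, being $\theta$-stable, is in particular a Schur representation (its endomorphism ring is $\K$ by Corollary~\ref{coro:schur} together with the standard fact that a $\theta$-stable representation is a brick), hence indecomposable. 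Therefore the decomposition $M = \bigoplus_i Z_i$ \emph{is} a decomposition into indecomposables, and by uniqueness of the Krull--Schmidt decomposition it is (up to reordering) the canonical decomposition; that is, the multiset $\{\dim Z_i\}$ is exactly the canonical decomposition $\{d_1, \ldots, d_r\}$.

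On the other hand, the same decomposition $M = \bigoplus_i Z_i$ exhibits a filtration of $M$ whose subquotients are the $\theta$-stable $Z_i$, so the multiset $\{\dim Z_i\}$ also records the $\theta$-stable decomposition of $d$ --- here one uses that the $\theta$-stable decomposition is well-defined (the same for all general $M$, by \cite{DW}), so it must coincide with what this particular general $M$ produces. Comparing, the $\theta$-stable decomposition and the canonical decomposition of $d$ are the same multiset of dimension vectors, grouped with the same multiplicities; this is the assertion. I would also remark that the imaginary-root coefficients automatically match, since in both decompositions an imaginary Schur root appears with coefficient exactly $1$ (for the canonical decomposition this is Kac's theorem, for the $\theta$-stable decomposition it was recalled in the excerpt).

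The main obstacle is essentially bookkeeping rather than a deep difficulty: one must be careful that ``general position'' is being taken with respect to the same open dense subset of $\rep(\Gamma, d)$ in all three notions (canonical decomposition, $\theta$-stable decomposition, and complete reducibility), so that a single representative $M$ witnesses all of them simultaneously --- this works precisely because a finite intersection of dense open sets is dense open, and because $d$ is balanced so that $\mathcal{C}_d$ is itself dense open. The only other point requiring a word of justification is that a $\theta$-stable representation is a brick (hence indecomposable), which is standard from King's theory \cite{Ki} and is already used implicitly via Corollary~\ref{coro:schur}.
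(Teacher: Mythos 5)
Your argument is correct and is exactly the reasoning the paper intends: it leaves this proposition as an immediate consequence ("by the above analysis") of the complete-reducibility result, and your write-up simply makes explicit the steps --- a general representation of a balanced $d$ lies in $\mathcal{C}_d$, decomposes as a direct sum of $\theta$-stable (hence brick, hence indecomposable) summands, and this single decomposition witnesses both the canonical and the $\theta$-stable decompositions. No gaps; your care about taking a single general representative in the intersection of the relevant open sets is exactly the right bookkeeping.
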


\section{Description of all modules and representation type}

Now that we know that left $A$-modules in general position are semisimple, we may ask how to build the other modules. Each finite dimensional module has finitely many simple composition factors. Hence, we can start by fixing a finite set $\mathcal{S} = \{S_1, \ldots, S_r\}$ of finite dimensional simple $A$-modules, and ask for the modules having their simple composition factors in $\mathcal{S}$. We denote by ${\rm wide}(\mathcal{S})$ the abelian extension-closed subcategory of ${\rm rep}(A)$ containing $\mathcal{S}$. It is clear that the simple objects in ${\rm wide}(\mathcal{S})$ are the ones in $\mathcal{S}$.  Now, this category ${\rm wide}(\mathcal{S})$ is a Hom-finite hereditary abelian $\K$-linear category. We let $\delta_i$ denote the dimension vector of $S_i$, and we let $Q(\mathcal{S})$ denote the quiver that has vertices labeled by the $S_i$'s, has $-\langle \delta_i, \delta_j \rangle$ many arrows from $S_i$ to $S_j$ for each pair $i, j$ with $i \ne j$, and has $1 - \langle \delta_i, \delta_i \rangle$ many loops at $S_i$ for each $i$. In other words, the number of arrows from $S_i$ to $S_j$ is just the dimension of $\Ext^1(S_i, S_j)$.

\begin{proposition}
The subcategory ${\rm wide}(\mathcal{S})$ is equivalent to the category of nilpotent representations of $Q(\mathcal{S})$.
\end{proposition}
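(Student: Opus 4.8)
The plan is to construct an equivalence between $\mathrm{wide}(\mathcal{S})$ and the category $\mathrm{nilrep}(Q(\mathcal{S}))$ of finite-dimensional nilpotent representations of $Q(\mathcal{S})$ by exploiting the general structure theory of Hom-finite hereditary abelian $\K$-linear categories with finitely many simple objects. Since $\mathrm{wide}(\mathcal{S})$ is abelian, extension-closed in the hereditary category $\mathrm{rep}(A)$, and hence hereditary itself (as noted in the excerpt), and since its simple objects are exactly $S_1,\dots,S_r$, every object of $\mathrm{wide}(\mathcal{S})$ has a finite composition series with factors among the $S_i$. The first step is to show that $\mathrm{wide}(\mathcal{S})$ has enough projectives — or more precisely, to identify it with the category of finite-dimensional modules over a suitable (possibly non-unital, locally finite-dimensional) ring, namely the "basic algebra" attached to the simples $S_i$ and the $\Ext$-spaces between them. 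Concretely, I would let $P_i$ be a projective cover of $S_i$ in the pro-completion (or work with the appropriate category of finitely presented objects) and show that $\bigoplus_i P_i$ is a progenerator, so that $\mathrm{wide}(\mathcal{S})$ is equivalent to finite-dimensional modules over $\Lambda := \mathrm{End}(\bigoplus_i P_i)^{\mathrm{op}}$.

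The second step is to identify $\Lambda$ with (a completion of) the path algebra $\K Q(\mathcal{S})$. Because the category is hereditary, the first syzygy of $S_i$ is determined by $\Ext^1$ between simples: the radical of $P_i$ decomposes so that $\Ext^1(S_i, S_j)$ controls the multiplicity of $P_j$ in $\mathrm{rad}(P_i)/\mathrm{rad}^2(P_i)$. Since by definition $Q(\mathcal{S})$ has exactly $\dim_\K \Ext^1(S_i,S_j)$ arrows from $S_i$ to $S_j$ (including loops when $i=j$), this shows that $\Lambda$ is the completed path algebra $\widehat{\K Q(\mathcal{S})}$ — no relations appear precisely because the category is hereditary, so $\Ext^2$ vanishes and there are no obstructions to lifting. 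Here one must be a little careful when $Q(\mathcal{S})$ has loops or oriented cycles: the path algebra is infinite-dimensional, the projectives $P_i$ are genuinely infinite-dimensional pro-objects, and the correct statement is that finite-dimensional objects of $\mathrm{wide}(\mathcal{S})$ correspond to finite-dimensional $\widehat{\K Q(\mathcal{S})}$-modules, which are exactly the finite-dimensional \emph{nilpotent} representations of $Q(\mathcal{S})$ (a finite-dimensional module over the completed path algebra is automatically annihilated by all sufficiently long paths). This is the source of the word "nilpotent" in the statement and is why one cannot simply say $\K Q(\mathcal{S})$-modules.

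The third step is to assemble these identifications into the desired equivalence and check naturality: the functor $\mathrm{wide}(\mathcal{S}) \to \mathrm{nilrep}(Q(\mathcal{S}))$ sends $X$ to the tuple $\big(\Hom(P_i, X)\big)_i$ with the arrow actions induced by composition, and one verifies it is exact, fully faithful, and essentially surjective using the progenerator property from step one together with the structure of $\Lambda$ from step two. Alternatively, and perhaps more cleanly for a paper of this type, I would invoke a known general result (e.g.\ along the lines of work on hereditary length categories, or the standard fact that a Hom-finite hereditary abelian category with finitely many simples and enough projectives is the category of finite-dimensional nilpotent representations of its "$\Ext$-quiver") and then just note that the $\Ext$-quiver of $\mathrm{wide}(\mathcal{S})$ is exactly $Q(\mathcal{S})$ by construction, using the representation-theoretic interpretation of the Euler form recalled earlier: $\langle \delta_i,\delta_j\rangle = \dim\Hom(S_i,S_j) - \dim\Ext^1(S_i,S_j)$, and $\Hom(S_i,S_j)=0$ for $i\ne j$ while $\mathrm{End}(S_i)=\K$ since the $S_i$ are simple (hence Schur in $\mathcal{C}$).

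\textbf{Main obstacle.} The delicate point — and the one I would spend the most care on — is the handling of loops and oriented cycles in $Q(\mathcal{S})$, i.e.\ the fact that $\mathrm{wide}(\mathcal{S})$ need not have finite global-dimension-type finiteness making the projectives finite-dimensional. One must either (a) work systematically in a pro-completion / with the completed path algebra and argue that restricting to finite-dimensional objects on both sides yields the stated equivalence, checking that "finite length with factors in $\mathcal{S}$" matches "finite-dimensional nilpotent representation"; or (b) avoid projectives entirely and instead build the functor by choosing, for each $X \in \mathrm{wide}(\mathcal{S})$, a radical-type filtration and reconstructing the representation of $Q(\mathcal{S})$ from the layers and the connecting $\Ext^1$-classes, then showing independence of choices up to canonical isomorphism. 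Option (a) is cleaner to state but requires a paragraph justifying the completed-path-algebra formalism; I would go with (a), citing the relevant hereditary-category machinery, since the rest of the verification (exactness, full faithfulness) is then formal.
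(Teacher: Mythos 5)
Your strategy is sound and reaches the same conclusion, but it routes through different machinery than the paper. The paper avoids pro-completions and infinite-dimensional projectives entirely: it truncates ${\rm wide}(\mathcal{S})$ by Loewy length, observes that each truncation ${\rm wide}(\mathcal{S})_t$ (for $t\ge 2$) is an abelian, Hom- and Ext-finite category with finitely many simples and bounded Loewy length, and invokes a theorem of Gabriel to identify it with the module category of a finite dimensional algebra; since $t\ge 2$ the $\Ext^1$-spaces between simples are unchanged by truncation, that algebra is a quotient $\K Q(\mathcal{S})/I_t$ of the path algebra, and letting $t\to\infty$ exhibits ${\rm wide}(\mathcal{S})$ as the nilpotent representations of $Q(\mathcal{S})$. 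Your version instead builds projective covers $P_i$ in a pro-completion and identifies $\End(\bigoplus_i P_i)^{\rm op}$ with the completed path algebra, using vanishing of $\Ext^2$ to rule out relations. Both are Gabriel-type recognition arguments, and both ultimately need hereditarity of ${\rm wide}(\mathcal{S})$ (inherited from $\rep(A)$ via extension-closure) to see that no relations survive in the limit --- a point the paper's own proof leaves largely implicit when it asserts that exactly the nilpotent modules occur. What the truncation buys the paper is that every object in sight stays finite dimensional, so only the classical finite-dimensional statement is needed; what your route buys is a single global identification and a cleaner explanation of where ``nilpotent'' comes from, namely that finite-dimensional modules over the completed path algebra are automatically killed by long paths. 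Your main deferred step --- justifying the completed-path-algebra formalism and the existence of the progenerator --- is genuine work but standard for Hom-finite length categories, so I would count your proposal as correct in outline, with that step playing the same role that the citation of Gabriel plays in the paper's proof.
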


\begin{proof}
Let $t$ be a positive integer with $t \ge 2$. Consider the full subcategory ${\rm wide}(\mathcal{S})_t$ of ${\rm wide}(\mathcal{S})$ consisting of objects of loewy length at most $t$. This category is closed under subobjects and quotients, hence is abelian. Now, it follows from a result of Gabriel \cite{Gabriel} that a Hom-finite, Ext-finite abelian $\K$-linear category having finitely many simple objects such that the loewy length of each object is bounded by $t$ is equivalent to the module category of a finite dimensional associative $\K$-algebra. Notice that since $t \ge 2$, the dimension of $\Ext^1(S_i, S_j)$ in ${\rm wide}(\mathcal{S})_t$ is the same as the one in ${\rm wide}(\mathcal{S})$. Since ${\rm wide}(\mathcal{S})_t$ has $t$ many simple objects with the dimension of the extensions between the simple objects given by the number of arrows between the corresponding vertices in $Q(\mathcal{S})$, we see that ${\rm wide}(\mathcal{S})_t$ is equivalent to the module category of a quotient $\K Q(\mathcal{S})/I_t$ of the path algebra $\K Q(\mathcal{S})$. Doing so for each $t$, we see that ${\rm wide}(\mathcal{S})$ is a full subcategory of the category of finite dimensional $\K Q(\mathcal{S})$-modules. Now, the only $\K Q(\mathcal{S})$-modules that we get in this subcategory are the ones annihilated by large enough paths, namely the nilpotent ones.
\end{proof}

\begin{example} \label{ExampleTame}
Consider $A = \K^2*\K^2$. In this example, we identify the $A$-modules with the category $\mathcal{C}$. It follows from Theorem \ref{mainThm} that the only $\theta$-stable balanced dimension vectors are $\delta_{ij}$ with $1 \le i,j \le 2$ together with the indivisible special isotropic dimension vector, say $d$.

Consider the one dimensional simple modules $S_1, S_2$ with respective dimension vectors $\delta_{(1,1)}$ and $\delta_{(2,2)}$. Since $\langle \delta_{(1,1)}, \delta_{(2,2)} \rangle = \langle \delta_{(2,2)}, \delta_{(1,1)} \rangle = -1$ and $\langle \delta_{(1,1)}, \delta_{(1,1)} \rangle = \langle \delta_{(2,2)}, \delta_{(2,2)} \rangle=1$, the quiver $Q(\mathcal{S})$ is simply
$$\xymatrix{S_1 \ar@/^/[r] & S_2 \ar@/^/[l]}$$
Now, the category of nilpotent representations of this quiver is formed by the finite dimensional uniserial modules of the form
$$\xymatrixrowsep{5pt}\xymatrix{S_1 \ar@{-}[d] & S_2 \ar@{-}[d]\\ S_2 \ar@{-}[d] & S_1 \ar@{-}[d] \\ S_1 \ar@{-}[d] & S_2 \ar@{-}[d] \\
\vdots & \vdots}$$
Indeed, if $M$ is a nilpotent representation, then it is annihilated by long enough paths. The quotient of $\K Q(\mathcal{S})$ by the ideal of paths of length $r$ is a Nakayama algebra, and it is well known that all modules over such an algebra are uniserial as above. A similar analysis holds if we start with the one dimensional simple modules $S_1, S_2$ with respective dimension vectors $\delta_{(1,2)}$ and $\delta_{(2,1)}$.

If $S,T$ are simple $A$-modules of distinct dimension vectors, then the only way they can have an extension is when their respective dimension vectors are $\delta_{(1,1)}, \delta_{(2,2)}$; or $\delta_{(1,2)}, \delta_{(2,1)}$. If they are of the same dimension and non-isomorphic, then they do not admit extensions. Also, if $S$ is of dimension vector $\delta_{(i,j)}$, then it has no self-extensions. For each simple $A$-module $S$ of dimension vector $d$, the category ${\rm wide}(S)$ is equivalent to the category of nilpotent representations of the one-loop quiver, or equivalently, to the graded finite dimensional modules over $\K[x]$. Hence, all indecomposable representations in $\mathcal{C}$ can be classified as follows. They are either as above for $S_1, S_2$ either of dimension vectors $\delta_{(1,1)}, \delta_{(2,2)}$ or of dimension vectors $\delta_{(1,2)}, \delta_{(2,1)}$; or of the form
$$\xymatrixrowsep{5pt}\xymatrix{S_\lambda \ar@{-}[d] \\ S_\lambda \ar@{-}[d] \\ S_\lambda \ar@{-}[d] \\ \vdots}$$
for $S_\lambda$ one of the simple $A$-module of dimension vector $d$ indexed by a $1$-parameter family. The readers familiar with representation theory of quivers might recognize that we have obtained indecomposable regular representations of the subspace quiver of type $\widetilde{\mathbb{D}}_4$. In fact, $\mathcal{C} \cong \rep(A)$ is nothing but the category of regular representations of the subspace quiver of type $\widetilde{\mathbb{D}}_4$.\qed
\end{example}

We can use this to study the representation type of $\rep(A)$ as follows. Recall Drozd's dichotomy theorem in the setting of finite dimensional associative $\K$-algebras. It asserts that such an algebra is either tame, or wild, but not both. For some classes of finite dimensional algebras, and in particular for hereditary algebras, wild is equivalent to a stronger condition, namely that of strictly wild. A finite dimensional associative $\K$-algebra $A$ is \emph{strictly wild} if there exists a fully faithful exact embedding $\varphi: {\rm mod}\K K_3 \to {\rm mod}A$, where $K_3$ is the $3$-Kronecker quiver. Such an embedding preserves indecomposability and isomorphism classes. Note that a strictly wild algebra is wild as in Drozd's dichotomy theorem. If $A$ is strictly wild, then for any finite dimensional associative $\K$-algebra $B$, there is a fully faithful exact embedding ${\rm mod}B \to {\rm mod}A$. Hence, for such an algebra $A$, there is no hope in classifying the indecomposable finite dimensional modules, since such a classification would yield a classification of all indecomposable finite dimensional modules of all finite dimensional associative $\K$-algebras! 

Based on this, we say that our (generally infinite dimensional) algebra $A$ is \emph{strictly wild} if there exists a fully faithful exact embedding $\varphi: {\rm mod}\K K_3 \to {\rm mod}A := \rep A$.

In the example above, we see that for any balanced dimension vector $d$, all but finitely many indecomposable modules are part of a $1$-parameter family of indecomposable modules. Based on the similar notion for finite dimensional algebras, we say that our algebra $A$ is of \emph{tame} representation type if for any dimension (vector) $d$, all but finitely many indecomposable modules are in finitely many $1$-parameter families of indecomposable modules. Here, we do not go in the details of what is meant by a one-parameter family, because of the following theorem, which says that the only example of tame representation type is as in the previous example.

\begin{theorem} Let $A$ be a free product of finitely many non-trivial semisimple associative $\K$-algebras. 
\begin{enumerate}
    \item If $A$ is not isomorphic to $\K^2 * \K^2$, then $\rep(A)$ is of strictly wild representation type and has unbounded simples.
    \item If $A = \K^2 * \K^2$, then $\rep(A)$ is of tame representation type. The simple modules are $1$ or $2$ dimensional, and indecomposable modules can be classified as in Example \ref{ExampleTame}.
\end{enumerate}
\end{theorem}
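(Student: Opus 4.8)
The plan is to reduce the question to the combinatorics of the generalized subspace quiver $\Gamma=\Gamma_A$ and then invoke standard facts about tame versus wild hereditary categories. First I would observe that, via the equivalence $F:\rep(A)\xrightarrow{\sim}\mathcal{C}$ and the fact that $\mathcal{C}$ is an extension-closed abelian subcategory of $\rep(\Gamma)$, the representation type of $\rep(A)$ is controlled by $\rep(\Gamma)$ together with the balanced constraint. Since every balanced dimension vector is $\theta$-semistable and all $\theta$-stable balanced dimension vectors are known (Theorem \ref{mainThm}), the wildness of $\rep(A)$ will follow as soon as I can produce, inside $\mathcal{C}$, a family of bricks whose Ext-quiver $Q(\mathcal{S})$ contains $K_3$ as a subquiver, since then the category of nilpotent representations of $Q(\mathcal{S})$ — a full subcategory of $\rep(A)$ by the previous proposition — already contains a copy of $\modu \K K_3$, and hence $\rep(A)$ is strictly wild. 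So the core of the proof of (1) is: unless $A\cong\K^2*\K^2$, one can choose finitely many balanced $\theta$-stable dimension vectors $\delta_1,\dots,\delta_r$ with $-\langle\delta_i,\delta_j\rangle$ large enough (or enough loops, i.e.\ some $\delta_i$ imaginary) that $Q(\mathcal{S})$ has $K_3$ as a full subquiver, and unboundedness of the simples will be read off directly from the list of $\theta$-stable balanced dimension vectors.

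Concretely, I would split into cases according to the shape of the weights. If some cluster has a non-trivial weight $w\ge 2$, or if some cluster has $r_i\ge 3$ vertices, or if $m\ge 3$, I would exhibit an explicit balanced imaginary Schur root $\delta$ with $\langle\delta,\delta\rangle$ very negative: scaling such a $\delta$ gives, for each $t$, a simple module of dimension $tn$, proving unbounded simples, and a single simple $S$ with $\dim_\K\Ext^1(S,S)=1-\langle\delta,\delta\rangle\ge 3$ already gives a full subcategory equivalent to the nilpotent representations of a quiver with $\ge 3$ loops, which is strictly wild. The remaining case is $m=2$, all weights trivial, with cluster sizes $(r_1,r_2)$; here $\Gamma$ is a classical bipartite (double-star) subspace quiver. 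When $(r_1,r_2)=(2,2)$ we are in the excluded case $\K^2*\K^2$ (underlying graph $\widetilde{\mathbb{D}}_4$, tame). As soon as $(r_1,r_2)\ne(2,2)$ — say $r_1\ge 3$, so the underlying graph properly contains $\widetilde{\mathbb{D}}_4$, hence is neither Dynkin nor extended Dynkin — I would use Proposition \ref{PropertiesExtremalVectors} and the explicit formula \eqref{eq:d-delta_u} to pick three pairwise-compatible balanced $\theta$-stable roots $\delta_1,\delta_2,\delta_3$ (e.g.\ two real ones pairing to $-1$ plus their sum, a special-isotropic-type vector, suitably perturbed, or a genuinely imaginary balanced root once $r_1\ge 3$) with $-\langle\delta_i,\delta_j\rangle$ summing to at least $3$ around a triangle, and again produce $K_3$ inside $Q(\mathcal{S})$; unboundedness again comes from scaling an imaginary balanced root, which exists whenever the underlying graph is wild. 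The computation that some balanced $\theta$-stable imaginary root exists in every non-$\widetilde{\mathbb{D}}_4$ case is exactly the content of Theorem \ref{mainThm}(1)(c).

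For part (2), when $A=\K^2*\K^2$, I would simply refer to Example \ref{ExampleTame}: there $\mathcal{C}\cong\rep(A)$ is identified with the category of regular representations of the $\widetilde{\mathbb{D}}_4$ subspace quiver, the simple objects are the four $\delta_{(i,j)}$ (one-dimensional) together with the one-parameter family of two-dimensional simples with special isotropic dimension vector $d$, and the indecomposables are the uniserial modules over the Nakayama quotients of $\K Q(\mathcal{S})$ plus the homogeneous tubes over the $S_\lambda$, exactly as displayed there. This is the standard description of regular representations of a tame quiver, so tameness is immediate and the classification of indecomposables is as stated.

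The main obstacle is the last case of part (1): producing, uniformly over all $(r_1,r_2)\ne(2,2)$ with trivial weights and $m=2$, an explicit triple of \emph{balanced} $\theta$-stable dimension vectors whose mutual Euler pairings are negative enough to embed $K_3$. The balanced constraint is a real restriction — not every Schur root of $\Gamma$ is allowed — so one cannot just cite "the underlying graph is wild"; one must verify via Theorem \ref{mainThm} that the chosen vectors are $\theta$-stable. I expect this to come down to a short but slightly fiddly case analysis (e.g.\ $r_1=3,r_2=2$ versus $r_1,r_2\ge 3$), using \eqref{eq:d-delta_u} and Proposition \ref{PropertiesExtremalVectors} to bound the maxima $\sum_i\max_j d_{ij}/w_{ij}$ against $(m-1)n=n$, together with the observation that once a single balanced imaginary Schur root is available, strict wildness and unbounded simples both follow formally.
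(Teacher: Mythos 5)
Your overall strategy---pass to $\mathcal{C}$, produce balanced imaginary Schur roots, and embed ${\rm mod}\K K_3$ via the Ext-quiver $Q(\mathcal{S})$ of a finite set of simples together with the equivalence of ${\rm wide}(\mathcal{S})$ with nilpotent representations---is exactly the paper's strategy, and your treatment of part (2) (deferring to Example \ref{ExampleTame}) matches the paper. However, the two devices you propose for actually realizing $K_3$ in part (1) have genuine problems. First, a single simple $S$ with $\dim\Ext^1(S,S)\ge 3$ gives ${\rm wide}(\{S\})$ equivalent to nilpotent representations of a one-vertex quiver with at least $3$ loops, and your assertion that this category is strictly wild (i.e.\ admits a fully faithful exact copy of ${\rm mod}\K K_3$) is not obvious: the naive block functor sending $(V_1,V_2;f_1,f_2,f_3)$ to $V_1\oplus V_2$ with three square-zero endomorphisms is not full (it acquires extra morphisms $V_1\to V_2'$ killed by all the $f_i$), so you would be assuming a fact essentially as strong as the one being proved. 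Second, the ``triangle'' of three roots with pairings ``summing to at least $3$'' is not a sufficient criterion: an oriented cycle with single arrows has only uniserial nilpotent representations, so such a configuration need not be wild at all. What is actually needed is three \emph{parallel} arrows between one ordered pair of distinct vertices of $Q(\mathcal{S})$, because then extending a $K_3$-representation by zero on all remaining arrows and loops is visibly a fully faithful exact embedding of ${\rm mod}\K K_3$ into the nilpotent representations.

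The paper obtains this uniformly and without your case split. In every case other than $\K^2*\K^2$ there is a single balanced imaginary Schur root $d$: from Proposition \ref{PropertiesExtremalVectors}.(3) when two clusters carry non-trivial weights, and otherwise from a balanced $\theta$-stable vector not on an extremal ray (e.g.\ $(4;2;2,2)$ for ${\rm Mat}_2(\K)*\K^2$, or $(6;3,3;2,2,2)$ for $\K^2*\K^3$, both checked against Theorem \ref{mainThm}(1)(c)). Replacing $d$ by a multiple $td$ with $\langle td,td\rangle\le -3$ keeps it a balanced imaginary $\theta$-stable Schur root, and since the moduli space in dimension vector $td$ has dimension $1-\langle td,td\rangle>0$ one may choose two \emph{non-isomorphic} $\theta$-stable representations $S_1,S_2$ of the \emph{same} dimension vector $td$; then $Q(\{S_1,S_2\})$ has $-\langle td,td\rangle\ge 3$ arrows from $S_1$ to $S_2$, and the $K_3$-embedding is immediate. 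The same scaling gives simples of unbounded dimension, as you note. I would replace both of your mechanisms by this single one; the rest of your outline then goes through.
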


\begin{proof}
If there are at least two non-trivial weights in distinct clusters, then we have a balanced imaginary Schur root $d$ of $\Gamma$. In all other cases, except for $\K^2 * \K^2$, we can built a balanced $\theta$-stable dimension vector $d$ that is not on an extremal ray of $\mathbb{Q}_+\Sigma(\Gamma, \theta)$. This dimension vector is necessarily imaginary.

Any integral multiple of $d$ is again a balanced imaginary Schur root. For $m$ large enough, $\langle md, md \rangle \le -3$. If we take $2$ non-isomorphic $\theta$-stable representations $S_1, S_2$ of $\mathcal{C}$ of dimension vector $md$, then the quiver $Q(\{S_1, S_2\})$ has at least $3$ arrows from $S_1$ to $S_2$ and at least $3$ arrows from $S_2$ to $S_1$ (and at least $2$ loops at each vertex). Hence ${\rm mod}\K K_3$ is a full subcategory of ${\rm wide}(\{S_1, S_2\})$, hence can be seen as a full subcategory of $\rep(A)$. 

The case of $\K^2 * \K^2$ was treated in Example \ref{ExampleTame}.
\end{proof}

\section{Number of parameters}

An interesting consequence of the previous analysis is that to each $\theta$-stable dimension vector $d$ of $\Gamma$, following King \cite{Ki}, we can associate a projective variety $\mathcal{M}^s(\Gamma,d)$ which parametrizes the isomorphism classes of the $\theta$-stable representations of $\Gamma$ of dimension vector $d$. When non-empty, this variety has dimension $1 - \langle d, d \rangle$. When $d$ is balanced, since $\mathcal{C}_d$ is an open set of the affine space $\rep(\Gamma,d)$, this implies that $1 - \langle d, d \rangle$ equals the number of parameters for the isomorphism classes of simple representations in $\rep(A,d)$.  Thus we have the following.

\begin{proposition}
If $d$ is a balanced dimension vector and is $\theta$-stable, then the number of parameters for the isomorphism classes of simple representations in $\rep(A,d)$ is $1 - \langle d, d \rangle$.
\end{proposition}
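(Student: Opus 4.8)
The plan is to transfer the parameter count from the moduli space of quiver representations to the representation space of $A$ via the equivalence $F:\rep(A)\to\mathcal{C}$. First I would recall from King \cite{Ki} that for a $\theta$-stable dimension vector $d$ the GIT quotient $\mathcal{M}^s(\Gamma,d)$ is a (quasi-projective, in fact projective under our hypotheses) variety of dimension $1-\langle d,d\rangle$ whenever it is nonempty, and that it is a geometric quotient of the open subset $\rep(\Gamma,d)^s$ of $\theta$-stable points by the action of $\mathrm{PGL}_d(\K)=\mathrm{GL}_d(\K)/\K^*$, the scalars acting trivially. The key point is that the fibers of the quotient map $\rep(\Gamma,d)^s\to\mathcal{M}^s(\Gamma,d)$ are exactly the $\mathrm{PGL}_d(\K)$-orbits, i.e.\ the isomorphism classes of $\theta$-stable representations, and all these orbits have the same dimension $\dim\mathrm{PGL}_d(\K)=\dim\mathrm{GL}_d(\K)-1$.

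Next I would use the fact, established earlier in the paper, that when $d$ is balanced the set $\mathcal{C}_d=\mathcal{C}\cap\rep(\Gamma,d)$ is a nonempty Zariski-open subset of the affine space $\rep(\Gamma,d)$, and that $\rep(A,d):=G(\mathcal{C}_d)$ is identified with $\mathcal{C}_d$ as a $\mathrm{GL}_d(\K)$-variety. By Theorem \ref{mainThm}(3), when $d$ is $\theta$-stable the locus of $\theta$-stable (equivalently, by Proposition \ref{Prop: simple to stable}, simple) representations is a nonempty open subset of $\rep(\Gamma,d)$, hence also a nonempty open subset $\mathcal{C}_d^s$ of $\mathcal{C}_d$. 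Since this open set is irreducible of dimension $\dim\rep(\Gamma,d)$ and the orbits in it are $\mathrm{GL}_d(\K)$-orbits of dimension $\dim\mathrm{GL}_d(\K)-1$, the number of parameters for the isomorphism classes in $\mathcal{C}_d^s$ — i.e.\ the dimension of the quotient — equals $\dim\rep(\Gamma,d)-(\dim\mathrm{GL}_d(\K)-1)$. A direct computation shows this equals $1-\langle d,d\rangle$: indeed $\dim\rep(\Gamma,d)=\sum_{\alpha:u\to v}d_ud_v$ and $\dim\mathrm{GL}_d(\K)=\sum_{v}d_v^2$, so $\dim\rep(\Gamma,d)-\dim\mathrm{GL}_d(\K)+1 = 1-\big(\sum_v d_v^2-\sum_{\alpha:u\to v}d_ud_v\big)=1-\langle d,d\rangle$.

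Alternatively, and more cleanly, one simply invokes King's result directly: $\mathcal{M}^s(\Gamma,d)$ parametrizes precisely the isomorphism classes of $\theta$-stable representations of dimension $d$, it is nonempty (since $d$ is $\theta$-stable), and it has dimension $1-\langle d,d\rangle$; and because $\mathcal{C}_d$ is an open subset of $\rep(\Gamma,d)$ containing a $\theta$-stable point, every point of $\mathcal{M}^s(\Gamma,d)$ in fact comes from a representation in $\mathcal{C}_d$, so $\mathcal{M}^s(\Gamma,d)$ also parametrizes the isomorphism classes of simple objects of $\mathcal{C}$ of dimension $d$, which under $G$ are the simple $A$-modules in $\rep(A,d)$. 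Hence the number of parameters is $1-\langle d,d\rangle$.

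The main obstacle is a bookkeeping one: one must be careful that "number of parameters for the isomorphism classes in $\rep(A,d)$" is interpreted consistently with the dimension of the moduli space, i.e.\ that passing from $\rep(\Gamma,d)$ to the open subvariety $\mathcal{C}_d$ does not change the generic orbit dimension or the dimension of the quotient — this is immediate since $\mathcal{C}_d$ is open and $\mathrm{GL}_d(\K)$-stable and contains a $\theta$-stable point, so the $\theta$-stable loci of $\mathcal{C}_d$ and $\rep(\Gamma,d)$ coincide. No genuinely hard step remains once Theorem \ref{mainThm}(3) and King's moduli construction are in hand.
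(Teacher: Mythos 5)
Your proposal is correct and follows essentially the same route as the paper: the paper likewise invokes King's moduli space $\mathcal{M}^s(\Gamma,d)$ of dimension $1-\langle d,d\rangle$ and uses that $\mathcal{C}_d$ is open in $\rep(\Gamma,d)$ to identify the simple $A$-modules in $\rep(A,d)$ with the $\theta$-stable points it parametrizes. Your explicit orbit-dimension computation $\dim\rep(\Gamma,d)-\dim{\rm GL}_d(\K)+1=1-\langle d,d\rangle$ is a harmless additional verification that the paper leaves implicit.
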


When all weights are trivial, we can easily deduce the number of parameters for the simple representations of $\rep(A,n)$. Let $n \ge 1$ and $r_1, \ldots, r_m$ be given, and assume that $w_{ij}=1$ for all $i,j$.  We define $d_{n, \rm min}$ to be a balanced dimension vector such that $d_0 = n$ and for each $i$, $|d_{ij}-d_{ik}| \le 1$ for all $j,k$. This dimension vector is unique up to ordering of the entries of $d$ within each cluster. 

\begin{corollary}
Assume that all $A_i$ have trivial weights. For $n \ge 1$ such that $\rep(A,n)$ has simple representations, the number of parameters of simple representations in $\rep(A,n)$ is $$1 - \langle d_{n,\rm min}, d_{n,\rm min} \rangle=1 + (m-1)n^2 - \sum_{i,j}d_{ij}^2$$
where $d_{ij}$ is the dimension at $v_{ij}$ of $d_{n,\rm min}$.
\end{corollary}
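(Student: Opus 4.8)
The plan is to combine the preceding Proposition with the classification of balanced $\theta$-stable dimension vectors in Theorem \ref{mainThm}, reducing the statement to an optimization over balanced dimension vectors. Every simple $A$-module of $\K$-dimension $n$ lies in exactly one of the spaces $\rep(A,d)$ with $d$ balanced and $d_0=n$, and there are only finitely many such $d$; hence by the preceding Proposition the number of parameters of simple representations in $\rep(A,n)$ equals $\max\{\,1-\langle d,d\rangle : d \text{ balanced and }\theta\text{-stable},\ d_0=n\,\}$. So I would aim to prove two things: that $d_{n,\mathrm{min}}$ minimizes $\langle d,d\rangle$ among \emph{all} balanced $d$ with $d_0=n$, and that $d_{n,\mathrm{min}}$ is itself $\theta$-stable whenever $\rep(A,n)$ has a simple representation.

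For the minimization I would first record the identity $\langle d,d\rangle=(1-m)n^2+\sum_{i,j}d_{ij}^2$ for a balanced $d$ with all weights trivial and $d_0=n$; this is immediate from the definition of the Euler form on $\Gamma$ (there is exactly one arrow $v_{ij}\to v_0$ for each $(i,j)$) together with the balancedness relation $\sum_j d_{ij}=n$. Thus minimizing $\langle d,d\rangle$ amounts to minimizing $\sum_{i,j}d_{ij}^2$, a problem that separates over the clusters, and for each cluster a one-line smoothing argument (replacing a pair $(d_{ij},d_{ik})$ with $d_{ij}\ge d_{ik}+2$ by $(d_{ij}-1,d_{ik}+1)$ strictly decreases the sum of squares) shows the minimum is attained exactly when $|d_{ij}-d_{ik}|\le 1$ for all $j,k$, that is, at $d=d_{n,\mathrm{min}}$. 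Substituting back gives $1-\langle d_{n,\mathrm{min}},d_{n,\mathrm{min}}\rangle=1+(m-1)n^2-\sum_{i,j}d_{ij}^2$, the asserted value, so it remains only to show that $d_{n,\mathrm{min}}$ is $\theta$-stable.

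For $\theta$-stability I would use Theorem \ref{mainThm}(1) and split into cases. When $n=1$, the vector $d_{1,\mathrm{min}}$ is one of the extremal vectors $\delta_u$, which is a real Schur root (all weights being trivial, by Proposition \ref{PropertiesExtremalVectors}(4)) and minimal on its ray, hence $\theta$-stable by part (1)(a). When $n\ge 2$ and $A\cong\K^2*\K^2$, Example \ref{ExampleTame} shows that the only balanced $\theta$-stable dimension vectors have $d_0\in\{1,2\}$, so the hypothesis forces $n=2$, and $d_{2,\mathrm{min}}$ (the balanced vector with $d_0=2$ and all $d_{ij}=1$) is the indivisible special isotropic vector, which is $\theta$-stable by part (1)(b).

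When $n\ge 2$ and $A\not\cong\K^2*\K^2$, I would first check from the definition of a special isotropic vector and the absence of trivial factors that $d_{n,\mathrm{min}}$ cannot be special isotropic: a special isotropic $d_{n,\mathrm{min}}$ would need each of its clusters to have entries differing pairwise by at most $1$, forcing the clusters carrying the two entries $n/2$ to have exactly two vertices and every other cluster to have a single vertex with entry $n$, i.e.\ $r_i=1$ and $A_i$ trivial, so that $A\cong\K^2*\K^2$. Now take any balanced $\theta$-stable $d$ with $d_0=n\ge 2$; since $d_0>1$ it is not of type (1)(a), and it is not of type (1)(b) (which would force $A\cong\K^2*\K^2$), so it is of type (1)(c), which with all weights trivial reads $\sum_i\max_j d_{ij}\le(m-1)n$. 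The pigeonhole bound $\max_j d_{ij}\ge\lceil n/r_i\rceil=\max_j(d_{n,\mathrm{min}})_{ij}$, valid for every balanced $d$ with $d_0=n$, then gives $\sum_i\max_j(d_{n,\mathrm{min}})_{ij}\le(m-1)n$, so $d_{n,\mathrm{min}}$ also satisfies the inequality of (1)(c); being not special isotropic, it is $\theta$-stable by part (1)(c). This settles all cases, and the formula follows. I expect the only genuinely delicate point to be ruling out a divisible special isotropic $d_{n,\mathrm{min}}$ — which is precisely what forces the separate treatment of $A\cong\K^2*\K^2$ via Example \ref{ExampleTame} — while the identity for $\langle d,d\rangle$, the smoothing argument, and the pigeonhole estimate are all routine.
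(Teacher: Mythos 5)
Your overall strategy coincides with the paper's own proof, which consists precisely of the observation that minimizing $\langle d,d\rangle$ over balanced $d$ with $d_0=n$ amounts to minimizing $\sum_{i,j}d_{ij}^2$ cluster by cluster, that this minimum is attained at $d_{n,\rm min}$, and that the stated formula then follows by direct computation. Your identity $\langle d,d\rangle=(1-m)n^2+\sum_{i,j}d_{ij}^2$ and the smoothing argument make this precise, and you go further than the paper by also verifying that $d_{n,\rm min}$ is itself $\theta$-stable and by justifying the reduction to a maximum over the finitely many balanced $\theta$-stable $d$ with $d_0=n$; the paper leaves both points implicit. That extra care is welcome, but one step of it is incorrect.

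In your third case ($n\ge 2$, $A\not\cong\K^2*\K^2$) you assert that $d_{n,\rm min}$ cannot be special isotropic, and likewise that a balanced $\theta$-stable $d$ with $d_0=n\ge 2$ cannot fall under Theorem \ref{mainThm}(1)(b), on the grounds that either would force $A\cong\K^2*\K^2$. This fails when $n=2$: for $A=\K^3*\K^3$ the vector with $d_0=2$ and cluster entries $(1,1,0)$, $(1,1,0)$ \emph{is} $d_{2,\rm min}$ (entries within each cluster differ by at most $1$) and \emph{is} special isotropic. Your argument that a cluster carrying the two entries $n/2$ must have exactly two vertices rests on $n/2-0>1$, which requires $n>2$. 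The conclusion nevertheless survives, and the repair is easy: a special isotropic vector with $d_0=2$ is automatically indivisible, hence $\theta$-stable by (1)(b), so the case split should be on whether $d_{n,\rm min}$ is special isotropic (which your argument correctly shows can happen with $A\not\cong\K^2*\K^2$ only when $n=2$) rather than on whether $A\cong\K^2*\K^2$. One must also note, for the pigeonhole step, that the witnessing $\theta$-stable $d$ of dimension $n$ might itself be of type (1)(b) rather than (1)(c); but a special isotropic $d$ satisfies $\sum_i\max_j d_{ij}=n/2+n/2+(m-2)n=(m-1)n$, so the numerical inequality still holds for it and hence, by your pigeonhole bound, for $d_{n,\rm min}$. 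With these adjustments your proof is complete and is a correct, fully detailed version of the paper's one-line argument.
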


\begin{proof}
In order to minimize $\langle d, d\rangle$ for $d$ balanced such that $d_0=n$, one just needs to take $d$ with $d_0 = n$ such that for each cluster $i$, the sum $\sum_{j=1}^{r_i}d_{ij}^2$ is maximal. This is attained for $d = d_{n,\rm min}$. Now, the claimed equality
follows from direct computations.
\end{proof}

\begin{example}
Let $m=2$, $r_1=2$ and $r_2 = 3$ with all weights trivial. Here, $A$ is the free product of $\K^2$ with $\K^3$. Consider $n=8$. Here, $d_{8, \rm min}= (8, 4, 4, 3, 3, 2)$. The value of $1 + (m-1)n^2 - \sum_{i,j}d_{ij}^2$ is $11$. Hence, the $8$-dimensional simple $A$-modules, up to isomorphism, can be parametrized using at most $11$ parameters.
\end{example}

\section{Free products of finite groups}

In this section, we let $G_1, \ldots, G_m$ be finite groups and we assume that the characteristic of $\K$ does not divide the order $|G_i|$ of $G_i$ for all $i$. In this case, it follows from Maschke's theorem that all $A_i:= \K G_i$ are semisimple. Therefore, our previous analysis applies for
$$A:= (\K G_1) * \cdots * (\K G_m) \cong \K G$$
where $G$ is the free product $G_1 * \cdots * G_m$. Of particular interest among free products of finite groups are the infinite dihedral group $\mathbb{Z}_2 * \mathbb{Z}_2$ and the modular group $\mathbb{Z}_2 * \mathbb{Z}_3\cong {\rm PSL}_2(\mathbb{Z})$. These two groups are free products of cyclic groups. We will look at cyclic groups in more details in the next subsection.

Now, let $M$ be an $n$-dimensional left $A$-module given by the vector space $V$. While it may not be straightforward to compute $F(M)$, if one is not explicitly given the matrix form of each $\K G_i$, Serre in \cite[Section 2.6]{Se} gives a way to compute the canonical decomposition of a representation $M: G_i \to  {\rm GL}_\K(V)$ as
$$V = V_{i1} \oplus \cdots \oplus V_{ir_i}$$
which is such that $V_{ij}$ is the maximal direct summand of $V$ isomorphic to a direct sum of copies of the $j$-th simple $\K G_i$-module. These modules $V_{ij}$ are unique and called \emph{isotypic} components of the $\K G_i$-representation $M$. When one has an internal decomposition $\K G_i = A_{i1} \times \cdots \times A_{ir_i}$ in matrix factors, the subspace $V_{ij}$ is nothing but $E_{ij}V$, which is the projection of $V$ onto the $j$-th factor $A_{ij}$.  Finding the summands $V_{ij}$ only requires knowing the (irreducible) characters of $G_i$.

\begin{example}
Consider $m=2$ with $G_1$ the symmetric group of order $6$ and $G_2$ the cyclic group of order $2$. We assume that the characteristic of $\K$ is not in $\{2,3\}$. We have that $\K G_1 \cong \K \times \K \times {\rm Mat}_{2\times 2}(\K)$ and $\K G_2 \cong \K \times \K$. Hence, $r_1 = 3$ with $w_{11} = w_{12} = 1$ and $w_{13}=2$ and $r_2 = 2$ with $w_{21} = w_{22}=1$. Let $M$ be an $n$-dimensional left $A$-module given by the vector space $V$. For each $i$, we are given a group representation
$$\rho_i: G_i \to {\rm GL}_\K(V)$$
representing $M$ as an $\K G_i$-module.

Let $S_{11}, S_{12}, S_{13}$ be the non-isomorphic simple $\K G_1$-modules where $S_{13}$ is $2$-dimensional, and let $S_{21}, S_{22}$ be the non-isomorphic simple $\K G_2$-modules. Suppose for instance that $M$ is $6$-dimensional and is such that, as a $\K G_1$-module, we have
$$M \cong S_{11}^3 \oplus S_{12} \oplus S_{13}$$
while as a $\K G_2$-module, we have
$$M \cong S_{21}^2 \oplus S_{22}^4.$$
Then the quiver $\Gamma$ with the dimension vector of $F(M)$ are represented as follows:
$$\xymatrix{& & 6 & & \\ 3 \ar[urr] & 1 \ar[ur] & 1 \ar@/^0.3pc/[u] \ar@/_0.3pc/[u] & 2 \ar[lu] & 4 \ar[llu]}$$
Note that $F(M)_{13}$ is $2$-dimensional, and hence, the corresponding dimension at vertex $v_{13}$ is $2/w_{13}=1$. Since ${\rm max}(3,1,1/2) = 3$ and ${\rm max}(2,4)=4$ with the sum $3+4 > (m-1)n = 6$, we see that $M$ cannot be simple. On the other hand, let  $N$ be also $6$-dimensional but such that, as a $\K G_1$-module, we have
$$N \cong S_{11}^2 \oplus S_{12}^2 \oplus S_{13}$$
while as a $\K G_2$-module, we have
$$N \cong S_{21}^2 \oplus S_{22}^4.$$
Now, we have that ${\rm max}(2,2,1/2) = 2$ and ${\rm max}(2,4)=4$ with the sum $2+4 \le  6$. We cannot tell whether $N$ is simple or not. However, if $N$ is in general position satisfying the above two decompositions, then it will be simple.  
\end{example}

For the remaining of this section, we focus our attention to free products of finite cyclic groups.
The group algebra of a finite cyclic group enjoys the additional property of being basic, thus has only trivial weights. There is a convenient way to present the corresponding algebra $A$. For each $i$, let $a_i$ be a generator of $G_i$ of order $r_i$ - here $r_i = |G_i|$ is the number of simple factors in $\K G_i$. We let $Q$ be the quiver with a single vertex $1$ and with $m$ loops $a_1, \ldots, a_m$. The algebra $A$ is nothing but the path algebra $\K Q$ of $Q$ modulo the ideal generated by the relations $a_i^{r_i}-1$ for all $i=1, \ldots, m$. Hence, an $n$-dimensional left $A$-module $M$ is given by an $n$-dimensional vector space $V$ together with $m$ endomorphisms $M(a_i):=M_i$ satisfying $M_i^{r_i} = I_n$ for all $i$, where $I_n$ is the identity $n \times n$ matrix. 

In this case, there is an alternative description of the functor $F$. Indeed, for each $i$, we let $\rho_{i1}, \ldots, \rho_{ir_i}$ denote the $r_i$-th roots of unity. These roots are in one-to-one correspondence with the non-isomorphic (one-dimensional) simple $\K G_i$-modules - say root $\rho_{ij}$ corresponds to simple module $S_{ij}$. Now, for each $i$, $V$ can be decomposed as a direct sum of its eigenspaces with respect to the endomorphism $M_i$:
$$V = F_{i1} \oplus \cdots \oplus F_{ir_i},$$
where we allow zero dimensional eigenspaces. This decomposition agrees with Serre's canonical decomposition above. In other words, the isotypic components here are just the eigenspaces. Each of the $F_{ij}$ is a $\K G_i$-modules that is a direct sum of ${\rm dim}F_{ij}$ copies of $S_{ij}$. Therefore, we see that the functor $F$ is such that $F(M)_{ij}=F_{ij}$ and $F(M)(\alpha_{ij}^1)$ is simply the inclusion of $F_{ij}$ into $V$. Our main theorem can be interpreted as follows. 

In this theorem, for an $n$-dimensional representation $M$ of $G$, we put $d(M)_{ij}$ to be the dimension of the $\rho_{ij}$-eigenspace of the endomorphism $M(a_i)=M_i$. Equivalently, these numbers define a dimension vector $d = d(M)$ of $\Gamma$ that is nothing but the dimension vector of $F(M)$.

\begin{theorem} \label{MainTheoCyclic}Let $G$ be a free product of the finite cyclic groups $G_1, \ldots, G_m$ with $G_i$ of order $r_i$. Assume that for every $i$, the  characteristic of $\K$ does not divide $r_i$. Let $n$ be a positive integer and consider $M$ and $d$ as in the previous paragraph.

\begin{enumerate}
\item The dimension vector $d$ is $\theta$-stable if and only if either 
\begin{enumerate}
    \item $n=1$ or
    \item $d$ is special isotropic indivisible (so $n=2$ and precisely $4$ entries in $d$ are equal to $1$), or
    \item $d$ is not special isotropic and 
$$\sum_{i=1}^m{\rm max}(d_{i1}, \ldots, d_{i r_i}) \le (m-1)n.$$
\end{enumerate}
    \item If $M$ is simple, then $d$ is $\theta$-stable, hence satisfies (1).
    \item 
   If $d$ is $\theta$-stable, then the space of $n$-dimensional representations $M$ with $F(M)$ having dimension vector $d$ has a Zariski-dense and open subset of simple representations.
\end{enumerate}
\end{theorem}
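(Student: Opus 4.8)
The plan is to obtain this theorem as a direct specialization of Theorem~\ref{mainThm} to the case $A = \K G_1 * \cdots * \K G_m$ with each $G_i$ cyclic. First I would record that, since $\K$ is algebraically closed of characteristic not dividing $r_i = |G_i|$, we have $\K G_i \cong \K[x]/(x^{r_i}-1) \cong \K^{r_i}$, so $\K G_i$ is basic and \emph{all} weights $w_{ij}$ equal $1$; in particular $\Gamma$ is an ordinary generalized subspace quiver with single arrows. I would also invoke the discussion preceding the statement: under the eigenspace description of $F$, the dimension vector $d = d(M)$ whose $(i,j)$ entry is $\dim_\K F_{ij}$ (the $\rho_{ij}$-eigenspace of $M(a_i)$) coincides with $\dim F(M)$, since the isotypic components are precisely the eigenspaces. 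This identification is the only translation needed to pass between the group-theoretic data and the quiver data.

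For part~(1), I would substitute $w_{ij}=1$ into the three conditions of Theorem~\ref{mainThm}(1). Conditions (1)(b) and (1)(c) become verbatim the conditions (1)(b) and (1)(c) of the present statement: for trivial weights the clause ``with trivial corresponding weights'' in the definition of special isotropic is automatic, ``indivisible'' forces $n=2$ with exactly four nonzero proper entries, each equal to $n/2 = 1$, and the max-sum inequality drops the denominators $w_{i j}$. The case requiring a short argument is (1)(a). Here I would note that with no non-trivial weights every extremal vector $\delta_u$ satisfies $\langle \delta_u, \delta_u\rangle = m - (m-1) = 1$ (so $\delta_u$ is a real Schur root, cf.\ Proposition~\ref{PropertiesExtremalVectors}) and is already its own minimal integral vector on its ray; hence ``$d$ minimal on a real extremal ray'' is equivalent to ``$d = \delta_u$ for some $u \in U$''. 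Finally, a balanced $d$ with $d_0 = n$ satisfies $\sum_{j} d_{ij} = n$ in each cluster, so $d = \delta_u$ for some $u$ holds exactly when $n = 1$, which establishes (1)(a).

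Parts~(2) and~(3) need essentially no new argument. Part~(2) is Theorem~\ref{mainThm}(2): equivalently, $M$ is simple if and only if $F(M)$ is $\theta$-stable by Proposition~\ref{Prop: simple to stable}, and a $\theta$-stable representation has a $\theta$-stable dimension vector. Part~(3) is Theorem~\ref{mainThm}(3): for a $\theta$-stable $d$, King's theorem gives a non-empty Zariski-open (hence dense) set of $\theta$-stable representations in the irreducible affine space $\rep(\Gamma,d)$; intersecting with the non-empty open set $\mathcal{C}_d$ (non-empty since $d$ is balanced) and transporting through the equivalence $G:\mathcal{C}\to\rep(A)$ yields a dense open subset of simple modules in the space of $n$-dimensional representations $M$ with $F(M)$ of dimension vector $d$.

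There is no genuine obstacle; the mathematical content lies entirely in Theorem~\ref{mainThm} and in the identification of the eigenspace data of $M$ with the representation $F(M)$. If anything, the step deserving the most care is the case analysis for (1)(a): verifying that, once all weights are trivial, the ``real extremal ray'' alternative of Theorem~\ref{mainThm} collapses precisely to $n = 1$ and to nothing more.
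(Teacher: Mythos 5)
Your proposal is correct and follows the same route as the paper: the paper's proof is exactly the one-line observation that with all weights trivial every $\delta_u$ is real (and minimal on its ray), so condition (1)(a) of Theorem~\ref{mainThm} collapses to $d=\delta_u$ for some $u$, i.e.\ $n=1$, with parts (2) and (3) inherited verbatim. Your write-up simply spells out these specializations in more detail than the paper does.
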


\begin{proof}
We apply Theorem \ref{mainThm} by noticing that all $\delta_u$ are real and that $d$ is equal to some $\delta_u$ if and only if $n=1$.
\end{proof}

\end{document}